\newtheorem{assumption}{Assumption}
\newtheorem{remark}{Remark}
\newtheorem{lemma}{Lemma}
\newtheorem{proposition}{Proposition}
\newtheorem{theorem}{Theorem}
\newtheorem{corollary}{Corollary}
\newtheorem{definition}{Definition}
\newcommand{\N}{{\mathbb N}}
\newcommand{\Z}{{\mathbb Z}}
\newcommand{\R}{{\mathbb R}}
\newcommand{\C}{{\mathbb C}}
\newcommand{\A}{{\mathbb A}}
\newcommand{\B}{{\mathbb B}}
\newcommand{\E}{{\mathbb E}}
\newcommand{\F}{{\mathbb F}}
\newcommand{\M}{{\mathbb M}}
\newcommand{\D}{{\mathbb D}}
\newcommand{\U}{{\mathcal U}}
\newcommand{\Ubar}{\overline{\mathcal U}}
\newcommand{\cercle}{{\mathbb S}^1}
\newcommand{\eps}{\varepsilon}
\newcommand{\dps}{\displaystyle}
\begin{document}

\title{Fully discrete hyperbolic initial boundary value problems\\
with nonzero initial data}

\author{Jean-Fran\c{c}ois {\sc Coulombel}\thanks{CNRS and Universit\'e de Nantes, Laboratoire de Math\'ematiques 
Jean Leray (UMR CNRS 6629), 2 rue de la Houssini\`ere, BP 92208, 44322 Nantes Cedex 3, France. Email: 
{\tt jean-francois.coulombel@univ-nantes.fr}. Research of the author was supported by ANR project BoND, 
ANR-13-BS01-0009-01.}\\
$ $\\
{\it This article is dedicated to Denis Serre.}}
\date{\today}
\maketitle

\begin{abstract}
The stability theory for hyperbolic initial boundary value problems relies most of the time on the Laplace 
transform with respect to the time variable. For technical reasons, this usually restricts the validity of 
stability estimates to the case of zero initial data. In this article, we consider the class of {\it non-glancing} 
finite difference approximations to the hyperbolic operator. We show that the maximal stability estimates 
that are known for zero initial data and nonzero boundary source term extend to the case of nonzero 
initial data in $\ell^2$. The main novelty of our approach is to cover finite difference schemes with 
an arbitrary number of time levels. As an easy corollary of our main trace estimate, we recover former 
stability results in the semigroup sense by Kreiss \cite{kreiss1} and Osher \cite{osher1}.
\end{abstract}
\bigskip

\noindent {\small {\bf AMS classification:} 65M12, 65M06, 35L50.}

\noindent {\small {\bf Keywords:} hyperbolic systems, boundary conditions, difference approximations, stability, 
semigroup.}
\bigskip
\bigskip


Throughout this article, we use the notation
\begin{align*}
&\U := \{\zeta \in \C,|\zeta|>1 \}\, ,\quad \Ubar := \{\zeta \in \C,|\zeta| \ge 1 \}\, ,\\
&\D := \{\zeta \in \C,|\zeta|<1 \}\, ,\quad \cercle := \{\zeta \in \C,|\zeta|=1 \} \, .
\end{align*}
We let ${\mathcal M}_{d,p} ({\mathbb K})$ denote the set of $d \times p$ matrices with entries in ${\mathbb K} 
= \R \text{ or } \C$, and we use the notation ${\mathcal M}_d ({\mathbb K})$ when $p=d$. If $M \in {\mathcal M}_d 
(\C)$, $\text{sp} (M)$ denotes the spectrum of $M$, $\rho(M)$ denotes its spectral radius, and $M^*$ denotes the 
conjugate transpose of $M$. We let $I$ denote the identity matrix or the identity operator when it acts on an infinite 
dimensional space. We use the same notation $x^* \, y$ for the hermitian product of two vectors $x,y \in \C^d$ and 
for the euclidean product of two vectors $x,y \in \R^d$. The norm of a vector $x \in \C^d$ is $|x| := (x^* \, x)^{1/2}$. 
The corresponding norm on ${\mathcal M}_d (\C)$ is denoted $\| \cdot \|$. We let $\ell^2$ denote the set of square 
integrable sequences, without mentioning the indices of the sequences. Sequences may be valued in $\C^k$ for 
some integer $k$. In all this article, $N$ is a fixed positive integer.

\section{Introduction}
\label{intro}

We are interested in finite difference discretizations of hyperbolic initial boundary value problems. 
The {\it continuous} problem reads:
\begin{equation}
\label{ibvp}
\begin{cases}
\partial_t u + A \, \partial_x u = F(t,x) \, ,& (t,x) \in \R^+ \times \R^+ \, ,\\
B \, u(t,0) = g(t) \, ,& t \in \R^+ \, ,\\
u(0,x) = f(x) \, ,& x \in \R^+ \, ,
\end{cases}
\end{equation}
where, for simplicity, we consider the half-line $\R^+$ as the space domain. The matrix $A \in {\mathcal M}_N (\R)$ 
is assumed to be diagonalizable with real eigenvalues, and $B$ is a matrix - not necessarily a square one - that 
encodes the boundary conditions. The functions $F,g,f$ are given source terms, respectively, the interior source 
term, the boundary source term and the initial condition. Well-posedness for \eqref{ibvp} is equivalent to the 
{\it algebraic} condition:
\begin{equation*}
\text{\rm Ker } B \, \cap \, \text{\rm Span} \big( r_1,\dots,r_{N_+} \big) = \{ 0 \} \, ,
\end{equation*}
where the vectors $r_1,\dots,r_{N_+}$ span the unstable subspace of $A$, which corresponds to incoming characteristics. 
Furthermore, the matrix $B$ should have rank $N_+$. Provided these conditions are satisfied, the unique solution 
$u \in {\mathcal C}(\R^+;L^2(\R^+))$ to \eqref{ibvp} depends continuously on $f \in L^2(\R^+)$, $g \in L^2(\R^+)$ 
and $F \in L^2(\R^+ \times \R^+)$. We refer to \cite[chapter 4]{benzoni-serre} for a general presentation of the 
well-posedness theory for \eqref{ibvp}, as well as for its multidimensional analogue.

The well-posedness theory for finite difference discretizations of \eqref{ibvp} is far less clear. Let us first 
set a few notation. We let $\Delta x,\Delta t>0$ denote a space and a time step where the ratio $\lambda 
:=\Delta t/\Delta x$ is a fixed positive constant, and we also let $p,q,r,s$ denote some fixed integers. The 
solution to \eqref{ibvp} is approximated by means of a sequence $(U_j^n)$ defined for $n \in \N$, and 
$j\in 1-r+\N$. For $j=1-r,\dots,0$, the vector $U_j^n$ should be understood as an approximation of the 
trace $u(n\, \Delta t,0)$ on the boundary $\{ x=0 \}$. We consider finite difference approximations of 
\eqref{ibvp} that read:
\begin{equation}
\label{numibvp}
\begin{cases}
U_j^{n+1} = {\dps \sum_{\sigma=0}^s} Q_\sigma \, U_j^{n-\sigma} +\Delta t \, F_j^n \, ,& 
j\ge 1\, ,\quad n\ge s \, ,\\
U_j^{n+1} = {\dps \sum_{\sigma=-1}^s} B_{j,\sigma} \, U_1^{n-\sigma} +g_j^{n+1} \, ,& 
j=1-r,\dots,0\, ,\quad n\ge s \, ,\\
U_j^n = f_j^n \, ,& j\ge 1-r\, ,\quad n=0,\dots,s \, ,
\end{cases}
\end{equation}
where the operators $Q_\sigma$ and $B_{j,\sigma}$ are given by:
\begin{equation}
\label{defop}
Q_\sigma := \sum_{\ell=-r}^p A_{\ell,\sigma} \, {\bf T}^\ell \, ,\quad 
B_{j,\sigma} := \sum_{\ell=0}^q B_{\ell,j,\sigma} \, {\bf T}^\ell \, .
\end{equation}
In \eqref{defop}, all matrices $A_{\ell,\sigma},B_{\ell,j,\sigma}$ belong to ${\mathcal M}_N(\R)$ and 
are independent of the small parameter $\Delta t$, while ${\bf T}$ denotes the shift operator on the 
space grid: $({\bf T}^\ell v)_j :=v_{j+\ell}$.

Existence and uniqueness of a solution $(U_j^n)$ to \eqref{numibvp} is trivial since the numerical scheme 
is explicit, so the last requirement for well-posedness is continuous dependence of the solution on the three 
possible source terms $(F_j^n)$, $(g_j^n)$, $(f_j^n)$. This is a stability problem, and several definitions can 
be chosen. The following one dates back to the fundamental contribution \cite{gks}, and is specifically relevant 
when the boundary conditions are non-homogeneous ($(g_j^n) \not \equiv 0$):

\begin{definition}[Strong stability \cite{gks}]
\label{defstab1}
The finite difference approximation \eqref{numibvp} is said to be "strongly stable" if there exists a constant 
$C_1$ such that for all $\gamma>0$ and all $\Delta t \in \, ]0,1]$, the solution $(U_j^n)$ to \eqref{numibvp} 
with $(f_j^0) =\dots =(f_j^s) =0$ satisfies the estimate:
\begin{multline}
\label{stabilitenumibvp}
\dfrac{\gamma}{\gamma \, \Delta t+1} \, \sum_{n\ge s+1} \, \sum_{j\ge 1-r} 
\Delta t \, \Delta x \, {\rm e}^{-2\, \gamma \, n\, \Delta t} \, |U_j^n|^2 
+\sum_{n\ge s+1} \, \sum_{j=1-r}^p \Delta t \, {\rm e}^{-2\, \gamma \, n\, \Delta t} \, |U_j^n|^2 \\
\le C_1 \left\{ \dfrac{\gamma \, \Delta t+1}{\gamma} \, \sum_{n\ge s} \, \sum_{j\ge 1} 
\Delta t \, \Delta x \, {\rm e}^{-2\, \gamma \, (n+1)\, \Delta t} \, |F_j^n|^2 
+\sum_{n\ge s+1} \, \sum_{j=1-r}^0 \Delta t \, {\rm e}^{-2\, \gamma \, n\, \Delta t} \, |g_j^n|^2 
\right\} \, .
\end{multline}
\end{definition}

\noindent Another more common notion of stability only deals with nonzero initial data in \eqref{numibvp}, 
and was considered in the earlier publications \cite{kreiss1,osher1,osher2}:

\begin{definition}[Semigroup stability]
\label{defstab2}
The finite difference approximation \eqref{numibvp} is said to be "semigroup stable" if there exists a constant 
$C_2$ such that for all $\Delta t \in \, ]0,1]$, the solution $(U_j^n)$ to \eqref{numibvp} with $(F_j^n) =(g_j^n) 
=0$ satisfies the estimate:
\begin{equation}
\label{stabilitesemigroup}
\sup_{n\ge 0} \sum_{j \ge 1-r} \Delta x \, |U_j^n|^2 \le C_2 \, \sum_{n=0}^s \, \sum_{j \ge 1-r} \Delta x \, |f_j^n|^2 \, .
\end{equation}
\end{definition}

\begin{remark}
Both Definitions \ref{defstab1} and \ref{defstab2} are independent of the small parameter $\Delta t$ because 
of the fixed ratio $\Delta t/\Delta x$. We could therefore assume $\Delta t=1$, which we sometimes do later on, 
but have written \eqref{stabilitenumibvp} and \eqref{stabilitesemigroup} with $\Delta t$ and $\Delta x$ in order 
to make the connection with the "continuous" norms.
\end{remark}

Let us observe that semigroup stability for \eqref{numibvp} amounts to requiring that the (linear) operator
\begin{equation*}
(U^0,\dots,U^s) \longmapsto (U^1,\dots,U^{s+1}) \, ,
\end{equation*}
that is obtained by considering \eqref{numibvp} in the case $(F_j^n) =(g_j^n) =0$, be power bounded on $\ell^2 
\times \cdots \times \ell^2$. Let us quote \cite{TE} at this stage: \lq \lq The term {\it GKS-stable} is quite complicated. 
This is a special definition of stability (...) that involves exponential factors with respect to $t$ and other algebraic 
terms that remove it significantly from the more familiar stability notion of bounded norms of powers.\rq \rq 
$\,$ The goal of this article is to shed new light on the relations between these two notions of stability for 
\eqref{numibvp}.

There is clear evidence that semigroup stability does not imply strong stability for \eqref{numibvp}. One 
counter-example is given in \cite[page 361]{trefethen3}. In the PDE multidimensional context, a very simple 
counter-example can be constructed by considering the symmetric hyperbolic operator
\begin{equation*}
\partial_t +\begin{pmatrix}
1 & 0 \\
0 & -1 \end{pmatrix} \, \partial_{x_1} +\begin{pmatrix}
0 & 1 \\
1 & 0 \end{pmatrix} \, \partial_{x_2}
\end{equation*}
with maximally dissipative (but not strictly dissipative) boundary conditions. The maximal dissipativity 
property yields semigroup stability, see \cite[chapter 3]{benzoni-serre}, while the violation of the so-called 
Uniform Kreiss-Lopatinskii Condition precludes any trace estimate in $L^2$ of the solution in terms of the 
$L^2$ norm of the boundary source term.

Yet, a reasonable expectation is that strong stability does imply semigroup stability\footnote{This "uniform 
power boundedness conjecture" appears in an even stronger (!) version in \cite{kreiss-wu}.}. In the PDE 
multidimensional context, this was proved in \cite{kajitani,rauch} for both symmetric and strictly hyperbolic 
operators, later extended in \cite{audiard} to hyperbolic operators with constant multiplicity, and recently 
in \cite{metivier2} to an even wider class of hyperbolic operators. The symmetric case is more favorable 
and is easily dealt with by the introduction of auxiliary boundary conditions. Once again, the situation for 
difference approximations is not as complete. That strong stability implies semigroup stability is somehow 
hidden in the early works \cite{kreiss1,osher1,osher2} since the assumptions made there actually yield 
strong stability (even though only semigroup stability was proved then). The first general result on the 
"uniform power boundedness conjecture" dates back to \cite{wu} but is restricted to the case $s=0$ 
(numerical schemes with two time levels only) and to scalar problems. The analysis of \cite{wu} was 
generalized in \cite{jfcag} to the case of systems in any space dimension, still under the restriction $s=0$ 
and assuming that the discretized hyperbolic operator does not increase the $\ell^2$ norm on all $\Z$ 
($\Z^d$ in several space dimensions).

The present article is a first attempt to tackle the "uniform power boundedness conjecture" for schemes 
with more than two time levels, that is, when $s \ge 1$. Our main result, which is Theorem \ref{thm1} below, 
gives a trace estimate for the solution to \eqref{numibvp} in the case of nonzero initial data. We are not 
able yet to give a positive answer to the conjecture in a general framework, but we recover the results 
of \cite{kreiss1,osher1,osher2} as an easy corollary of Theorem \ref{thm1}. Unlike \cite{wu,jfcag}, our 
argument does not use the auxiliary Dirichlet boundary condition but relies on an easy summation by 
parts argument, as what one does for toy problems such as the upwind or Lax-Friedrichs schemes. 
Unfortunately, this summation by parts argument is restricted so far to the case $s=0$, but we do hope 
that our trace estimate for nonzero initial data does imply semigroup stability even for $s \ge 1$. This 
might require adapting the PDE arguments to the framework of difference approximations and is 
postponed to a future work.

\section{Assumptions and main result}
\label{section2}

We adopt the framework of \cite{jfc1,jfc2}. Let us first introduce the so-called amplification matrix:
\begin{equation}
\label{defA}
\forall \, \kappa \in \C \setminus \{ 0\} \, ,\, {\mathcal A}(\kappa) := \begin{pmatrix}
\widehat{Q_0}(\kappa) & \dots & \dots & \widehat{Q_s}(\kappa) \\
I & 0 & \dots & 0 \\
0 & \ddots & \ddots & \vdots \\
0 & 0 & I & 0 \end{pmatrix} \in {\mathcal M}_{N(s+1)}(\C)\, ,\quad 
\widehat{Q_\sigma}(\kappa) := \sum_{\ell=-r}^p \kappa^\ell \, A_{\ell,\sigma} \, .
\end{equation}
A necessary condition for both strong and semigroup stability of \eqref{numibvp} is that the discretization 
of the Cauchy problem be $\ell^2$ stable. We thus make our first assumption.

\begin{assumption}[Stability for the discrete Cauchy problem]
\label{assumption1}
There exists a constant $C>0$ such that the amplification matrix ${\mathcal A}$ in \eqref{defA} satisfies:
\begin{equation*}
\forall \, n \in \N \, ,\quad \forall \, \eta \in \R \, ,\quad 
\left\| {\mathcal A}({\rm e}^{i\, \eta})^n \right\| \le C \, .
\end{equation*}
In particular, the von Neumann condition $\rho({\mathcal A}({\rm e}^{i\, \eta})) \le 1$ holds.
\end{assumption}

\noindent We then make the following geometric regularity assumption on the difference operators $Q_\sigma$ 
in \eqref{numibvp}:

\begin{assumption}[Geometrically regular operator]
\label{assumption2}
The amplification matrix ${\mathcal A}$ defined by \eqref{defA} satisfies the following property: if 
$\underline{\kappa} \in \cercle$ and $\underline{z} \in \cercle \cap \text{\rm sp}({\mathcal A} 
(\underline{\kappa}))$ has algebraic multiplicity $\underline{\alpha}$, then there exist some 
functions $\zeta_1(\kappa),\dots,\zeta_{\underline{\alpha}}(\kappa)$ that are holomorphic in a 
neighborhood ${\mathcal W}$ of $\underline{\kappa}$ in $\C$, that satisfy
\begin{equation*}
\zeta_1(\underline{\kappa})=\dots=\zeta_{\underline{\alpha}}(\underline{\kappa})=\underline{z} \, ,\quad 
\det \big( z\, I -{\mathcal A}(\kappa) \big) = \vartheta(\kappa,z) \, 
\prod_{k=1}^{\underline{\alpha}} \big( z-\zeta_k(\kappa) \big) \, ,
\end{equation*}
with $\vartheta$ a holomorphic function of $(\kappa,z)$ in some neighborhood of 
$(\underline{\kappa},\underline{z})$ such that $\vartheta(\underline{\kappa},\underline{z}) \neq 0$, and 
furthermore, there exist some vectors $e_1(\kappa),\dots,e_{\underline{\alpha}}(\kappa) \in \C^{N(s+1)}$ 
that depend holomorphically on $\kappa \in {\mathcal W}$, that are linearly independent for all $\kappa 
\in {\mathcal W}$, and that satisfy
\begin{equation*}
\forall \, \kappa \in {\mathcal W} \, ,\quad \forall \, k=1,\dots,\underline{\alpha} \, ,\quad 
{\mathcal A}(\kappa) \, e_k(\kappa) =\zeta_k(\kappa) \, e_k(\kappa) \, .
\end{equation*}
\end{assumption}

\noindent Let us recall that in the scalar case ($N=1$), Assumption \ref{assumption2} is actually a consequence 
of Assumption \ref{assumption1}, see \cite[Lemma 7]{jfcnotes}. 
For technical reasons to be specified later in Section \ref{section3}, we make a final assumption on the amplification 
matrix ${\mathcal A}$:

\begin{assumption}[Non-glancing discretization]
\label{assumption3}
The amplification matrix ${\mathcal A}$ defined by \eqref{defA} satisfies the following property: if 
$\underline{\kappa} \in \cercle$ and $\underline{z} \in \cercle \cap \text{\rm sp}({\mathcal A} 
(\underline{\kappa}))$ has algebraic multiplicity $\underline{\alpha}$, then the eigenvalues 
$\zeta_1(\kappa),\dots,\zeta_{\underline{\alpha}}(\kappa)$ of ${\mathcal A}(\kappa)$ that 
are close to $\underline{z}$ when $\kappa$ is close to $\underline{\kappa}$ satisfy:
\begin{equation*}
\forall \, k=1,\dots,\underline{\alpha} \, ,\quad \zeta_k'(\underline{\kappa}) \neq 0 \, .
\end{equation*}
\end{assumption}

Many standard finite difference approximations satisfy Assumptions \ref{assumption1}, \ref{assumption2} 
and \ref{assumption3}, as for instance the upwind, Lax-Friedrichs and Lax-Wendroff schemes under 
a suitable CFL condition. The leap-frog approximation satisfies Assumptions \ref{assumption1} and 
\ref{assumption2} but violates Assumption \ref{assumption3}. The case $\zeta_k'(\underline{\kappa})=0$ 
gives rise to {\it glancing} wave packets with a vanishing group velocity, see \cite{trefethen1,trefethen3}. 
Here we assume that no such wave packet occurs.

For geometrically regular operators, the main results of \cite{jfc1,jfc2} show that strong stability 
is equivalent to an {\it algebraic condition}, known as the Uniform Kreiss-Lopatinskii Condition. Let 
us summarize the main steps in the analysis since some notation and results will be used later on. 
The main tool in the characterization of strong stability is the Laplace transform with respect to the 
time variable, which leads to the resolvent equation
\begin{equation}
\label{resolvent}
\begin{cases}
W_j -{\dps \sum_{\sigma=0}^s} z^{-\sigma-1} \, Q_\sigma \, W_j =F_j \, ,& j\ge 1\, ,\\
W_j -{\dps \sum_{\sigma=-1}^s} z^{-\sigma-1} \, B_{j,\sigma} \, W_1 =g_j \, ,& j=1-r,\dots,0\, ,
\end{cases}
\end{equation}
with $z \in \U$. The induction relation \eqref{resolvent} can be written in a more compact form by 
using an augmented vector. We introduce the matrices:
\begin{equation*}
\forall \, \ell=-r,\dots,p \, ,\quad \forall \, z \in \C \setminus \{ 0 \} \, ,\quad 
\A_\ell(z) := \delta_{\ell0}\, I-\sum_{\sigma=0}^s z^{-\sigma-1}\, A_{\ell,\sigma} \, ,
\end{equation*}
where $\delta_{\ell_1 \ell_2}$ denotes the Kronecker symbol. We also define the matrices
\begin{equation}
\label{defBlj}
\forall \, \ell =0,\dots,q \, ,\quad \forall \, j=1-r,\dots,0 \, ,\quad 
\forall \, z \in \C \setminus \{ 0 \} \, ,\quad 
\B_{\ell,j}(z) := \sum_{\sigma=-1}^s z^{-\sigma-1}\, B_{\ell,j,\sigma} \, .
\end{equation}
Our final assumption is rather standard and already appears in \cite{kreiss1}.

\begin{assumption}[Noncharacteristic discrete boundary]
\label{assumption4}
The matrices $\A_{-r}(z)$ and $\A_p(z)$ are invertible for all $z\in \Ubar$, or equivalently for 
all $z \in \C$ with $|z|>1-\eps_0$ for some $\eps_0 \in \, ]0,1]$.
\end{assumption}

Let us first consider the case $q<p$. In that case, all the $W_j$'s involved in the boundary conditions 
for the resolvent equation \eqref{resolvent} are coordinates of the augmented vector\footnote{Vectors 
are written indifferently in rows or columns in order to simplify the redaction.} ${\mathcal W}_1 := 
(W_p,\dots,W_{1-r}) \in \C^{N(p+r)}$. Using Assumption \ref{assumption4}, we can define a block 
companion matrix $\M(z)$ that is holomorphic on some open neighborhood ${\mathcal V} := \{ z \in \C 
\, , \, |z| >1-\eps_0 \}$ of $\Ubar$:
\begin{equation}
\label{defM}
\forall \, z \in {\mathcal V} \, ,\quad 
\M (z) := \begin{pmatrix}
-\A_p (z)^{-1} \, \A_{p-1}(z) & \dots & \dots & -\A_p (z)^{-1} \, \A_{-r}(z) \\
I & 0 & \dots & 0 \\
0 & \ddots & \ddots & \vdots \\
0 & 0 & I & 0 \end{pmatrix} \in {\mathcal M}_{N(p+r)}(\C) \, .
\end{equation}
We also define the matrix that encodes the boundary conditions for \eqref{resolvent}, namely
\begin{equation*}
\forall \, z \in \C \setminus \{ 0\} \, , \, \B (z):=\begin{pmatrix}
0 & \dots & 0 & -\B_{q,0}(z) & \dots & -\B_{0,0}(z) & I & & 0 \\
\vdots & & \vdots & \vdots & & \vdots & & \ddots & \\
0 & \dots & 0 & -\B_{q,1-r}(z) & \dots & -\B_{0,1-r}(z) & 0 & & I \end{pmatrix} \in 
{\mathcal M}_{Nr,N(p+r)}(\C) \, ,
\end{equation*}
with the $\B_{\ell,j}$'s defined in \eqref{defBlj}. With such definitions, it is a simple exercise to rewrite 
the resolvent equation \eqref{resolvent} as an induction relation for the augmented vector ${\mathcal W}_j 
:=(W_{j+p-1},\dots,W_{j-r}) \in \C^{N\, (p+r)}$, $j \ge 1$. This induction relation takes the form
\begin{equation}
\label{resolvent'}
\begin{cases}
{\mathcal W}_{j+1} = \M (z) \, {\mathcal W}_j +{\mathcal F}_j \, ,& j\ge 1\, ,\\
\B(z)\, {\mathcal W}_1 = {\mathcal G}\, ,& 
\end{cases}
\end{equation}
where the new source terms $({\mathcal F}_j),{\mathcal G}$ in \eqref{resolvent'} are given by:
\begin{equation*}
{\mathcal F}_j := (\A_p (z)^{-1} \, F_j,0,\dots,0) \, ,\quad {\mathcal G}:=(g_0,\dots,g_{1-r}) \, .
\end{equation*}
There is a similar equivalent form of \eqref{resolvent} in the case $q\ge p$, and we refer the reader 
to \cite[page 145]{jfcnotes} for the details. The main results of \cite{gks} and later \cite{jfc1,jfc2} 
characterize strong stability of \eqref{numibvp} in terms of an algebraic condition that involves 
the matrices $\M(z)$ and $\B(z)$ in \eqref{resolvent'}. This characterization of strong stability relies 
on a precise description of the stable and unstable spaces of the matrix $\M (z)$, including when 
$z$ becomes arbitrarily close to the unit circle. Some ingredients of the analysis are recalled and 
used in Section \ref{section3}.

Our main result is an estimate for the solution to \eqref{numibvp} with nonzero initial data. This estimate 
is entirely similar to \eqref{stabilitenumibvp} as far as the left hand-side of the inequality is concerned. 
Namely, we extend the known estimate for zero initial data to nonzero initial data by simply adding the 
$\ell^2$ norm of the initial data on the right hand-side of the inequality.

\begin{theorem}
\label{thm1}
Let Assumptions \ref{assumption1}, \ref{assumption2}, \ref{assumption3} and \ref{assumption4} be 
satisfied. If the scheme \eqref{numibvp} is strongly stable, then for all integer $P \in \N$, there exists 
a constant $C_P>0$ such that for all $\gamma>0$ and all $\Delta t \in \, ]0,1]$, the solution $(U_j^n)$ 
to \eqref{numibvp} with $(F_j^n) =(g_j^n) =0$ satisfies the estimate:
\begin{equation}
\label{estimthm}
\dfrac{\gamma}{\gamma \, \Delta t+1} \, \sum_{n\ge 0} \, \sum_{j\ge 1-r} 
\Delta t \, \Delta x \, {\rm e}^{-2\, \gamma \, n\, \Delta t} \, |U_j^n|^2 
+\sum_{n\ge 0} \, \sum_{j=1-r}^P \Delta t \, {\rm e}^{-2\, \gamma \, n\, \Delta t} \, |U_j^n|^2 
\le C_P \, \sum_{n=0}^s \, \sum_{j \ge 1-r} \Delta x \, |f_j^n|^2 \, .
\end{equation}
\end{theorem}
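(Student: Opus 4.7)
The strategy is to split $U = V + W$, where $V$ carries the initial data and propagates on the whole integer line $\Z$ and $W := U - V$ has zero initial data. Concretely, I would extend the initial data $(f_j^n)_{n=0,\dots,s,\,j\ge 1-r}$ to all of $\Z$ by zero, denote the extension $(\tilde f_j^n)$, and let $(V_j^n)_{j\in\Z,\,n\ge 0}$ be the solution of the discrete Cauchy problem $V_j^{n+1} = \sum_{\sigma=0}^s Q_\sigma V_j^{n-\sigma}$ with $V_j^n = \tilde f_j^n$ for $n=0,\dots,s$. A direct computation then shows that $W$ satisfies \eqref{numibvp} with vanishing interior and initial data and with boundary source $h_j^{n+1} = \sum_{\sigma=-1}^s B_{j,\sigma} V_1^{n-\sigma} - V_j^{n+1}$ for $j=1-r,\dots,0$; this is a finite linear combination of values of $V$ at the grid points $j'\in\{1-r,\dots,1+q\}$ and at times in a window of length $s+2$.

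By Assumption \ref{assumption1} and Plancherel on $\Z$, $\sup_{n\ge 0}\|V^n\|_{\ell^2(\Z)}^2 \le C\sum_{n'=0}^s\|f^{n'}\|_{\ell^2}^2$, and a geometric summation in $n$ yields the volume part of \eqref{estimthm} for $V$. Strong stability (Definition \ref{defstab1}) applied to $W$ produces the volume and trace parts of \eqref{estimthm} for $W$ at the cost of controlling $\sum_{n\ge s+1}\sum_{j=1-r}^0 \Delta t\, e^{-2\gamma n\Delta t}|h_j^n|^2$ by $\sum_{n'}\|f^{n'}\|^2$. Since $h$ is built out of finitely many boundary-grid traces of $V$, the entire theorem reduces to the hyperbolic trace estimate
\begin{equation*}
\sum_{n\ge 0}\sum_{j=1-r}^{P}\Delta t\, e^{-2\gamma n\Delta t}|V_j^n|^2 \le C_P\sum_{n'=0}^s\|f^{n'}\|_{\ell^2}^2
\end{equation*}
for the Cauchy solution $V$ and every integer $P$.

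This last estimate is the main technical obstacle, and it is where Assumptions \ref{assumption2} and \ref{assumption3} enter. I would combine Plancherel in $n$ (with the weight $e^{-\gamma n\Delta t}$) with the spatial Fourier transform on $\Z$ to rewrite the time sum as $(2\pi)^{-1}\int_{-\pi}^{\pi}|\hat V_{j_0}(e^{\gamma\Delta t+i\tau})|^2\,d\tau$, where $\hat V_{j_0}(z)$ is itself the inverse spatial Fourier transform at $j_0$ of a function of the resolvent $(zI-\mathcal{A}(e^{i\eta}))^{-1}$ applied to $(\tilde f^s(\eta),\dots,\tilde f^0(\eta))$. Using Assumption \ref{assumption2}, I would locally factor $\det(zI-\mathcal{A}(\kappa))$ near each $(\underline\kappa,\underline z)\in\cercle\times(\cercle\cap\text{sp}(\mathcal{A}(\underline\kappa)))$ as $\vartheta(\kappa,z)\prod_k(z-\zeta_k(\kappa))$, so that the resolvent has simple poles in $z$ at the eigenvalues $\zeta_k(e^{i\eta})$. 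Assumption \ref{assumption3} ensures $\zeta_k'(\underline\kappa)\neq 0$, which enables the local change of variables $\eta\mapsto\arg\zeta_k(e^{i\eta})$ with bounded Jacobian; the critical portion of the double integral then becomes a Plancherel-type identity absorbed by the initial-data norm. A finite partition of unity on $\cercle$ isolating each such critical point, together with elementary resolvent bounds elsewhere, would close the estimate. The main difficulty is carrying this out uniformly when $s\ge 1$, since $\mathcal{A}(\kappa)\in\mathcal{M}_{N(s+1)}(\C)$ is then a block companion matrix whose eigenvalue crossings on $\cercle$ are controlled only in the weak holomorphic sense of Assumption \ref{assumption2}; this is the genuinely new ingredient beyond the $s=0$ analysis of \cite{wu,jfcag}.
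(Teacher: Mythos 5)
Your first half coincides with the paper's own reduction: the decomposition $U=V+W$, with $V$ solving the Cauchy problem on $\Z$ for the zero-extended data and $W$ solving \eqref{numibvp} with zero initial data and a boundary source made of finitely many traces of $V$ at $j\in\{1-r,\dots,1+q\}$, is exactly Lemma \ref{lem1}; combined with Assumption \ref{assumption1} and strong stability it reduces Theorem \ref{thm1} to the trace estimate for the Cauchy solution, which is Proposition \ref{prop1}. Up to that point the proposal is correct. Your treatment of the trace estimate itself differs in formulation (spatial Fourier transform and powers of ${\mathcal A}({\rm e}^{i\eta})$, rather than the paper's temporal Laplace transform and the companion matrix $\M(z)$ of Theorem \ref{thmjfc1}), which is a legitimate alternative starting point, but it is precisely there that the argument has a genuine gap.

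After localizing near a frequency $\underline{\kappa}$ where ${\mathcal A}(\underline{\kappa})$ has unit-modulus eigenvalues and diagonalizing by Assumption \ref{assumption2}, the dangerous contribution involves a branch $\zeta_k({\rm e}^{i\eta})$ that lies on $\cercle$ at $\underline{\kappa}$ but in general moves strictly into $\D$ for nearby $\eta$ (think of Lax--Wendroff or upwind). Your change of variables $\eta\mapsto\arg\zeta_k({\rm e}^{i\eta})$ is legitimate thanks to Assumption \ref{assumption3}, but it does not turn the critical piece into ``a Plancherel-type identity'': writing $\zeta_k({\rm e}^{i\eta})^n={\rm e}^{in\mu}\,{\rm e}^{-n\beta(\mu)}$ with $\beta\ge 0$ vanishing at the critical frequency, one must prove $\sum_{n\ge 0}\big|\int {\rm e}^{in\mu}\,{\rm e}^{-n\beta(\mu)}\,g(\mu)\,{\rm d}\mu\big|^2\le C\,\|g\|_{L^2}^2$ uniformly, and the $n$-dependent damping couples $n$ and $\mu$, so Plancherel alone gives nothing. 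This uniform bound for the mixed oscillatory/dissipative integral is the heart of the matter, and it is exactly what the paper supplies in the Laplace formulation: the hyperbolic components are expressed through the Laplace transform of the data along the curves \eqref{defClgamma}, bounded by Carlson's lemma \cite{carlson}, which in turn requires the uniform total-variation estimate of Lemma \ref{lemTVB}; the delicate Case 2 of that lemma (where ${\rm Re}\,f(i\theta)\ge c\,\theta^{2k}$) treats precisely the dissipative tangency that your sketch silently assumes away, and the paper points out that \cite{sarason1} only covers the conservative case. Note also that the difficulty you flag at the end (eigenvalue crossings for $s\ge 1$) is not the true obstruction: Assumption \ref{assumption2} already provides holomorphic eigenvalue and eigenvector branches at such crossings for any $s$, and Theorem \ref{thmjfc1} packages this into the block reduction. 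As written, your proposal reduces the theorem correctly but leaves its core analytic estimate unproved.
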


The analogue of the estimate \eqref{estimthm} is a key tool in \cite{kajitani} for proving the semigroup 
boundedness in the PDE multidimensional context. This requires however rather strong algebraic 
properties in order to justify some integration by parts argument (in a possibly non-symmetric context).

Let us now explain the links between Theorem \ref{thm1} and previous results in the literature, and 
more specifically with the analysis in \cite{osher1} (which is already an extension of \cite{kreiss1}). 
As explained earlier, Assumption \ref{assumption1} is necessary for any kind of stability result. It 
corresponds to condition (1) in the main Theorem of \cite{osher1} (see \cite[XIX]{osher1}). Assumption 
\ref{assumption2} is automatically satisfied in \cite{osher1} because the equations are scalar and the 
scheme involves only two time levels (recall that for $N=1$, Assumption \ref{assumption2} actually 
follows from Assumption \ref{assumption1}). Assumption \ref{assumption2} seems to be rather natural 
in one space dimension, whatever the values of $N$ and $s$, see the discussion in \cite[Section 2.2]{jfcnotes}. 
Assumption \ref{assumption3} is hidden in condition (2) of the main Theorem of \cite{osher1}, but allows 
for slightly more general situations. Eventually, strong stability corresponds to condition (4) in the main 
Theorem of \cite{osher1}. So at this stage, one might reasonably ask whether Theorem \ref{thm1} does 
imply the main result of \cite{osher1}, that is, semigroup stability of \eqref{numibvp} when $s=0$. This 
is the purpose of the following Corollary.

\begin{corollary}
\label{coro1}
In addition to Assumptions \ref{assumption1}, \ref{assumption2}, \ref{assumption3} and \ref{assumption4}, 
let us assume\footnote{All these extra assumptions are also present in \cite{osher1}.} $s=0$ and:
\begin{equation*}
\sum_{\ell=-r}^p A_{\ell,0} =I \, ,\quad \| Q_0 \|_{\ell^2(\Z) \rightarrow \ell^2(\Z)} =1 \, .
\end{equation*}
If the scheme \eqref{numibvp} is strongly stable, then it is also semigroup stable.
\end{corollary}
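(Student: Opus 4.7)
The plan is to combine the trace estimate of Theorem~\ref{thm1} with a simple summation--by--parts argument that exploits the contractivity of $Q_0$ on $\ell^2(\Z)$. There are three steps.

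First, I apply Theorem~\ref{thm1} to the solution $(U_j^n)$ of \eqref{numibvp} with $F_j^n \equiv g_j^n \equiv 0$. The key observation is that in estimate \eqref{estimthm} the \emph{second} term on the left-hand side carries no $\gamma$-prefactor, while the right-hand side is independent of $\gamma$. Letting $\gamma \to 0^+$ and invoking monotone convergence therefore yields, for each fixed $P \in \N$, the unweighted trace bound
\[
\sum_{n \ge 0} \sum_{j = 1-r}^{P} \Delta t \, |U_j^n|^2 \;\le\; C_P \sum_{j \ge 1-r} \Delta x \, |f_j^0|^2 .
\]
In particular, with $P = 0$, this controls the $\ell^2$-in-time norm of the $r$ boundary traces $U_{1-r}^n, \dots, U_0^n$.

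Second, set $E_n := \sum_{j \ge 1-r} \Delta x \, |U_j^n|^2$ and, for each $n$, extend $U^n$ to all of $\Z$ by zero, producing $W^n \in \ell^2(\Z)$ with $\|W^n\|_{\ell^2(\Z)}^2 = E_n / \Delta x$. For any $j \ge 1$, each shift in the stencil $(Q_0 W^n)_j = \sum_{\ell = -r}^{p} A_{\ell,0} \, W_{j+\ell}^n$ lands in $\{k \ge 1-r\}$, so $(Q_0 W^n)_j = (Q_0 U^n)_j = U_j^{n+1}$ by the interior equation. Using the hypothesis $\|Q_0\|_{\ell^2(\Z) \to \ell^2(\Z)} = 1$,
\[
\sum_{j \ge 1} |U_j^{n+1}|^2 \;\le\; \|Q_0 W^n\|_{\ell^2(\Z)}^2 \;\le\; \|W^n\|_{\ell^2(\Z)}^2 \;=\; E_n / \Delta x .
\]
Adding the $r$ boundary contributions $\sum_{j=1-r}^{0} |U_j^{n+1}|^2$ to both sides and multiplying by $\Delta x$ gives the discrete energy inequality
\[
E_{n+1} - E_n \;\le\; \Delta x \sum_{j=1-r}^{0} |U_j^{n+1}|^2 .
\]

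Third, I telescope this inequality from $n=0$ to $n_0 - 1$ and feed in the trace bound from the first step with $P = 0$, using $\Delta x = \Delta t / \lambda$ to match weights:
\[
E_{n_0} \;\le\; E_0 + \lambda^{-1} \sum_{n=1}^{n_0} \sum_{j=1-r}^{0} \Delta t \, |U_j^n|^2 \;\le\; \bigl( 1 + C_0 / \lambda \bigr) \, E_0 .
\]
Since $n_0$ is arbitrary and $\lambda$ is fixed, this is exactly \eqref{stabilitesemigroup} with $C_2 := 1 + C_0/\lambda$, independent of $\Delta t$.

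Essentially, there is no real obstacle: the whole point is that Theorem~\ref{thm1} already does the heavy lifting, and the remaining work is one line of summation by parts made possible by $\|Q_0\|_{\ell^2(\Z)} = 1$. The only subtle point is the $\gamma \to 0^+$ passage that turns the weighted GKS trace estimate into an unweighted one; this is licit precisely because the boundary trace sits in a term of \eqref{estimthm} that has no vanishing prefactor. Notice that the consistency hypothesis $\sum_\ell A_{\ell,0} = I$ is not actually used in the argument: only the contractivity of $Q_0$ matters, which explains the author's remark that the approach mimics what one does for toy schemes such as upwind or Lax--Friedrichs. The restriction to $s = 0$ enters through the fact that the whole-line propagator is a single operator $Q_0$, so that the extend-by-zero step is one equation rather than a system involving past time levels.
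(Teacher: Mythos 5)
Your proof is correct, and it reaches the conclusion by a genuinely shorter route than the paper's. The paper first develops a discrete Leibniz formula and two integration-by-parts results (Corollaries \ref{coroipp1} and \ref{coroipp2}), uses the consistency hypothesis $\sum_{\ell} A_{\ell,0}=I$ to rewrite $Q_0-I$ in terms of discrete derivatives (Lemma \ref{lemdecomp}), derives an exact half-line energy balance whose boundary contribution is an explicit quadratic form $q(U_{1-r}^n,\dots,{\bf D}^{p+r-1}U_{1-r}^n)$, and only then invokes the zero-extension together with $\|Q_0\|_{\ell^2(\Z)\to\ell^2(\Z)}=1$ to absorb the interior quadratic terms, arriving at $\sum_{j\ge 1}|U_j^{n+1}|^2-\sum_{j\ge 1}|U_j^n|^2\le C\,\sum_{j=1-r}^p |U_j^n|^2$, which is then summed in $n$ and closed with Theorem \ref{thm1} by letting $\gamma\to 0$ --- exactly your third step. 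You shortcut the entire summation-by-parts machinery: extending $U^n$ by zero and using only the contraction property of $Q_0$ on $\ell^2(\Z)$ gives at once $\sum_{j\ge 1}|U_j^{n+1}|^2\le \sum_{j\ge 1-r}|U_j^n|^2$, so the per-step energy increment is controlled by the trace on the $r$ boundary cells alone, and Theorem \ref{thm1} with $P=0$ suffices (the paper needs $P=p$); I checked the chain $\sum_{j\ge1}|U_j^{n+1}|^2\le\|Q_0W^n\|_{\ell^2(\Z)}^2\le\|W^n\|_{\ell^2(\Z)}^2$ and the telescoping, and they are sound, as is the monotone-convergence passage $\gamma\to 0^+$ (which the paper also performs). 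Your remark that consistency is never used is accurate for your argument: the paper needs it only to build the discrete-derivative decomposition (and to note that $\|Q_0\|=1$ is then equivalent to $\|Q_0\|\le 1$), whereas your route needs only $\|Q_0\|_{\ell^2(\Z)\to\ell^2(\Z)}\le 1$. What the heavier route buys is an exact local energy identity with an explicit boundary flux and dissipation terms, which the author hopes to extend to multilevel schemes $s\ge 1$; your cruder operator-norm comparison is tied to the two-level structure, but for the corollary as stated it is complete.
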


We emphasize that the decomposition technique used in \cite{osher1} does not seem to easily extend to 
the case $s \ge 1$, and this is the main reason why we advocate an alternative approach that is based on 
the trace estimate \eqref{estimthm} and a suitable integration by parts formula (see Section \ref{appendixA} 
for the proof of Corollary \ref{coro1}). Comparing with the derivation of semigroup estimates for \eqref{numibvp} 
in \cite{wu,jfcag}, the present approach is closer to the one that has been used in the PDE context, see e.g. 
\cite{kajitani,rauch,audiard}, and is also closer to the one that is used on toy problems such as the Lax-Friedrichs 
or upwind schemes, see \cite[chapter 11]{gko}.

The proof of Theorem \ref{thm1} is given in Section \ref{section3} and follows some arguments that appear in 
the surprisingly unnoticed\footnote{Actually, one of the main results of \cite{sarason1} shows that the uniform 
Lopatinskii condition is a sufficient condition for strong well-posedness of strictly hyperbolic initial boundary 
value problems, but the proof in \cite{sarason1} is restricted to constant coefficients linear systems, while the 
technique developed in \cite{kreiss2} extends to variable coefficients and therefore to nonlinear problems by 
fixed point iteration. Another main result in \cite{sarason1} gives stability estimates for solutions to initial 
boundary value problems with nonzero initial data, and this seems to be the first result of this kind for 
non-symmetric systems.} contribution \cite{sarason1}, see also \cite[section 5]{sarason2}. Our goal is to 
adapt such arguments to difference approximations and to make precise the new arguments involved in 
this extension. More precisely, the non-glancing Assumption is used in the proof of Theorem \ref{thm1} 
to show a trace estimate for the solution to the fully discrete Cauchy problem on $\Z$. Thanks to this trace 
estimate, we can incorporate the initial data for \eqref{numibvp} in the solution to a Cauchy problem, which 
reduces the study of \eqref{numibvp} to zero initial data and nonzero boundary source term. There is a wide 
literature on trace operators for hyperbolic Cauchy problems, see for instance the "well-known", though 
unpublished, reference \cite{MT} and works cited therein. We do not aim at a thorough description of the 
trace operator here, but rather focus on its $\ell^2$-boundedness. As explained in Appendix \ref{appA}, 
$\ell^2$-boundedness of the trace operator for the discrete Cauchy problem will be seen to be 
equivalent\footnote{The equivalent result for PDE problems seems to be part of folklore, though we have 
not found a detailed proof based on elementary arguments.} to the non-glancing condition in Assumption 
\ref{assumption3}.

\section{Proof of Theorem \ref{thm1}}
\label{section3}

From now on, we consider the scheme \eqref{numibvp} and assume that it is strongly stable in the 
sense of Definition \ref{defstab1}. When the interior and boundary source terms vanish, the scheme 
reads
\begin{equation}
\label{scheme}
\begin{cases}
U_j^{n+1} = {\dps \sum_{\sigma=0}^s} Q_\sigma \, U_j^{n-\sigma} \, ,& j\ge 1\, ,\quad n\ge s \, ,\\
U_j^{n+1} = {\dps \sum_{\sigma=-1}^s} B_{j,\sigma} \, U_1^{n-\sigma} \, ,& j=1-r,\dots,0\, ,\quad n\ge s \, ,\\
U_j^n = f_j^n \, ,& j\ge 1-r\, ,\quad n=0,\dots,s \, ,
\end{cases}
\end{equation}
with initial data $f^0,\dots,f^s \in \ell^2$.

All constants appearing in the estimates below are independent of the Laplace parameter $\gamma>0$, 
when present.

\subsection{Reduction to a Cauchy problem}

We decompose the solution $(U_j^n)$ to \eqref{scheme} as $U_j^n=V_j^n+W_j^n$, where $(V_j^n)$ satisfies 
a pure Cauchy problem that incorporates the initial data of \eqref{scheme}, and $(W_j^n)$ satisfies a system 
of the form \eqref{numibvp} with zero initial data and nonzero boundary source term. More precisely, $(V_j^n)$ 
denotes the solution to
\begin{equation}
\label{schemeaux1}
\begin{cases}
V_j^{n+1} = {\dps \sum_{\sigma=0}^s} Q_\sigma \, V_j^{n-\sigma} \, ,& j\in \Z \, ,\quad n\ge s \, ,\\
V_j^n = f_j^n \, ,& j \ge 1-r\, ,\quad n=0,\dots,s \, ,\\
V_j^n = 0 \, ,& j \le -r\, ,\quad n=0,\dots,s \, ,
\end{cases}
\end{equation}
and $(W_j^n)$ denotes the solution to
\begin{equation}
\label{schemeaux2}
\begin{cases}
W_j^{n+1} = {\dps \sum_{\sigma=0}^s} Q_\sigma \, W_j^{n-\sigma} \, ,& j\ge 1\, ,\quad n\ge s \, ,\\
W_j^{n+1} = {\dps \sum_{\sigma=-1}^s} B_{j,\sigma} \, W_1^{n-\sigma} +g_j^{n+1} \, ,& 
j=1-r,\dots,0\, ,\quad n\ge s \, ,\\
W_j^n = 0 \, ,& j\ge 1-r\, ,\quad n=0,\dots,s \, ,
\end{cases}
\end{equation}
where the source term $(g_j^n)$ in \eqref{schemeaux2} is defined by
\begin{equation}
\label{defgjn}
\forall \, j=1-r,\dots,0\, ,\quad \forall \, n\ge s+1 \, ,\quad 
g_j^n :=-V_j^n +\sum_{\sigma=-1}^s B_{j,\sigma} \, V_1^{n-1-\sigma} \, .
\end{equation}
The following result shows that Theorem \ref{thm1} only relies on a trace estimate for the solution to 
\eqref{schemeaux1}.

\begin{lemma}
\label{lem1}
Let Assumption \ref{assumption1} be satisfied. Assume furthermore that for all $P \in \N$, there exists a constant 
$C_P>0$, that does not depend on the initial data in \eqref{schemeaux1}, such that the solution $(V_j^n)$ to 
\eqref{schemeaux1} satisfies
\begin{equation}
\label{estimlem1}
\sum_{n \ge 0} \sum_{j=1-r}^P |V_j^n|^2 \le C_P \, \sum_{n=0}^s \, \sum_{j \ge 1-r} |f_j^n|^2 \, .
\end{equation}
Then the conclusion of Theorem \ref{thm1} holds.
\end{lemma}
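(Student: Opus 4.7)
The plan is to establish \eqref{estimthm} separately for $V_j^n$ and for $W_j^n$, then combine via the triangle inequality applied to $U_j^n = V_j^n + W_j^n$.

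For the Cauchy part $V$: Assumption \ref{assumption1} combined with Plancherel's theorem (applied to the amplification matrix ${\mathcal A}$ in the Fourier variable $\eta$) yields the uniform $\ell^2$ bound
\begin{equation*}
\forall \, n \ge 0 \, ,\quad \sum_{j \in \Z} \Delta x \, |V_j^n|^2 \le C \, \sum_{m=0}^s \sum_{j \ge 1-r} \Delta x \, |f_j^m|^2 \, .
\end{equation*}
Multiplying by $\Delta t \, {\rm e}^{-2\gamma n \Delta t}$, summing on $n \ge 0$, and using the elementary inequality $\sum_{n\ge 0} \Delta t \, {\rm e}^{-2\gamma n \Delta t} \le (\gamma \Delta t + 1)/(2\gamma)$, the weight $\gamma/(\gamma\Delta t + 1)$ appearing in \eqref{estimthm} is absorbed and one obtains the interior $\ell^2$ part for $V$. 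The trace part $\sum_{j=1-r}^P$ for $V$ follows at once from the hypothesis \eqref{estimlem1}, after dropping the harmless weight $ {\rm e}^{-2\gamma n \Delta t} \le 1$ and inserting $\Delta t = \lambda \Delta x$.

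For the boundary-data part $W$: one first controls the source term $g_j^n$ defined in \eqref{defgjn}. Expanding $B_{j,\sigma} = \sum_{\ell = 0}^q B_{\ell,j,\sigma} \, {\bf T}^\ell$ gives, for $1 - r \le j \le 0$ and $n \ge s+1$, the pointwise bound
\begin{equation*}
|g_j^n|^2 \le C \, \Big( |V_j^n|^2 + \sum_{\sigma = -1}^s \sum_{\ell = 0}^q |V_{1+\ell}^{n-1-\sigma}|^2 \Big) \, ,
\end{equation*}
so $g$ is controlled by the trace of $V$ on the fixed strip $\{1-r, \dots, q+1\}$. Applying \eqref{estimlem1} with $P_0 := \max(P, q+1)$ yields
\begin{equation*}
\sum_{n \ge s+1} \sum_{j=1-r}^0 \Delta t \, {\rm e}^{-2\gamma n \Delta t} \, |g_j^n|^2 \le C \, \Delta x \sum_{m=0}^s \sum_{j \ge 1-r} |f_j^m|^2 \, .
\end{equation*}
Injecting this into the strong stability estimate \eqref{stabilitenumibvp} applied to $W$ (whose interior source and initial data both vanish) produces \eqref{estimthm} for $W$, with the trace sum up to the structural index $p$. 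Extending the trace sum from $p$ to an arbitrary $P$ is then a standard iteration of the interior recurrence: one solves the scheme relation $U_1^{n+1} = \sum_{\sigma,\ell} A_{\ell,\sigma} U_{1+\ell}^{n-\sigma}$ for $U_{1+p}^n$ (using invertibility of $A_{p,0}$, a consequence of Assumption \ref{assumption4} near $z=\infty$) and proceeds inductively.

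Finally, $|U_j^n|^2 \le 2(|V_j^n|^2 + |W_j^n|^2)$ yields \eqref{estimthm} for $U$. The small-time contributions $n = 0, \dots, s$, not covered by the sums in \eqref{stabilitenumibvp} which start at $n = s+1$, are absorbed trivially using $U_j^n = f_j^n$ there together with ${\rm e}^{-2\gamma n \Delta t} \le 1$, $\Delta t \le 1$, and $\Delta t = \lambda \Delta x$. The main difficulty is bookkeeping: one must choose the single index $P_0 = \max(P, q+1)$ so that one application of \eqref{estimlem1} simultaneously controls the trace required by \eqref{estimthm} for $V$ and the strip supporting $g$, and one must keep every constant uniform in $\gamma > 0$ and $\Delta t \in \, ]0,1]$ by tracking carefully the $\gamma$-weighted geometric sums.
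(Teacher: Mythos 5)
Your overall route is the paper's: decompose $U=V+W$ as in \eqref{schemeaux1}--\eqref{defgjn}, get the weighted interior bound for $V$ from Assumption \ref{assumption1} plus an elementary summation of the weights, get the trace bound for $V$ directly from the hypothesis \eqref{estimlem1}, control $g_j^n$ pointwise by the trace of $V$ on the fixed strip $\{1-r,\dots,1+q\}$ and hence by the initial data via \eqref{estimlem1}, and then invoke the strong stability estimate \eqref{stabilitenumibvp} for $W$, which has zero initial data. All of that matches the paper's proof and is correct, including the bookkeeping of the weights and of the times $n\le s$.

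The one step where you go beyond the paper is also where your justification breaks: the extension of the trace sum from $j\le p$ to $j\le P$ for the $W$-part. You propose to solve the interior relation for $U_{1+p}^n$ ``using invertibility of $A_{p,0}$, a consequence of Assumption \ref{assumption4} near $z=\infty$''. That implication is false as soon as $s\ge 1$, which is exactly the regime the paper is after: Assumption \ref{assumption4} asserts invertibility of $\A_p(z)=-\sum_{\sigma=0}^s z^{-\sigma-1}A_{p,\sigma}$ for $|z|>1-\eps_0$, and $z=\infty$ does not belong to this set, so $A_{p,0}$ may well be singular (take $N=1$, $s=1$, $A_{p,0}=0$, $A_{p,1}=1$: then $\A_p(z)=-z^{-2}\neq 0$ on the whole region, yet your single-time-level elimination of $U_{1+p}^n$ is impossible). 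The object Assumption \ref{assumption4} lets you invert is the time-convolution operator acting across all $s+1$ levels, with symbol $-z\,\A_p(z)=\sum_{\sigma=0}^s z^{-\sigma}A_{p,\sigma}$; its inverse is holomorphic in $z^{-1}$ on a disc of radius larger than $1$ except possibly for a pole at $z^{-1}=0$, i.e.\ it involves finitely many forward time shifts, so making the iteration rigorous also requires using causality (zero data for $W$) together with the weights ${\rm e}^{-2\gamma n\Delta t}$, the regime $\gamma\,\Delta t\gtrsim 1$ being handled directly by the interior term. Note for comparison that the paper's own proof of Lemma \ref{lem1} does not carry out this extension at all: it simply combines \eqref{estimlem1Vn}, valid for every $P$ by hypothesis, with the strong-stability bound \eqref{estimlem1Wn} for $W$, whose trace only runs up to $j=p$. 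So your instinct to address the range $p<j\le P$ explicitly is sound, but the mechanism you give is precisely the one that fails for multi-level schemes and needs to be replaced by the argument sketched above.
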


\begin{proof}
Assumption \ref{assumption1} shows that the discrete Cauchy problem is stable in $\ell^2$, that is to say, 
there exists a numerical constant $C$ such that
\begin{equation*}
\sup_{n \ge 0} \, \sum_{j \in \Z} \Delta x \, |V_j^n|^2 \le C \, \sum_{n=0}^s \, \sum_{j \ge 1-r} \Delta x \, |f_j^n|^2 \, .
\end{equation*}
Introducing the parameter $\gamma >0$, and summing with respect to $n \in \N$, we get
\begin{equation*}
\dfrac{\gamma}{\gamma +1} \, \sum_{n \ge 0} \sum_{j \in \Z} \Delta x \, {\rm e}^{-2\, \gamma \, n} \, |V_j^n|^2 
\le C \, \dfrac{\gamma}{(1-{\rm e}^{-2\, \gamma}) \, (\gamma +1)} \, \sum_{n=0}^s \, \sum_{j \ge 1-r} \Delta x \, |f_j^n|^2 
\le C \, \sum_{n=0}^s \, \sum_{j \ge 1-r} \Delta x \, |f_j^n|^2 \, .
\end{equation*}
The substitution $\gamma \rightarrow \gamma \, \Delta t$ and the trace estimate \eqref{estimlem1} already 
yield:
\begin{equation}
\label{estimlem1Vn}
\dfrac{\gamma}{\gamma \, \Delta t+1} \, \sum_{n\ge 0} \, \sum_{j\ge 1-r} 
\Delta t \, \Delta x \, {\rm e}^{-2\, \gamma \, n\, \Delta t} \, |V_j^n|^2 
+\sum_{n\ge 0} \, \sum_{j=1-r}^P \Delta t \, {\rm e}^{-2\, \gamma \, n\, \Delta t} \, |V_j^n|^2 
\le C_P \, \sum_{n=0}^s \, \sum_{j \ge 1-r} \Delta x \, |f_j^n|^2 \, .
\end{equation}

The trace estimate \eqref{estimlem1} for $(V_j^n)$ gives a bound for the boundary source term $(g_j^n)$ in 
\eqref{defgjn}. Indeed, we have
\begin{equation*}
|g_j^n| \le C \, \sum_{\sigma=-1}^s \sum_{\ell=1-r}^{1+q} |V_\ell^{n-1-\sigma}| \, ,
\end{equation*}
with a constant $C$ that does not depend on $j$, $n$, nor on the sequence $(V_j^n)$. Introducing 
the parameter $\gamma>0$, we thus obtain
\begin{equation*}
\sum_{n \ge s+1} \sum_{j=1-r}^0 \Delta t \, {\rm e}^{-2\, \gamma \, n \, \Delta t} \, |g_j^n|^2 
\le C \, \sum_{n\ge 0} \, \sum_{j=1-r}^{1+q} \Delta t \, {\rm e}^{-2\, \gamma \, n\, \Delta t} \, |V_j^n|^2 
\le C \, \sum_{n=0}^s \, \sum_{j \ge 1-r} \Delta x \, |f_j^n|^2 \, ,
\end{equation*}
where we have used \eqref{estimlem1} again (with $P=1+q$). Since the scheme \eqref{numibvp} is 
strongly stable and \eqref{schemeaux2} starts with zero initial conditions, we can use the strong 
stability estimate and obtain
\begin{multline}
\label{estimlem1Wn}
\dfrac{\gamma}{\gamma \, \Delta t+1} \, \sum_{n\ge 0} \, \sum_{j\ge 1-r} 
\Delta t \, \Delta x \, {\rm e}^{-2\, \gamma \, n\, \Delta t} \, |W_j^n|^2 
+\sum_{n\ge 0} \, \sum_{j=1-r}^p \Delta t \, {\rm e}^{-2\, \gamma \, n\, \Delta t} \, |W_j^n|^2 \\
\le C \, \sum_{n \ge s+1} \sum_{j=1-r}^0 \Delta t \, {\rm e}^{-2\, \gamma \, n \, \Delta t} \, |g_j^n|^2 
\le C \, \sum_{n=0}^s \, \sum_{j \ge 1-r} \Delta x \, |f_j^n|^2 \, .
\end{multline}
The combination of both estimates \eqref{estimlem1Vn} and \eqref{estimlem1Wn} gives the conclusion 
of Theorem \ref{thm1}.
\end{proof}

Our goal now is to show that the trace estimate \eqref{estimlem1} is valid for the solution to the Cauchy 
problem \eqref{schemeaux1}. This is summarized in the following result.

\begin{proposition}
\label{prop1}
Let Assumptions \ref{assumption1}, \ref{assumption2}, \ref{assumption3} and \ref{assumption4} be 
satisfied. Then for all $P \in \N$, there exists a constant $C_P>0$ such that for all $\gamma>0$, the 
solution $(V_j^n)_{j \in \Z, n \in \N}$ to \eqref{schemeaux1} satisfies
\begin{equation*}
\sum_{n \ge 0} \sum_{j=1-r}^P {\rm e}^{-2\, \gamma \, n} \, |V_j^n|^2 \le C_P \, \sum_{n=0}^s \, 
\sum_{j \ge 1-r} |f_j^n|^2 \, .
\end{equation*}
\end{proposition}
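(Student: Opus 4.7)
\textbf{Plan for the proof of Proposition \ref{prop1}.}

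My approach combines the spatial Fourier transform with Plancherel in time, reducing the trace bound to a uniform $L^2$ estimate for a Cauchy integral. Take the spatial Fourier transform $\hat{V}^n(\eta)$ and form the augmented state $\mathbf{V}^n(\eta) := (\hat{V}^n,\dots,\hat{V}^{n-s})^T(\eta)$, which satisfies $\mathbf{V}^{n+1}(\eta) = {\mathcal A}(e^{i\eta})\mathbf{V}^n(\eta)$ for $n \ge s$, with $\mathbf{V}^s(\eta)$ determined by the initial data $(f^0,\dots,f^s)$. Setting $\Lambda_j(z) := \sum_{n \ge 0} z^{-n-1} V_j^n$, the weighted Parseval identity gives
$$ \sum_{n \ge 0} e^{-2\gamma n}|V_j^n|^2 = \frac{e^{2\gamma}}{2\pi}\int_0^{2\pi} |\Lambda_j(e^\gamma e^{i\tau})|^2\,d\tau, $$
so it suffices to bound $\|\Lambda_j\|_{L^2(|z|=e^\gamma)}$ uniformly in $\gamma>0$ and in $j \in \{1-r,\dots,P\}$. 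Summing the geometric series $\sum_{m \ge 0} z^{-m}{\mathcal A}^m = z(zI - {\mathcal A})^{-1}$ yields
$$ \Lambda_j(z) = R_j(z) + \frac{z^{-s}}{2\pi}\int_0^{2\pi} e^{ij\eta}\, \pi_1(zI - {\mathcal A}(e^{i\eta}))^{-1}\mathbf{V}^s(\eta)\,d\eta, $$
where $R_j(z) := \sum_{n=0}^{s-1} z^{-n-1} f_j^n$ is a transient polynomial trivially bounded in $L^2(|z|=e^\gamma)$, and $\pi_1$ denotes projection onto the first $N$-block.

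The whole difficulty concentrates in the resolvent integral. I would introduce a partition of unity $\{\chi_\alpha\}$ on $\R/2\pi\Z$ separating the points $\underline{\eta}$ for which $\mathrm{sp}({\mathcal A}(e^{i\underline{\eta}})) \cap \cercle \neq \emptyset$ from the rest. On pieces supported away from such critical wave-numbers, Assumption \ref{assumption1} together with compactness guarantees that $(zI - {\mathcal A}(e^{i\eta}))^{-1}$ is uniformly bounded for $z$ in some neighborhood of $\Ubar$; Cauchy-Schwarz in $\eta$ and Fubini in $\tau$ then yield a bound of the form $\lesssim \|\mathbf{V}^s\|_{L^2}^2 \lesssim \sum_{m} \|f^m\|_{\ell^2}^2$.

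Near each critical wave-number $\underline{\kappa} = e^{i\underline{\eta}}$, Assumption \ref{assumption2} provides the local holomorphic factorization
$$ (zI - {\mathcal A}(\kappa))^{-1} = \sum_{k=1}^{\underline{\alpha}} \frac{\Pi_k(\kappa)}{z - \zeta_k(\kappa)} + H(\kappa,z), $$
with $\Pi_k(\kappa)$ holomorphic rank-one residue matrices built from the holomorphic eigenvectors $e_k(\kappa)$ and their associated left eigenvectors, and $H$ holomorphic (hence bounded) on a neighborhood of $(\underline{\kappa},\underline{z})$. Assumption \ref{assumption3}, $\zeta'_k(\underline{\kappa}) \neq 0$, is precisely what allows the local change of variable $\zeta = \zeta_k(e^{i\eta})$ to be a diffeomorphism from a neighborhood of $\underline{\eta}$ onto an arc of $\cercle$ through $\underline{z}$. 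Since $|e^{ij\eta}| = 1$, each singular term contributes (up to smooth and $j$-uniformly bounded weights) a Cauchy integral
$$ \int_{\mathrm{arc}} \frac{\psi_{j,k}(\zeta)}{z - \zeta}\,d\zeta, \qquad \|\psi_{j,k}\|_{L^2(\cercle)} \lesssim \|\mathbf{V}^s\|_{L^2(\cercle)} \lesssim \sum_{m=0}^s \|f^m\|_{\ell^2}, $$
uniformly in $j$. The classical Hardy-space estimate then provides the required $L^2(d\tau)$ bound on $|z| = e^\gamma$, uniformly in $\gamma > 0$. Summing finitely many such contributions and then over $j = 1-r,\dots,P$ produces the claimed trace estimate with a constant $C_P$ proportional to $P+r$.

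The main obstacle, as signaled in the excerpt, is the analysis at the critical wave-numbers: one must verify that the local spectral decomposition (Assumption \ref{assumption2}) glues cleanly to the local inversion of the dispersion relation (Assumption \ref{assumption3})---which would fail precisely at glancing points $\zeta'_k(\underline{\kappa}) = 0$---and that the Cauchy integrals arising after the change of variables are uniformly bounded in $\gamma \to 0^+$. All remaining ingredients (Plancherel, the geometric-series identity for the resolvent, and the $L^2$ boundedness of the Cauchy kernel on $\cercle$) are by now standard.
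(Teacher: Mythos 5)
Your overall architecture (spatial Fourier transform, $z$-transform in time, Parseval on the circles $|z|=e^\gamma$, and a local spectral decomposition of $(zI-{\mathcal A}(e^{i\eta}))^{-1}$ near critical wave-numbers) is a legitimate alternative to the paper's route, which instead Laplace-transforms in time and works with the companion matrix $\M(z)$ of \eqref{defM}, so that the trace appears through the formula \eqref{formuletrace} and the block reduction of Theorem \ref{thmjfc1}. The algebra in your reduction (the identity for $\Lambda_j$, the transient term $R_j$, the weighted Parseval identity) is correct. But the decisive analytic step is wrong as stated: you claim that Assumption \ref{assumption3} makes $\eta\mapsto\zeta_k(e^{i\eta})$ a diffeomorphism onto \emph{an arc of $\cercle$ through $\underline z$}, and you then invoke the classical Hardy-space bound for Cauchy integrals of $L^2$ densities on the unit circle. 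Assumption \ref{assumption1} (von Neumann) only gives $|\zeta_k(e^{i\eta})|\le 1$: except in the purely conservative case, the spectral curve leaves the unit circle and is tangent to it from inside at $\underline z$, typically with a finite-order contact $\text{\rm Re}\,\xi(i\theta)\ge c\,\theta^{2k}$ (this is exactly what happens for the upwind, Lax--Friedrichs and Lax--Wendroff schemes, which are the motivating examples). For such a curve the singular term is a Cauchy-type integral over a curve inside $\overline{\D}$ evaluated on $|z|=e^\gamma$, and the "classical" $H^2$ estimate no longer applies; a naive size estimate (e.g.\ Schur test on the kernel $1/(\gamma+c\,\eta^{2k}+i(\tau-a\,\eta))$) produces logarithmic divergences as $\gamma\to 0$, so the uniform-in-$\gamma$ bound requires the cancellation structure of the Cauchy kernel near a tangential contact. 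This is precisely the content of the paper's key technical step — Carlson's Lemma combined with the uniform total-variation-of-argument bound of Lemma \ref{lemTVB}, whose Case 2 (the dissipative, tangential case) is exactly what is missing from your argument (and from \cite{sarason1}, which only treats the conservative case). Your proposal offers no substitute for it, so the proof has a genuine gap at its crux.

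A secondary, fixable flaw: your partition of unity "separating the critical wave-numbers from the rest" presumes that the set $\{\eta:\ \text{\rm sp}({\mathcal A}(e^{i\eta}))\cap\cercle\neq\emptyset\}$ is small (say finite). For non-dissipative schemes every $\eta$ is critical, so this set can be all of $[0,2\pi]$. The remedy is standard — cover the whole $\eta$-circle by finitely many neighborhoods and, in each, split $\text{\rm sp}({\mathcal A}(e^{i\eta}))$ into the group of eigenvalues near $\cercle$ (treated via Assumptions \ref{assumption2}--\ref{assumption3}) and a uniformly contracting remainder, with an additional localization in the evaluation variable $z$ — but as written the localization scheme is not well-founded. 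If you repair these two points (chiefly the first), your Fourier-side route would give a genuinely different proof of Proposition \ref{prop1}, and would even dispense with the support restriction on the initial data; as it stands, the uniform bound on the tangential Cauchy integral is assumed rather than proved.
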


\noindent Proposition \ref{prop1} clearly implies the validity of \eqref{estimlem1} by passing to the limit 
$\gamma \rightarrow 0$, and therefore the validity of Theorem \ref{thm1}. We thus now focus on the 
proof of Proposition \ref{prop1}, for which we first recall some fundamental properties of the matrix 
$\M(z)$ in \eqref{defM}.

\subsection{A brief reminder on the normal modes analysis}

The main result of \cite{jfc1} can be stated as follows.

\begin{theorem}[Block reduction of $\M$]
\label{thmjfc1}
Let Assumptions \ref{assumption1}, \ref{assumption2}, \ref{assumption3} and \ref{assumption4} be satisfied. 
Then for all $z \in \U$, the matrix $\M(z)$ in \eqref{defM} has $N\, r$ eigenvalues, counted with their multiplicity, 
in $\D \setminus \{ 0\}$, and $N\, p$ eigenvalues, counted with their multiplicity, in $\U$. We let $\E^s(z)$, resp. 
$\E^u(z)$, denote the $N\, r$-dimensional, resp. $N\, p$-dimensional, generalized eigenspace associated with 
those eigenvalues that lie in $\D \setminus \{ 0\}$, resp. $\U$.

Furthermore, for all $\underline{z} \in \Ubar$, there exists an open neighborhood ${\mathcal O}$ of $\underline{z}$ 
in $\C$, and there exists an invertible matrix $T(z)$ that is holomorphic with respect to $z \in {\mathcal O}$ such that:
\begin{equation*}
\forall \, z \in {\mathcal O} \, ,\quad T(z)^{-1}\, \M(z) \, T(z) =\begin{pmatrix}
\M_1(z) & & 0 \\
& \ddots & \\
0 & & \M_L(z) \end{pmatrix} \, ,
\end{equation*}
where the number $L$ of diagonal blocks and the size $\nu_\ell$ of each block $\M_\ell$ do not depend 
on $z\in {\mathcal O}$, and where each block satisfies one of the following three properties:
   \begin{itemize}
      \item  there exists $\delta>0$ such that for all $z \in {\mathcal O}$, $\M_\ell(z)^* \, \M_\ell(z) \ge 
             (1+\delta) \, I$,
      \item  there exists $\delta>0$ such that for all $z\in{\mathcal O}$, $\M_\ell(z)^* \, \M_\ell(z) \le 
             (1-\delta) \, I$,
      \item  $\nu_\ell=1$, $\underline{z}$ and $\M_\ell(\underline{z})$ belong to $\cercle$, and 
             $\underline{z} \, \M_\ell'(\underline{z}) \, \overline{\M_\ell(\underline{z})} \in \R 
             \setminus \{ 0 \}$.
   \end{itemize}
We refer to the blocks $\M_\ell$ as being of the first, second or third type.
\end{theorem}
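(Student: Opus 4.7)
My plan is to combine a Kreiss--Hersh type eigenvalue count with a holomorphic block diagonalisation that exploits Assumptions \ref{assumption2}--\ref{assumption3}. The key preliminary is a duality between $\M(z)$ and $\mathcal{A}(\kappa)$: from the block companion structure of both matrices one verifies
\[
\det\big(\kappa\,I-\M(z)\big) \;=\; \det(\A_p(z))^{-1}\,\det\Big(\sum_{\ell=-r}^p \kappa^{\ell+r}\,\A_\ell(z)\Big),
\]
and, by the explicit form of $\A_\ell$ versus $\widehat{Q_\sigma}$, the right-hand side vanishes precisely when $z\in\mathrm{sp}(\mathcal A(\kappa))$. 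Moreover, to any eigenvector $e$ of $\mathcal A(\kappa)$ for $z$ one can associate an eigenvector of $\M(z)$ for $\kappa$ by a Vandermonde rescaling involving powers of $\kappa$ and the first $\C^N$-block $e_0$ of $e$ (characterised by $(\sum_\ell \kappa^\ell\,\A_\ell(z))\,e_0 = 0$).

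\emph{Step 1 (eigenvalue count).} For $z\in\U$, Assumption \ref{assumption1} (von Neumann) forces $\mathrm{sp}(\mathcal A(\kappa))\subset\Ubar$ for every $\kappa\in\cercle$, so by the duality $\M(z)$ has no eigenvalue on $\cercle$ when $z\in\U$. Since $\U$ is connected and the spectrum of $\M(z)$ depends continuously on $z$ without crossing $\cercle$, the numbers of eigenvalues of $\M(z)$ in $\D\setminus\{0\}$ and in $\U$ are constant on $\U$. Reading off the highest and lowest $\kappa$-degree coefficients of the polynomial above --- both non-degenerate by Assumption \ref{assumption4}, since $\det(\A_p(z))\neq0$ and the constant term equals $\det(\A_{-r}(z))\neq0$, hence $\kappa=0$ is never a root --- pins the two counts down as $Np$ and $Nr$.

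\emph{Step 2 (block diagonalisation).} For $\underline z\in\U$, $\mathrm{sp}(\M(\underline z))$ is disjoint from $\cercle$; the Dunford--Riesz spectral projector associated with the partition into the pieces inside and outside $\cercle$ depends holomorphically on $z$, and a Lyapunov-type change of basis on each piece converts them into blocks of the second type ($\M_\ell^*\M_\ell\le(1-\delta)I$) and first type ($\M_\ell^*\M_\ell\ge(1+\delta)I$). When $\underline z\in\cercle$, the same projector still isolates the strictly stable and strictly unstable parts (by picking a neighbourhood small enough for the separation to persist), producing first- and second-type blocks, and it remains to analyse the middle piece carrying those eigenvalues of $\M(\underline z)$ that lie on $\cercle$.

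\emph{Main obstacle --- third-type blocks.} This is where both Assumption \ref{assumption2} and Assumption \ref{assumption3} are genuinely needed. Fix $\underline{\kappa}\in\cercle\cap\mathrm{sp}(\M(\underline z))$; by the duality, $\underline z\in\cercle\cap\mathrm{sp}(\mathcal A(\underline\kappa))$, so Assumption \ref{assumption2} supplies holomorphic eigenbranches $\zeta_k(\kappa)$ and eigenvectors $e_k(\kappa)$ of $\mathcal A$ near $\underline\kappa$ with $\zeta_k(\underline\kappa)=\underline z$. Assumption \ref{assumption3} ($\zeta_k'(\underline\kappa)\neq 0$) then allows the holomorphic implicit function theorem to invert each $\zeta_k$ into $\kappa_k(z)$ near $\underline z$ with $\kappa_k(\underline z)=\underline\kappa$; together with the Vandermonde formula, the vectors arising from $e_k(\kappa_k(z))$ produce a holomorphic basis of the corresponding invariant subspace and split the middle block into $\underline\alpha$ scalar blocks $\M_\ell(z)=\kappa_k(z)$, forcing $\nu_\ell=1$. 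The reality of $\underline z\,\M_\ell'(\underline z)\,\overline{\M_\ell(\underline z)}$ follows by differentiating $\eta\mapsto|\zeta_k(e^{i\eta})|^2$: von Neumann yields $|\zeta_k(e^{i\eta})|\le 1$ with equality at $\eta=\underline\eta$ (where $\underline\kappa=e^{i\underline\eta}$), so
\[
0 \;=\; \frac{d}{d\eta}\,|\zeta_k(e^{i\eta})|^2\Big|_{\eta=\underline\eta} \;=\; 2\,\mathrm{Re}\!\left(\overline{\underline z}\,\zeta_k'(\underline\kappa)\,i\,\underline\kappa\right),
\]
which gives $\overline{\underline z}\,\underline\kappa\,\zeta_k'(\underline\kappa)\in\R$; using $\M_\ell'(\underline z)=1/\zeta_k'(\underline\kappa)$ and $\M_\ell(\underline z)=\underline\kappa$, this rearranges exactly to $\underline z\,\M_\ell'(\underline z)\,\overline{\M_\ell(\underline z)}\in\R$, and the nonvanishing is immediate from $\zeta_k'(\underline\kappa)\neq 0$. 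The delicate point is thus the interplay of geometric regularity (to obtain holomorphic branches at a spectral crossing of $\mathcal{A}$) with non-glancing (to invert them to holomorphic branches of $\M$ and secure both $\nu_\ell=1$ and the sign condition).
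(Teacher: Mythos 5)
A preliminary remark: the paper itself offers no proof of Theorem \ref{thmjfc1} — it is quoted from \cite{jfc1}, with the non-glancing Assumption \ref{assumption3} serving only to exclude the ``fourth type'' blocks of that reference. So there is nothing in the paper to compare you with line by line; judged on its own, your plan reconstructs the standard argument (duality between $\M(z)$ and ${\mathcal A}(\kappa)$ through the companion-matrix determinant identity, Hersh-type absence of eigenvalues on $\cercle$ for $|z|>1$ via von Neumann, Dunford--Riesz projectors and a Lyapunov-type conjugation for the elliptic part, holomorphic inversion of the branches $\zeta_k$ for the central part), and your computation showing $\underline{z}\,\M_\ell'(\underline{z})\,\overline{\M_\ell(\underline{z})}\in\R\setminus\{0\}$ by differentiating $\eta\mapsto|\zeta_k({\rm e}^{i\eta})|^2$ at its maximum is correct.

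Two steps are genuinely under-justified as written. First, the eigenvalue count: the nonvanishing of the leading and constant $\kappa$-coefficients of $\det\big(\sum_{\ell}\kappa^{\ell+r}\A_\ell(z)\big)$ only tells you that $\M(z)$ has $N(p+r)$ nonzero eigenvalues, none on $\cercle$; it does not determine the split between $\D$ and $\U$. You need an additional argument, for instance letting $|z|\to\infty$, where $\A_\ell(z)\to\delta_{\ell 0}I$ so the polynomial converges to $\kappa^{Nr}$ uniformly on $\overline{\D}$, and Rouch\'e on $\cercle$ gives exactly $Nr$ roots in $\D$ for large $|z|$; the constancy of the count on the connected set $\U$ then yields $Nr$ and $Np$ everywhere. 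Second, the central block: you assert that the lifted eigenvectors built from $e_k(\kappa_k(z))$ form a holomorphic \emph{basis} of the invariant subspace of $\M(z)$ attached to the eigenvalues near $\underline{\kappa}$. This needs the fact that this subspace has dimension exactly $\underline{\alpha}$, i.e.\ that $\underline{\kappa}$ has algebraic multiplicity $\underline{\alpha}$ as an eigenvalue of $\M(\underline{z})$; this is precisely where Assumption \ref{assumption3} acts, since in your determinant factorization each factor $z-\zeta_k(\kappa)$ contributes a simple zero in $\kappa$ only because $\zeta_k'(\underline{\kappa})\neq 0$ (at a glancing point the $\kappa$-multiplicity exceeds the number of eigenvector branches and a fourth-type block appears). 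You should also check linear independence of the lifted vectors at $z=\underline{z}$: since all branches share the same $\underline{\kappa}$ and $\underline{z}$, the two Vandermonde structures reduce this to independence of the first $\C^N$-blocks of the $e_k(\underline{\kappa})$, which does follow from Assumption \ref{assumption2}. With these two points made explicit, your outline is a sound proof and is, in substance, the argument of \cite{jfc1} specialized to the non-glancing case.
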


Observe that Assumption \ref{assumption4} precludes the occurrence of blocks of the fourth type in the terminology 
of \cite{jfc1}, because such blocks only arise when glancing modes are present. In our framework, we shall only deal 
with elliptic blocks (first or second type) or scalar blocks. The latter correspond to eigenvalues of $\M(z)$ that depend 
holomorphically on $z$.

\subsection{Proof of the trace estimate for the Cauchy problem}

\subsubsection{The resolvent equation}

As already seen in the proof of Lemma \ref{lem1}, the solution $(V_j^n)$ to the Cauchy problem 
\eqref{schemeaux1} satisfies
\begin{equation}
\label{estim1}
\dfrac{\gamma}{\gamma +1} \, \sum_{n \ge 0} \sum_{j \in \Z} \Delta x \, {\rm e}^{-2\, \gamma \, n} \, |V_j^n|^2 
\le C \, \sum_{n=0}^s \, \sum_{j \ge 1-r} \Delta x \, |f_j^n|^2 \, ,
\end{equation}
for all $\gamma>0$. The estimate \eqref{estim1} shows that, for all $j \in \Z$, we can define the Laplace transform 
of the step function
\begin{equation*}
V_j(t) :=\begin{cases}
0 & \text{if } t<0 \, ,\\
V_j^n & \text{if } t\in [n,n+1[ \, , \, n \in \N \, .
\end{cases}
\end{equation*}
The Laplace transform $\widehat{V}_j$ is holomorphic in the right half-plane $\{ \text{\rm Re} \, \tau>0 \}$ 
for all $j \in \Z$, and Plancherel Theorem gives
\begin{equation*}
\forall \, \gamma >0 \, ,\quad \sum_{j \in \Z} \int_\R |\widehat{V}_j (\gamma+i\, \theta)|^2 \, {\rm d}\theta 
<+\infty \, .
\end{equation*}
In particular, for all $\gamma>0$, the sequence $\big( \widehat{V}_j (\gamma+i\, \theta) \big)_{j \in \Z}$ belongs 
to $\ell^2$ for almost every $\theta \in \R$.

Applying the Laplace transform to \eqref{schemeaux1} yields the resolvent equation on $\Z$:
\begin{equation}
\label{resolventZ1}
\forall \, j \in \Z \, ,\quad 
\widehat{V}_j(\tau) -{\dps \sum_{\sigma=0}^s} z^{-\sigma-1} \, Q_\sigma \, \widehat{V}_j(\tau) 
=F_j(\tau) \, ,
\end{equation}
where the source term $F_j$ is defined by
\begin{equation}
\label{defsourceFj}
\forall \, j \in \Z \, ,\quad 
F_j(\tau) :=\dfrac{1-z^{-1}}{\tau} \left\{ \sum_{n=0}^s z^{-n} \, f_j^n -\sum_{\ell=-r}^p \sum_{\sigma=0}^{s-1} 
\sum_{n=0}^{s-\sigma-1} z^{-n-\sigma-1} \, A_{\ell,\sigma} \, f_{j+\ell}^n \right\} \, ,
\end{equation}
and it is understood, as always in what follows, that $\tau$ is a complex number of positive real part $\gamma$, 
and $z:={\rm e}^\tau \in \U$. In \eqref{defsourceFj}, we use the convention $f_j^n=0$ if $j \le -r$. Using the matrix 
$\M(z)$ that has been defined in \eqref{defM}, we can rewrite \eqref{resolventZ1} as
\begin{equation}
\label{resolventZ2}
\forall \, j \in \Z \, , \, {\mathcal W}_{j+1}(\tau) = \M (z) \, {\mathcal W}_j (\tau) +{\mathcal F}_j(\tau)\, ,\quad 
{\mathcal W}_j (\tau) := \begin{pmatrix}
\widehat{V}_{j+p-1}(\tau) \\
\vdots \\
\widehat{V}_{j-r}(\tau) \end{pmatrix} , \, {\mathcal F}_j(\tau) :=\begin{pmatrix}
\A_p (z)^{-1} \, F_j(\tau) \\
 \\
0 \end{pmatrix} \, .
\end{equation}
Our goal now is to estimate the term ${\mathcal W}_{1-p-r}(\tau)$ of the solution $({\mathcal W}_j)$ to 
\eqref{resolventZ2}, and then to estimate finitely many ${\mathcal W}_\nu(\tau)$, $\nu \ge 1-p-r$.

\subsubsection{Estimates for $\gamma$ small}

In what follows, we always use the notation $\tau= \gamma +i\, \theta$, and we recall the notation $z:={\rm e}^\tau$. 
The source term ${\mathcal F}_j$ in \eqref{resolventZ2} is given in terms of $F_j$, whose expression is given 
in \eqref{defsourceFj}. The initial data $(f_j^0),\dots,(f_j^s)$ in \eqref{schemeaux1} vanish for $j \le -r$, and so 
therefore do $F_j$ and ${\mathcal F}_j$ for $j \le -p-r$ (and even for $j \le -r$ if $s=0$). This means that for all 
$j \le -p-r$, the sequence $({\mathcal W}_j)$ satisfies
\begin{equation*}
{\mathcal W}_{j+1}(\tau) = \M (z) \, {\mathcal W}_j (\tau) \, ,
\end{equation*}
and we know moreover that for all $\gamma>0$, the sequence $({\mathcal W}_j(\gamma+i\, \theta))_{j \in \Z}$ 
belongs to $\ell^2$ for almost every $\theta \in \R$. Applying Theorem \ref{thmjfc1}, this means that the vector 
${\mathcal W}_{1-p-r}(\tau)$ belongs to $\E^u(z)$ for almost every $\theta \in \R$.

Let us introduce the projectors $\Pi^s(z),\Pi^u(z)$ associated with the decomposition
\begin{equation*}
\C^{N\, (p+r)} =\E^s(z) \oplus \E^u(z) \, .
\end{equation*}
Using the exponential decay of $\M(z)^{-k} \, \Pi^u(z)$ as $k$ tends to infinity, the induction relation \eqref{resolventZ2} 
gives for almost every $\theta \in \R$:
\begin{equation}
\label{formuletrace}
{\mathcal W}_{1-p-r}(\tau) =\Pi^u(z) \, {\mathcal W}_{1-p-r}(\tau)
=-\sum_{j \ge 0} \M(z)^{-1-j} \, \Pi^u(z) \, {\mathcal F}_{1-p-r+j}(\tau) \, .
\end{equation}

We now focus on formula \eqref{formuletrace} and its consequences for small values of $\gamma$. More 
precisely, we consider a point $\underline{z}$ of the unit circle $\cercle$ and apply Theorem \ref{thmjfc1}. 
Let us introduce neighborhoods of the form as depicted in Figure \ref{fig1}:
\begin{equation*}
\forall \, \eps>0 \, ,\quad {\mathcal V}_{\underline{z},\eps} :=\Big\{ \underline{z} \, {\rm e}^{\alpha +i\, \beta} \, , \, 
\alpha, \beta \in \, ]-\eps,\eps[ \Big\} \, .
\end{equation*}
According to Theorem \ref{thmjfc1}, there exists some $\eps>0$ such that on ${\mathcal V}_{\underline{z},\eps}$, 
there is a holomorphic change of basis $T(z)$ that block-diagonalizes $\M(z)$, with blocks satisfying one of 
the properties stated in Theorem \ref{thmjfc1}. There is no loss of generality in assuming that blocks $\M_\ell(z)$ 
of the third type, which correspond to eigenvalues of $\M(z)$, can further be written as
\begin{equation}
\label{logMl}
\M_\ell(z) ={\rm e}^{\xi_\ell(z)} \, ,\quad \xi_\ell(\underline{z}) \in i \, \R \, ,\quad 
\underline{z} \, \xi_\ell'(\underline{z}) \in \R \setminus \{ 0\} \, ,
\end{equation}
where $\xi_\ell$ is holomorphic on ${\mathcal V}_{\underline{z},\eps}$ and 
\begin{equation*}
\forall \, z \in {\mathcal V}_{\underline{z},\eps} \, ,\quad |\text{\rm Re} \, (z\, \xi_\ell'(z))| \ge 
\dfrac{1}{2} \, |\xi_\ell'(\underline{z})|>0 \, .
\end{equation*}
In particular, $|\xi_\ell'|$ is uniformly bounded from below by a positive constant on ${\mathcal V}_{\underline{z},\eps}$. 
We can further assume that $T(z)$ and its inverse are uniformly bounded on ${\mathcal V}_{\underline{z},\eps}$.

\begin{remark}
Since $\M_\ell(z)$ is an eigenvalue of $\M(z)$, there holds $\xi_\ell(z) \not \in i \, \R$ for $z \in 
{\mathcal V}_{\underline{z},\eps} \cap \U$. More precisely, the $\xi_\ell(z)$'s of positive real part correspond 
to eigenvalues of $\M(z)$ in $\U$ (the unstable ones), and those of negative real part correspond to eigenvalues 
in $\D$ (the stable ones).
\end{remark}

Our goal is to derive a bound of the form
\begin{equation}
\label{estim2}
\int_{\mathcal I} |{\mathcal W}_{1-p-r}(\tau)|^2 \, {\rm d}\theta \le C \, \sum_{n=0}^s \, \sum_{j \ge 1-r} |f_j^n|^2 \, ,
\end{equation}
uniformly with respect to $\gamma \in \, ]0,\eps[$, where ${\mathcal I}$ denotes the set\footnote{Observe that 
the form of the neighborhood ${\mathcal V}_{\underline{z},\eps}$ implies that ${\mathcal I}$ is independent of 
$\gamma$, which is the reason for introducing such neighborhoods.}:
\begin{equation}
\label{formI1}
{\mathcal I}:= \{ \theta \in \R \, / \, {\rm e}^\tau \in {\mathcal V}_{\underline{z},\eps} \} 
=\cup_{k\in \Z} \, ]\underline{\theta}+2\, k \, \pi -\eps,\underline{\theta}+2\, k \, \pi +\eps[ \, ,\quad 
\underline{z}={\rm e}^{i\, \underline{\theta}} \, .
\end{equation}

\begin{figure}[t]
\begin{center}
\begin{tikzpicture}[scale=5]
\draw [domain=0.6:1.2,samples=30,smooth,dotted] plot ({(1/9)*cos(\x r)},{(1/9)*sin(\x r)});
\draw [domain=0.6:1.2,samples=30,smooth,dotted] plot ({(8/9)*cos(\x r)},{(8/9)*sin(\x r)});
\draw [domain=0.55:1.25,samples=30,smooth,thick] plot ({cos(\x r)},{sin(\x r)});
\draw [domain=0.6:1.2,samples=30,smooth,dotted] plot ({(11/9)*cos(\x r)},{(11/9)*sin(\x r)});
\draw[->] (0,0) -- (0,1);
\draw[->] (0,0) -- (1,0);
\draw [dotted] ({0*cos(0.6 r)},{0*sin(0.6 r)}) -- ({(11/9)*cos(0.6 r)},{(11/9)*sin(0.6 r)});
\draw [dotted] ({0*cos(1.2 r)},{0*sin(1.2 r)}) -- ({(11/9)*cos(1.2 r)},{(11/9)*sin(1.2 r)});
\draw ({cos(0.9 r)},{sin(0.9 r)}) circle (0.01cm);
\draw[<->,dotted] ({(8/9)*cos(1.25 r)},{(8/9)*sin(1.25 r)}) -- ({(11/9)*cos(1.25 r)},{(11/9)*sin(1.25 r)});
\node at ({0.18*cos(0.9 r)},{0.18*sin(0.9 r)}) {$2 \, \eps$};
\node at ({1.05*cos(1.42 r)},{1.05*sin(1.42 r)}) {$2 \, \sinh \eps$};
\node at ({cos(0.5 r)},{sin(0.5 r)}) {$\cercle$};
\node at ({cos(0.9 r)+0.07},{sin(0.9 r)}) {$\underline{z}$};
\node at ({1.1},0) {Re $z$};
\node at ({-0.1},{1}) {Im $z$};
\end{tikzpicture}
\end{center}
\caption{The neighborhood ${\mathcal V}_{\underline{z},\eps}$.}
\label{fig1}
\end{figure}

Let $\gamma \in \, ]0,\eps[$ be fixed. For almost every $\theta \in {\mathcal I}$, the vector ${\mathcal W}_{1-p-r} 
(\tau)$ is given by \eqref{formuletrace}, and we can diagonalize $\M(z)$ with the matrix $T(z)$. In order to 
cover all possible cases\footnote{If one type of block is not present in the reduction close to $\underline{z}$, 
the proof of \eqref{estim2} simplifies accordingly.}, we assume that the block diagonalization of $\M(z)$ reads
\begin{equation*}
T(z)^{-1} \, \M(z) \, T(z) =\text{\rm diag } (\M_\sharp(z),\M_\flat(z),\M_1^+(z),\dots,\M_{L^+}^+(z), 
\M_1^-(z),\dots,\M_{L^-}^-(z)) \, ,
\end{equation*}
where $\M_\sharp(z)$ is a block of the first type, $\M_\flat(z)$ is a block of the second type, and all other blocks are 
(scalars) of the third type with
\begin{align*}
&\forall \, \ell=1,\dots,L^+ \, ,\quad \M_\ell^+(\underline{z}) \in \cercle \, ,\quad \overline{\M_\ell^+(\underline{z})} 
\, \underline{z} \, (\M_\ell^+)'(\underline{z}) \in \R_+^* \, ,\\
&\forall \, \ell=1,\dots,L^- \, ,\quad \M_\ell^-(\underline{z}) \in \cercle \, ,\quad \overline{\M_\ell^-(\underline{z})} 
\, \underline{z} \, (\M_\ell^-)'(\underline{z}) \in \R_-^* \, .
\end{align*}
Then the generalized eigenspace $\E^u(z)$ is spanned by those column vectors of $T(z)$ which correspond 
to the blocks $\M_\sharp,\M_1^+,\dots,\M_{L^+}^+$, while the generalized eigenspace $\E^s(z)$ is spanned 
by those column vectors of $T(z)$ which correspond to the blocks $\M_\flat,\M_1^-,\dots,\M_{L^-}^-$, see, e.g., 
\cite[Lemma 3.3]{trefethen3} or \cite{jfc1}. An easy corollary of this "decoupling" property is that both projectors 
$\Pi^s,\Pi^u$ extend holomorphically to ${\mathcal V}_{\underline{z},\eps}$ and are bounded. We can even 
decompose $\Pi^u(z)$ as
\begin{equation*}
\Pi^u(z) =\Pi_\sharp (z)+\sum_{\ell=1}^{L^+} \Pi_\ell^+ (z) \, ,
\end{equation*}
with self-explanatory notation. For almost every $\theta \in {\mathcal I}$, the formula \eqref{formuletrace} 
then reads
\begin{align}
& \Pi_\sharp (z) \, {\mathcal W}_{1-p-r}(\tau)
=-\sum_{j \ge 0} \M(z)^{-1-j} \, \Pi_\sharp (z) \, {\mathcal F}_{1-p-r+j}(\tau) \, ,\label{formuletrace1}Ê\\
&\Pi_\ell^+ (z) \, {\mathcal W}_{1-p-r}(\tau) =-\sum_{j \ge 0} {\rm e}^{-(1+j) \, \xi_\ell^+(z)} \, \Pi_\ell^+ (z) \, 
{\mathcal F}_{1-p-r+j}(\tau) \, .\label{formuletrace2}
\end{align}

The norm of $\M(z)^{-1-j} \, \Pi_\sharp (z)$ decays exponentially with $j$, uniformly with respect to $z \in 
{\mathcal V}_{\underline{z},\eps}$, because $\M_\sharp$ is a block of the first type. Hence \eqref{formuletrace1} 
implies, with a constant $C$ that is uniform with respect to $\gamma$ and $\theta \in {\mathcal I}$:
\begin{equation*}
|\Pi_\sharp (z) \, {\mathcal W}_{1-p-r}(\tau)|^2 \le C \, \sum_{j \ge 0} |{\mathcal F}_{1-p-r+j}(\tau)|^2 \, .
\end{equation*}
We then use the definitions \eqref{resolventZ2} and \eqref{defsourceFj} to derive
\begin{equation*}
|\Pi_\sharp (z) \, {\mathcal W}_{1-p-r}(\tau)|^2 \le C \, \dfrac{|1-z^{-1}|^2}{|\tau|^2} \, 
\sum_{n=0}^s \, \sum_{j \ge 1-r} |f_j^n|^2 \, .
\end{equation*}
We end up with the estimate of the elliptic part of ${\mathcal W}_{1-p-r}$:
\begin{align}
\int_{\mathcal I} |\Pi_\sharp (z) \, {\mathcal W}_{1-p-r}(\tau)|^2 \, {\rm d}\theta &\le C \, \int_{\R}
\dfrac{|1-{\rm e}^{-\gamma-i\, \theta}|^2}{\gamma^2+\theta^2} \, {\rm d}\theta \, 
\sum_{n=0}^s \, \sum_{j \ge 1-r} |f_j^n|^2 \notag \\
&\le CÊ\, \dfrac{1-{\rm e}^{-2\, \gamma}}{\gamma} \, \sum_{n=0}^s \, \sum_{j \ge 1-r} |f_j^n|^2 
\le C \, \sum_{n=0}^s \, \sum_{j \ge 1-r} |f_j^n|^2 \, .\label{estim4}
\end{align}

We now turn to the hyperbolic components $\Pi_\ell^+ (z) \, {\mathcal W}_{1-p-r}(\tau)$, whose analysis 
relies on arguments that are similar to those in \cite{sarason1}. Since $\Pi_\ell^+ (z)$ projects on a 
one-dimensional vector space, we can rewrite \eqref{formuletrace2} as
\begin{equation*}
\Pi_\ell^+ (z) \, {\mathcal W}_{1-p-r}(\tau) =-\sum_{j \ge 0} {\rm e}^{-(1+j) \, \xi_\ell^+(z)} \, (L_\ell (z) \, 
{\mathcal F}_{1-p-r+j}(\tau)) \, T_\ell(z) \, ,
\end{equation*}
where $L_\ell$ is a row vector that depends holomorphically on $z$, and $T_\ell(z)$ is a column vector of $T(z)$. 
Using the expression \eqref{resolventZ2} of ${\mathcal F}_{1-p-r+j}(\tau)$, we find that, up to multiplying by harmless 
bounded functions of $z$, $\Pi_\ell^+ (z) \, {\mathcal W}_{1-p-r}(\tau)$ reads as a linear combination of the $s+1$ 
functions
\begin{equation*}
\dfrac{1-z^{-1}}{\tau} \, \sum_{j \ge 0} {\rm e}^{-j\, \xi_\ell^+(z)} \, f_{1-r+j}^n \, ,\quad n=0,\dots,s \, ,
\end{equation*}
which coincide, up to multiplying by harmless bounded functions of $z$, with:
\begin{equation*}
\dfrac{1-z^{-1}}{\tau} \, \F^n(\xi_\ell^+(z)) \, ,\quad n=0,\dots,s \, ,
\end{equation*}
where $\F^n$ denotes the Laplace transform of the initial condition
\begin{equation*}
f^n(x) :=\begin{cases}
f_{1-r+j}^n \, ,& x \in [j,j+1[ \, ,Ê\, j \in \N \, ,\\
0 \, ,& \text{\rm otherwise.}
\end{cases}
\end{equation*}
Recall that $\xi_\ell^+(z)$ has positive real part for $\gamma>0$, so the Laplace transform $\F^n$ is 
well-defined at $\xi_\ell^+(z)$. At this stage, the decomposition of $\Pi_\ell^+ (z) \, {\mathcal W}_{1-p-r}(\tau)$ 
implies the uniform bound
\begin{equation}
\label{estim5}
\int_{\mathcal I} |\Pi_\ell^+ (z) \, {\mathcal W}_{1-p-r}(\tau)|^2 \, {\rm d}\theta \le C \, \sum_{n=0}^s 
\int_{\mathcal I} \dfrac{|1-{\rm e}^{-\gamma-i\, \theta}|^2}{\gamma^2 +\theta^2} \, |\F^n(\xi_\ell^+(z))|^2 
\, {\rm d}\theta \, .
\end{equation}
We first simplify \eqref{estim5} by observing that $\theta$ enters the integrand on the right hand-side only 
through ${\rm e}^{i\, \theta}$ but at one place, which is the $1/(\gamma^2 +\theta^2)$ factor. The form 
\eqref{formI1} of ${\mathcal I}$ and some straightforward changes of variable turn \eqref{estim5} into
\begin{equation*}
\int_{\mathcal I} |\Pi_\ell^+ (z) \, {\mathcal W}_{1-p-r}(\tau)|^2 \, {\rm d}\theta \le C \, \sum_{n=0}^s 
\int_{\underline{\theta}-\eps}^{\underline{\theta}+\eps} |\F^n(\xi_\ell^+(z))|^2 \, {\rm d}\theta \, ,
\end{equation*}
with a constant $C$ that is still uniform with respect to $\gamma$. Because $|\xi_\ell'|$ is uniformly bounded 
away from zero on ${\mathcal V}_{\underline{z},\eps}$, we obtain
\begin{equation}
\label{estim6}
\int_{\mathcal I} |\Pi_\ell^+ (z) \, {\mathcal W}_{1-p-r}(\tau)|^2 \, {\rm d}\theta \le C \, \sum_{n=0}^s 
\int_{\underline{\theta}-\eps}^{\underline{\theta}+\eps} |\F^n(\xi_\ell^+(z))|^2 \, |i\, z \, (\xi_\ell^+)'(z)| \, {\rm d}\theta 
=C \, \sum_{n=0}^s \int_{{\mathcal C}_{\ell,\gamma}} |\F^n(z)|^2 \, |{\rm d}z| \, , 
\end{equation}
where ${\mathcal C}_{\ell,\gamma}$ denotes the (analytic) curve
\begin{equation}
\label{defClgamma}
{\mathcal C}_{\ell,\gamma} := \Big\{ \xi_\ell^+ \big( \underline{z} \, {\rm e}^{\gamma +i\, \theta} \big) \, ,Ê\, 
\theta \in \, ]-\eps,\eps[ \, \Big\} \, .
\end{equation}
The argument now relies on Carlson's Lemma \cite{carlson}, which gives a bound for curvilinear integrals 
of Laplace transforms in terms of the $L^2$ norm of the original function. More precisely, there holds
\begin{equation*}
\int_{{\mathcal C}_{\ell,\gamma}} |\F^n(z)|^2 \, |{\rm d}z| \le \dfrac{1}{\pi} \, 
\int_{i\, \R} |\F^n(w)|^2 \, A_\ell(\gamma,w) \, |{\rm d}w| \, , 
\end{equation*}
where $A_\ell(\gamma,w)$ denotes the total variation of the argument of $z-w$ as $z$ runs through the curve 
${\mathcal C}_{\ell,\gamma}$. In particular, if we can prove a uniform bound of the type
\begin{equation*}
\sup_{\gamma \in \, ]0,\eps[} \, \sup_{w \in i\, \R} A_\ell(\gamma,w) <+\infty \, ,
\end{equation*}
then we shall obtain from \eqref{estim6} and Carlson's Lemma the uniform bound
\begin{equation}
\label{estim7}
\int_{\mathcal I} |\Pi_\ell^+ (z) \, {\mathcal W}_{1-p-r}(\tau)|^2 \, {\rm d}\theta \le C \, \sum_{n=0}^s 
\sum_{j \ge 1-r} |f_j^n|^2 \, , 
\end{equation}
and the combination of \eqref{estim4} and \eqref{estim7} will yield \eqref{estim2}.

\subsubsection{Bounding the total variation of the argument}

The goal of this paragraph is to prove the following technical Lemma on families of analytic curves such as 
the ${\mathcal C}_{\ell,\gamma}$'s in \eqref{defClgamma}. We give a complete proof of this fact since the 
details in \cite{sarason1} are omitted and we consider an even more general situation than the corresponding 
one in \cite{sarason1}.

\begin{lemma}
\label{lemTVB}
Let $\eps>0$, and let $f$ be holomorphic on $]-\eps,\eps[^2 \subset \C$ with:
\begin{itemize}
 \item $f(0)=0$, $f'(0) \in \R^*_+$,
 
 \item for all $(\gamma,\theta) \in \, ]0,\eps[ \, \times \, ]-\eps,\eps[$, $f(\gamma+i\, \theta)$ has positive 
 real part.
\end{itemize}
For $w \in \R$ and $(\gamma,\theta) \in \, ]0,\eps[ \, \times \, ]-\eps,\eps[$, let $v(\gamma,\theta,w) \in \, 
]-\pi/2,\pi/2[$ denote the argument of $f(\gamma+i\, \theta)-i\, w$. Then, up to shrinking $\eps$, there 
exists a constant $C>0$ such that
\begin{equation}
\label{BVbound}
\sup_{\gamma \in \, ]0,\eps[} \, \sup_{w \in \R} \, \int_{-\eps}^\eps |\partial_\theta v(\gamma,\theta,w)| \, 
{\rm d}\theta \le C \, .
\end{equation}
\end{lemma}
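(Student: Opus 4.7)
A direct computation gives
\begin{equation*}
\partial_\theta v(\gamma,\theta,w) \;=\; \operatorname{Im}\partial_\theta\log(f(\gamma+i\theta)-iw) \;=\; \operatorname{Re}\frac{f'(\gamma+i\theta)}{f(\gamma+i\theta)-iw},
\end{equation*}
using that $f-iw$ has positive real part so that a holomorphic branch of $\log$ with imaginary part equal to $v$ is well-defined. The plan is to view $\phi_w(z):=f'(z)/(f(z)-iw)$ as a meromorphic function on a neighborhood of $[-\eps,\eps]^{\,2}$ and to isolate its pole.

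Up to shrinking $\eps$, I may assume that $f$ is a biholomorphism from the enlarged open square $R':=\,]-2\eps,2\eps[^{\,2}$ onto $V':=f(R')$, with $\operatorname{Re}\,f>0$ on $\,]0,2\eps[\,\times\,]-2\eps,2\eps[$. Write $V_0:=f([-\eps,\eps]^{\,2})$, a compact subset of $V'$. I split according to whether $iw$ belongs to $V'$ or not.

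If $iw\in V'$, then $z_w:=f^{-1}(iw)\in R'$ is a simple pole of $\phi_w$ with residue $1$, and I decompose $\phi_w(z)=1/(z-z_w)+H_w(z)$ with $H_w$ holomorphic on $R'$. The decisive use of the hypothesis is that any such $z_w\in R'$ with $f(z_w)\in i\R$ must satisfy $\gamma_w:=\operatorname{Re}\,z_w\le 0$, since $\operatorname{Re}\,f>0$ on the open right half of $R'$. Consequently, for $\gamma>0$,
\begin{equation*}
\operatorname{Re}\frac{1}{z-z_w} \;=\; \frac{\gamma-\gamma_w}{(\gamma-\gamma_w)^2+(\theta-\theta_w)^2}
\end{equation*}
is a \emph{positive} Poisson-type kernel in $\theta$ whose integral over $\R$ equals $\pi$, so in particular $\int_{-\eps}^\eps\operatorname{Re}(1/(z-z_w))\,d\theta\le\pi$. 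The remainder $H_w(z)$ extends holomorphically across $z=z_w$, with value $\tfrac12 f''(z_w)/f'(z_w)$ there, is jointly continuous in $(z,w)$, and is thus uniformly bounded on the compact set $[-\eps,\eps]^{\,2}\times\{w:iw\in\overline{V'}\}$, contributing at most $O(\eps)$ to the total variation integral.

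If $iw\notin V'$, then $\phi_w$ is holomorphic on $R'$. The distance $\delta:=\operatorname{dist}(V_0,\C\setminus V')$ is strictly positive by compactness of $V_0$ and openness of $V'$, so $|f(z)-iw|\ge\delta$ for $z\in[-\eps,\eps]^{\,2}$, whence $|\phi_w(z)|\le\|f'\|_\infty/\delta$ and the integral is again $O(\eps)$. Summing the two cases yields the uniform bound \eqref{BVbound}. The main obstacle in carrying out this plan is establishing the sign condition $\gamma_w\le 0$: it is precisely this geometric input from the hypothesis that turns the Cauchy kernel into a positive Poisson-type kernel with uniform $L^1$ mass, rather than an oscillatory term whose total variation could be unbounded as $w$ varies.
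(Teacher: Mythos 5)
Your proof is correct, but it follows a genuinely different route from the paper. Both arguments start from the same identity $\partial_\theta v=\text{\rm Re}\,\big(f'(\gamma+i\theta)/(f(\gamma+i\theta)-i\,w)\big)$, but the paper then distinguishes two cases according to the boundary behaviour of $f$: either $f(i\theta)\in i\,\R$ for all $\theta$, in which case it proves monotonicity of $v$ in $\theta$ for $|w|\lesssim\eps$ (and uses a crude bound for large $|w|$), or $\text{\rm Re}\,f(i\theta)\ge c\,\theta^{2k}$, in which case it combines a curvature computation (the image curves are unions of finitely many convex arcs) with a monotonicity argument in the region $|\theta|\lesssim\gamma^{1/(2k-2)}$, ending with an explicit bound $6\pi$. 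You instead exploit holomorphy globally: after shrinking $\eps$ so that $f$ is injective with nonvanishing derivative on the closed enlarged square, the function $f'/(f-i\,w)$ has at most one (simple, residue $1$) pole $z_w=f^{-1}(i\,w)$ there, the hypothesis $\text{\rm Re}\,f>0$ on the open right half forces $\text{\rm Re}\,z_w\le 0$, so the singular part contributes a positive Poisson kernel of mass at most $\pi$, while the regular part $H_w$ and the case $i\,w\notin f(R')$ contribute $O(\eps)$ uniformly. This avoids the paper's dichotomy altogether (your argument treats the "conservative" and "dissipative" cases of the paper in one stroke), gives the clean bound $\pi+O(\eps)$, and in fact only uses $f'(0)\neq 0$ rather than $f'(0)\in\R_+^*$; the paper's proof, by contrast, is more geometric and stays closer to the curve-convexity picture inherited from \cite{sarason1}. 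Two routine points you should make explicit: choose $\eps$ so that $f$ is injective on the \emph{closed} square $\overline{R'}$ (so that $z_w$ and hence $H_w$ depend continuously on $w$ up to $i\,w\in\partial V'$, or alternatively bound $H_w$ directly by separating the regimes $|z-z_w|\le\eps/2$ and $|z-z_w|\ge\eps/2$), and note that $|\text{\rm Re}\,\phi_w|\le\text{\rm Re}\,(1/(z-z_w))+|H_w|$ precisely because the Poisson term is positive; with these details written out the argument is complete.
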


\begin{proof}
There are two cases (see a similar argument in \cite[Proposition 4.5]{jfc2}). Since $f$ is holomorphic, then 
either $f(i\, \theta)$ is purely imaginary for all $\theta \in \, ]-\eps,\eps[$, or there exists a smallest $k \in \N^*$ 
and a constant $c>0$ such that
\begin{equation}
\label{dissipation}
\text{\rm Re } f(i\, \theta) \ge c \, \theta^{2\, k} \, .
\end{equation}
The proof of \eqref{BVbound} is different in each of these two cases. (The analysis in \cite{sarason1} only 
deals with the first case.)

Observe that we can always change $\eps$ for $\eps/2$, so that we can assume that $f$ together with any of 
its derivatives is bounded on the square $]-\eps,\eps[^2$.
\bigskip

$\bullet$ \underline{Case 1} : we assume that $f$ is such that $f(i\, \theta)$ is purely imaginary for all 
$\theta \in ]-\eps,\eps[$, which amounts to assuming $i^{n-1} \, f^{(n)}(0) \in \R$ for all $n \in \N$. Since 
$v$ denotes the argument of $f(\gamma+i\, \theta)-i\, w$, there holds
\begin{equation}
\label{lem2derivee}
\partial_\theta v (\gamma,\theta,w) =\text{\rm Re} \left( \dfrac{f'(\gamma+i\, \theta)}{f(\gamma+i\, \theta)-i\, w} 
\right) \, .
\end{equation}
The function $f$ is holomorphic and vanishes at $0$, so there exists a constant $C_0>0$, which does not 
depend on $\eps$, such that, up to choosing $\eps$ small enough, there holds
\begin{equation}
\label{bornef}
\sup_{(\gamma,\theta) \in \, ]-\eps,\eps[^2} |f(\gamma+i\, \theta)| \le C_0 \, \eps \, .
\end{equation}
The constant $C_0$ is now fixed. If $|w|>2\, C_0 \, \eps$ and $\gamma>0$, then \eqref{lem2derivee} yields
\begin{equation*}
|\partial_\theta v (\gamma,\theta,w)| \le \dfrac{1}{C_0 \, \eps} \, \sup_{(\gamma,\theta) \in \, ]-\eps,\eps[^2} 
|f'(\gamma+i\, \theta)| \, ,
\end{equation*}
and therefore
\begin{equation*}
\sup_{\gamma \in \, ]0,\eps[} \, \sup_{|w| \ge 2\, C_0 \, \eps} \, \int_{-\eps}^\eps |\partial_\theta v(\gamma,\theta,w)| 
\, {\rm d}\theta \le 2\, C_0 \, \sup_{(\gamma,\theta) \in \, ]-\eps,\eps[^2} |f'(\gamma+i\, \theta)| \, .
\end{equation*}
It therefore only remains to study the case $|w| \le 2\, C_0 \, \eps$, for which we are going to show that 
$\partial_\theta v$ is positive. The formula \eqref{lem2derivee} shows that $\partial_\theta v$ has the same 
sign as
\begin{equation*}
\text{\rm Re} \, \left( f'(\gamma+i\, \theta) \, \big( \overline{f(\gamma+i\, \theta)}+i\, w \big) \right) \, ,
\end{equation*}
and from the assumption on $f$, we find that $\partial_\theta v$ has the same sign as
\begin{equation*}
\text{\rm Re} \left( f'(\gamma+i\, \theta) \, \big( \overline{f(\gamma+i\, \theta)}+i\, w \big) 
-f'(i\, \theta) \, \big( \overline{f(i\, \theta)}+i\, w \big) \right) \, .
\end{equation*}
We rewrite the latter quantity as
\begin{equation*}
\text{\rm Re} \left( f'(i\, \theta) \, \big( \overline{f(\gamma+i\, \theta)}-\overline{f(i\, \theta)} \big) \right) 
-w\, \text{\rm Im} (f'(\gamma+i\, \theta)-f'(i\, \theta))
+\text{\rm Re} \left( (f'(\gamma+i\, \theta)-f'(i\, \theta)) \, \overline{f(\gamma+i\, \theta)} \right) \, ,
\end{equation*}
which, for $\eps$ sufficiently small, is bounded from below by (here we use $|w| \le 2\, C_0 \, \eps$):
\begin{equation*}
\dfrac{f'(0)^2}{2} \, \gamma -3 \, C_0 \, \eps \, \gamma \, 
\sup_{(\gamma,\theta) \in \, ]-\eps,\eps[^2} |f''(\gamma+i\, \theta)| \, .
\end{equation*}
In particular, for $\eps>0$ sufficiently small, there holds $\partial_\theta v(\gamma,\theta,w)>0$ for all 
$(\gamma,\theta) \in \, ]0,\eps[ \times ]-\eps,\eps[$ and $|w| \le 2\, C_0 \, \eps$. This property yields
\begin{equation*}
\sup_{\gamma \in \, ]0,\eps[} \, \sup_{|w| \le 2\, C_0 \, \eps} \, \int_{-\eps}^\eps |\partial_\theta v(\gamma,\theta,w)| 
\, {\rm d}\theta \le \pi \, ,
\end{equation*}
and \eqref{BVbound} holds.
\bigskip

$\bullet$ \underline{Case 2} : we now assume that $f$ satisfies \eqref{dissipation} for some minimal integer 
$k \in \N^*$, which amounts to assuming
\begin{equation*}
\forall \, n=0,\dots,2\, k-1 \, ,\quad i^{n-1} \, f^{(n)}(0) \in \R \, ,\quad 
\text{\rm and} \quad \text{\rm Re} \, ((-1)^k \, f^{(2\, k))}(0)) >0 \, .
\end{equation*}
We can still assume that $f$ satisfies \eqref{bornef} for some constant $C_0>0$, and therefore the same argument 
as in Case 1 gives a uniform bound for the total variation of $v$ when $|w| \ge 2\, C_0 \, \eps$, for in that case, $i\, w$ 
lies at a uniformly positive distance $C_0 \, \eps$ from the curves
\begin{equation*}
{\mathcal C}_\gamma := \big\{ f(\gamma +i\theta) \, ,Ê\, \theta \in \, ]-\eps,\eps[ \, \big\}\, .
\end{equation*}
Let us therefore consider from now on the case $|w| \le 2\, C_0 \, \eps$. We can assume that $f'$ does not vanish on 
$]-\eps,\eps[^2$, so the curve ${\mathcal C}_\gamma$ only consists of regular points. Hence its curvature equals, up 
to multiplying by a positive quantity:
\begin{equation*}
K(\gamma,\theta) := \text{\rm Re} \, f'(\gamma+i\, \theta) \, \text{\rm Im} \, f''(\gamma+i\, \theta) 
-\text{\rm Re} \, f''(\gamma+i\, \theta) \, \text{\rm Im} \, f'(\gamma+i\, \theta) 
=\text{\rm Re} \, \big( f'(\gamma+i\, \theta) \, \overline{f''(\gamma+i\, \theta)} \big) \, . 
\end{equation*}
Performing a Taylor expansion of $f'$ and $f''$, we compute
\begin{equation*}
K(0,\theta) =-\dfrac{f'(0) \, \text{\rm Re} \, ((-1)^k \, f^{(2\, k)}(0))}{(2\, k-2)!} \, \theta^{2\, k-2} +O(\theta^{2\, k-1}) \, .
\end{equation*}
Choosing $\eps$ small enough, this means that there exists positive constants $c$ and $C$, that do not 
depend on $\eps$, such that the curvature $K$ satisfies
\begin{equation*}
K(\gamma,\theta) \le -c\, \theta^{2\, k-2} +C \, \gamma \, .
\end{equation*}

If $k=1$, the curvature $K$ is uniformly negative, and we can conclude that the family of curves ${\mathcal C}_\gamma$, 
$0<\gamma<\eps$, consists of arcs of convex closed curves in the right half-place $\{ \text{\rm Re} \, \zeta >0 \}$. 
For $k=1$, this shows that the total variation
\begin{equation*}
\int_{-\eps}^\eps |\partial_\theta v(\gamma,\theta,w)| \, {\rm d}\theta \, ,
\end{equation*}
is not larger than $2\, \pi$ and the bound \eqref{BVbound} follows. In the case $k \ge 2$, we still have $K \le 0$ 
as long as $|\theta| \ge (\gamma/C)^{1/(2\, k-2)}$ for some suitable constant $C$, which means that the two arcs 
\begin{equation*}
\Big\{ f(\gamma +i\theta) \, ,Ê\, \theta \in \, ]-\eps,\max(-\eps,-(\gamma/C)^{1/(2\, k-2)})] \, \Big\} \, ,\quad 
\Big\{ f(\gamma +i\theta) \, ,Ê\, \theta \in \, [\min(\eps,(\gamma/C)^{1/(2\, k-2)}),\eps[ \, \Big\} \, ,
\end{equation*}
are convex\footnote{By convex, we mean that these curves are arcs of closed convex curves.}. In particular, 
there holds
\begin{equation}
\label{BVbound1}
\int_{]-\eps,\eps[ \setminus ]-(\gamma/C)^{1/(2\, k-2)},(\gamma/C)^{1/(2\, k-2)}[} 
|\partial_\theta v(\gamma,\theta,w)| \, {\rm d}\theta \le 4\, \pi \, .
\end{equation}

We now consider the regime where $\theta$ is small, meaning $|\theta| \le (\gamma/C)^{1/(2\, k-2)}$ with the same 
constant $C$ as the one for which \eqref{BVbound1} holds. We are going to show that in this regime, the derivative 
$\partial_\theta v$ is positive. Using \eqref{lem2derivee}, this derivative has the same sign as
\begin{equation*}
\text{\rm Re} \, \left( f'(\gamma+i\, \theta) \, \big( \overline{f(\gamma+i\, \theta)}+i\, w \big) \right) \, ,
\end{equation*}
which, similarly to what we did in Case 1, we rewrite as\footnote{The property $\text{\rm Re} \, (f'(i\, \theta) \, 
(\overline{f(i\, \theta)}+i\, w))=0$ does not hold any longer, and this is the reason why some new terms arise 
comparing to Case 1.}
\begin{multline*}
\text{\rm Re} \left( f'(i\, \theta) \, \big( \overline{f(\gamma+i\, \theta)}-\overline{f(i\, \theta)} \big) \right) 
-w\, \text{\rm Im} (f'(\gamma+i\, \theta)-f'(i\, \theta))
+\text{\rm Re} \left( (f'(\gamma+i\, \theta)-f'(i\, \theta)) \, \overline{f(\gamma+i\, \theta)} \right) \\
+\text{\rm Re} \left( f'(i\, \theta) \, \big( \overline{f(\gamma+i\, \theta)} +i\, w \big) \right) \, .
\end{multline*}
Using the same lower bounds as in Case 1, the latter quantity is lower bounded, for $\eps$ sufficiently 
small, by (here we use $|w| \le 2\, C_0 \, \eps$):
\begin{equation*}
\dfrac{f'(0)^2}{4} \, \gamma +\text{\rm Re} \left( f'(i\, \theta) \, \big( \overline{f(\gamma+i\, \theta)} +i\, w \big) \right) \, .
\end{equation*}
Performing a Taylor expansion for $f$ and $f'$, we have derived the following lower bound:
\begin{equation*}
|f(\gamma+i\, \theta)-i\, w|^2 \, \partial_\theta v \ge \dfrac{f'(0)^2}{4} \, \gamma -C \, \theta^{2\, k} 
-C\, \eps \, |\theta|^{2\, k-1} \ge c\, \gamma -C'\, \eps \, |\theta|^{2\, k-1} \, ,
\end{equation*}
for suitable constants $c,C'>0$. In the regime $\theta^{2\, k-2} \le \gamma/C$, $C$ fixed as in \eqref{BVbound1}, 
there holds $c\, \gamma -C'\, \eps \, |\theta|^{2\, k-1} \ge c\, \gamma/2$ for $\eps$ small enough, and we have 
thus shown that $\partial_\theta v$ is positive. This gives the bound
\begin{equation*}
\int_{]\min(-\eps,-(\gamma/C)^{1/(2\, k-2)}),\max(\eps,(\gamma/C)^{1/(2\, k-2)})[} 
|\partial_\theta v(\gamma,\theta,w)| \, {\rm d}\theta \le 2\, \pi \, ,
\end{equation*}
which we combine with \eqref{BVbound1} to derive
\begin{equation*}
\sup_{\gamma \in \, ]0,\eps[} \, \sup_{|w| \le 2\, C_0 \, \eps} \, 
\int_{-\eps}^\eps |\partial_\theta v(\gamma,\theta,w)| \, {\rm d}\theta \le 6\, \pi \, .
\end{equation*}
This completes the proof of \eqref{BVbound} in Case 2.
\end{proof}

The above proof of Lemma \ref{lemTVB} crucially uses the holomorphy of $f$, which corresponds, in the 
block reduction of $\M$, to the fact that there is no glancing frequency. When glancing frequencies occur, 
some eigenvalues of $\M$ display algebraic singularities, see \cite{gks}, that are combined with some 
possible dissipative behavior. A complete classification was made in \cite{jfc2}. The proof of the uniform 
BV bound \eqref{BVbound} is much more intricate when $f$ has an algebraic singularity at $0$, and we 
have not managed to complete it so far in a general framework.

Let us now explain how Lemma \ref{lemTVB} yields \eqref{estim7}. We consider a family of curves 
${\mathcal C}_{\ell,\gamma}$ in \eqref{defClgamma}. We can rewrite ${\mathcal C}_{\ell,\gamma}$ as
\begin{equation*}
{\mathcal C}_{\ell,\gamma} :=\underbrace{\xi_\ell^+ (\underline{z})}_{\in i\, \R} 
+\Big\{ f(\gamma +i\, \theta) \, ,Ê\, \theta \in \, ]-\eps,\eps[ \, \Big\} \, ,
\end{equation*}
with
\begin{equation*}
f(\gamma +i\, \theta) := \xi_\ell^+ \big( \underline{z} \, {\rm e}^{\gamma +i\, \theta} \big) -\xi_\ell^+ (\underline{z}) \, .
\end{equation*}
The function $f$ satisfies all the assumptions of Lemma \ref{lemTVB}, therefore, up to shrinking $\eps$, 
we can assume that the argument of $z-i\, w$, as $z$ runs through the curve ${\mathcal C}_{\ell,\gamma}$ 
and $w \in \R$, satisfies the uniform bound \eqref{BVbound}. Applying Carlson's Lemma, we have thus 
obtained \eqref{estim7}.

\subsubsection{Conclusion}

We still consider a fixed $\underline{z} \in \cercle$. Then for some sufficiently small $\eps>0$, we have shown 
that, uniformly with respect to the parameter $\gamma \in \, ]0,\eps[$, the estimate \eqref{estim2} holds.
For $\ell \ge 1-p-r$, we use the induction relation 
\eqref{resolventZ2}, and easily derive the uniform bound
\begin{equation*}
|{\mathcal W}_{\ell+1}(\tau)|^2 \le C_\ell \, |{\mathcal W}_{1-p-r}(\tau)|^2 +C_\ell \, \sum_{j=1-p-r}^\ell |F_j(\tau)|^2 \, .
\end{equation*}
Using \eqref{estim2} and the definition \eqref{defsourceFj}, we obtain
\begin{equation*}
\forall \, \ell \ge 1-p-r \, ,\quad 
\int_{\mathcal I} |{\mathcal W}_\ell(\tau)|^2 \, {\rm d}\theta \le C_\ell \, \sum_{n=0}^s \, \sum_{j \ge 1-r} |f_j^n|^2 \, ,
\end{equation*}
uniformly with respect to $\gamma \in \, ]0,\eps[$. We have therefore proved that for all $P \in \N$, there exists 
a constant $C_P>0$ such that
\begin{equation*}
\sum_{j=1-r}^P \int_{\mathcal I} |\widehat{V}_j(\tau)|^2 \, {\rm d}\theta \le C_P \, 
\sum_{n=0}^s \sum_{j \ge 1-r} |f_j^n|^2 \, .
\end{equation*}

We now use the compactness of $\cercle$ and cover it by finitely many neighborhoods ${\mathcal V}_{z_1,\eps_1}, 
\dots, {\mathcal V}_{z_K,\eps_K}$ such that, for each $k=1,\dots,K$ and $P \in \N$, there exists a constant $C_{k,P}$ 
for which there holds
\begin{equation}
\label{estim3}
\forall \, \gamma \in \, ]0,\eps_k[ \, ,\quad \sum_{j=1-r}^P 
\int_{{\mathcal I}_k} |\widehat{V}_j(\tau)|^2 \, {\rm d}\theta \le C_{k,P} \, \sum_{n=0}^s \sum_{j \ge 1-r} |f_j^n|^2 \, ,
\end{equation}
with the obvious notation
\begin{equation*}
{\mathcal I}_k := \{ \theta \in \R \, / \, {\rm e}^\tau \in {\mathcal V}_{z_k,\eps_k} \} \, .
\end{equation*}
The sets ${\mathcal I}_1,\dots,{\mathcal I}_K$ cover $\R$, so adding the estimates \eqref{estim3} gives
\begin{equation*}
\sum_{j=1-r}^P \int_\R |\widehat{V}_j(\tau)|^2 \, {\rm d}\theta \le C_P \, 
\sum_{n=0}^s \sum_{j \ge 1-r} |f_j^n|^2 \, ,
\end{equation*}
for $0<\gamma<\min \eps_k$, and some suitable constant $C_P>0$. Applying Plancherel Theorem, we get
\begin{equation*}
\sum_{n \in \N} \sum_{j=1-r}^P {\rm e}^{-2\, \gamma \, n} \, |V_j^n|^2 \le C_P \, 
\sum_{n=0}^s \sum_{j \ge 1-r} |f_j^n|^2 \, ,
\end{equation*}
for $\gamma \in \, ]0,\min \eps_k[$. This proves Proposition \ref{prop1} for sufficiently small values of $\gamma$.

The case where $\gamma$ is not close to zero\footnote{It should be understood that we use the scaling 
$\Delta t/\Delta x=\text{\rm Cst}$ and therefore only deal with one single parameter $\gamma$ but $\gamma$ 
is in fact a placeholder for $\gamma \, \Delta t$, so the regime "$\gamma$ small" can be thought of as that 
of the continuous limit $\Delta t \rightarrow 0$ with a fixed $\gamma$. It is then rather obvious that in that 
case, the trace estimate of Proposition \ref{prop1} can not be proved by just isolating the trace terms in 
\eqref{estim1}, which corresponds to passing from an $L^2_{t,x}$ estimate to an $L^2$ estimate at $x=0$.} 
is much easier for in that case, we already have the estimate \eqref{estim1}, and an obvious lower bound 
then gives
\begin{equation*}
\dfrac{\min \eps_k}{1+\min \eps_k} \, \sum_{n \in \N} \sum_{j=1-r}^P {\rm e}^{-2\, \gamma \, n} \, |V_j^n|^2 
\le C \, \sum_{n=0}^s \sum_{j \ge 1-r} |f_j^n|^2 \, ,
\end{equation*}
for $\gamma \ge \min \eps_k$. The proof of Proposition \ref{prop1}, and ultimately of Theorem \ref{thm1}, 
is thus complete.

\section{Proof of Corollary \ref{coro1}. The uniform power boundedness conjecture for schemes with two time levels}
\label{appendixA}

\subsection{The discrete Leibniz formula and integration by parts}

In this paragraph, we recall the discrete version of Leibniz formula and its consequence for integrating by 
parts. We recall that given $\nu \in \Z$ and a sequence $v=(v_j)_{j \ge \nu}$,  ${\bf T} v$ denotes the sequence 
defined by $({\bf T} v)_j :=v_{j+1}$ for all $j \ge \nu-1$, and ${\bf T}^{-1} v$ denotes the sequence defined by 
$({\bf T}^{-1} v)_j :=v_{j-1}$ for all $j \ge \nu+1$. (Of course, $v$ may also be indexed by all $\Z$.) Powers of 
${\bf T}$ and ${\bf T}^{-1}$ are defined similarly. We let ${\bf D}$ denote the operator ${\bf T}-I$, where $I$ 
is the identity. The operator ${\bf D}$ represents a discrete derivative\footnote{Our notation ${\bf D}$ corresponds 
to $D_+$ in \cite{gko}, but we omit the $+$ sign since we shall never use the other discrete derivative 
$D_-=I-{\bf T}^{-1}$, nor the centered derivative $D_0=({\bf T}-{\bf T}^{-1})/2$.}.

The following result is a discrete version of the Leibniz rule.

\begin{lemma}[Discrete Leibniz formula]
\label{lemLeibniz}
Let $u,v$ be two sequences with values in $\C^N$ and indexed either by $j \ge \nu$ for some $\nu \in \Z$, 
or by all $\Z$. Then for all $k \in \N$, there holds
\begin{equation*}
{\bf D}^k \, (u^* \, v) =\sum_{\underset{j_1 +j_2 \ge k}{j_1, j_2=0,}}^k 
\dfrac{k!}{(k-j_1)! \, (k-j_2)! \, (j_1+j_2-k)!} \, ({\bf D}^{j_1} u)^* \, {\bf D}^{j_2} v \, .
\end{equation*}
\end{lemma}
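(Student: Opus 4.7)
The plan is to proceed by induction on $k$. The whole argument rests on one elementary fact about the shift operator: for any two sequences $u, v$ with values in $\C^N$, one has ${\bf T}(u^* v) = ({\bf T}u)^* ({\bf T}v)$, since $u_j^* v_j$ shifted by one is simply $u_{j+1}^* v_{j+1}$. Writing ${\bf T} = I + {\bf D}$ and expanding yields the base case
\[
{\bf D}(u^* v) \;=\; ({\bf T}u)^*({\bf T}v) - u^* v \;=\; u^* ({\bf D}v) + ({\bf D}u)^* v + ({\bf D}u)^* ({\bf D}v),
\]
which agrees with the statement for $k=1$: all three coefficients equal $1$, and exactly the three pairs $(j_1,j_2) \in \{(0,1),(1,0),(1,1)\}$ in $\{0,1\}^2$ satisfy the condition $j_1+j_2 \ge 1$.

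For the induction step I would introduce
\[
c(k,j_1,j_2) \;:=\; \dfrac{k!}{(k-j_1)! \, (k-j_2)! \, (j_1+j_2-k)!},
\]
extended by $0$ whenever any factorial argument is negative, so that the claim reads ${\bf D}^k(u^*v) = \sum_{j_1,j_2 = 0}^{k} c(k,j_1,j_2)\,({\bf D}^{j_1}u)^*({\bf D}^{j_2}v)$. Assuming this at level $k$, apply ${\bf D}$ to each summand via the base-case product rule with $a = {\bf D}^{j_1} u$ and $b = {\bf D}^{j_2} v$. Each summand thereby contributes to three pairs obtained by incrementing $j_1$, $j_2$, or both, and after reindexing the coefficient of $({\bf D}^{J_1}u)^*({\bf D}^{J_2}v)$ in ${\bf D}^{k+1}(u^*v)$ becomes
\[
c(k, J_1-1, J_2) + c(k, J_1, J_2-1) + c(k, J_1-1, J_2-1).
\]

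It remains to verify the Pascal-type identity
\[
c(k+1, J_1, J_2) \;=\; c(k, J_1-1, J_2) + c(k, J_1, J_2-1) + c(k, J_1-1, J_2-1),
\]
which is the only computation in the proof. Placing the three fractions on the right over the common denominator $(k+1-J_1)!\,(k+1-J_2)!\,(J_1+J_2-k-1)!$, the numerator factors as $k!\bigl[(k+1-J_2) + (k+1-J_1) + (J_1+J_2-k-1)\bigr] = k!\,(k+1) = (k+1)!$, which is exactly the numerator of $c(k+1,J_1,J_2)$. The zero-extension convention for $c$ takes care of the boundary cases (such as $J_1 + J_2 = k+1$ or $J_i = k+1$) in a consistent way, closing the induction. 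There is no genuine obstacle: the entire proof reduces to the multiplicativity of ${\bf T}$ plus a one-line factorial manipulation.
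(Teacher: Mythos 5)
Your proof is correct, but it follows a different route from the paper's. The paper first establishes, by induction, the ``mixed'' formula ${\bf D}^k(u^*v)=\sum_{j=0}^k \binom{k}{j}\,({\bf D}^j u)^*\,{\bf T}^j{\bf D}^{k-j}v$, and then substitutes the binomial identity ${\bf T}^j=(I+{\bf D})^j=\sum_{\ell=0}^j\binom{j}{\ell}{\bf D}^\ell$; collecting terms, the product $\binom{k}{j_1}\binom{j_1}{j_1+j_2-k}$ telescopes to the stated trinomial-type coefficient, so all the combinatorics is delegated to two classical binomial facts. You instead run the induction directly on the target identity, which forces you to identify and verify the three-term Pascal-type recurrence $c(k+1,J_1,J_2)=c(k,J_1-1,J_2)+c(k,J_1,J_2-1)+c(k,J_1-1,J_2-1)$; your verification of it (common denominator, numerator $k!\,[(k+1-J_2)+(k+1-J_1)+(J_1+J_2-k-1)]=(k+1)!$) is correct, and the zero-extension convention does handle the boundary configurations, though strictly speaking those cases (where one of the factorial arguments is negative, e.g.\ $J_1+J_2=k+1$ or $J_i=k+1$) fall outside the common-denominator manipulation and deserve the one-line direct check you only allude to. Your approach is self-contained and makes the recurrence structure of the coefficients explicit; the paper's approach is slightly slicker in that it never needs a new combinatorial identity, only the binomial expansion of the shift. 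Also note the statement allows $k=0$, which is trivial but should be recorded as the true base of your induction.
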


\begin{proof}
One starts with the formula
\begin{equation*}
{\bf D}^k \, (u^* \, v) =\sum_{j=0}^k \dfrac{k!}{(k-j)! \, j!} \, ({\bf D}^j u)^* \, {\bf T}^j {\bf D}^{k-j} v \, ,
\end{equation*}
which is obtained by a straightforward induction argument, and then use the binomial identity
\begin{equation*}
\forall \, j \in \N \, ,\quad {\bf T}^j =\sum_{\ell=0}^j \dfrac{j!}{(j-\ell)! \, \ell!} \, {\bf D}^\ell \, .
\end{equation*}
\end{proof}

\noindent The first consequence of Lemma \ref{lemLeibniz} is the following integration by parts formula, which 
mimics the analogous one for the product $u^* \, A \, u^{(k)}$, when $u$ is a $k$-times differentiable function 
and $A$ a hermitian matrix. Corollary \ref{coroipp1} below is a generalization of \cite[Lemma 11.1.1]{gko}.

\begin{corollary}
\label{coroipp1}
Let $A \in {\mathcal M}_N(\C)$ be hermitian and nonzero, and let $k \in \N^*$. Then there exists a unique 
hermitian form $q_{A,k}$ on $\C^{N\, k}$, and a unique collection of real numbers $\alpha_{1,k},\dots,\alpha_{k,k}$ 
that only depend on $k$ and not on $A$, such that for all sequence $u$ with values in $\C^N$, there holds
\begin{equation}
\label{ipp1}
2\, \text{\rm Re } \! (u^* \, A \, {\bf D}^k u) ={\bf D} \, \Big( q_{A,k} (u,\dots,{\bf D}^{k-1}u) \Big) 
+\sum_{j=1}^k \alpha_{j,k} \, ({\bf D}^j u)^* \, A \, {\bf D}^j u \, .
\end{equation}
\end{corollary}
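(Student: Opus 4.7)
The plan is to prove existence by induction on $k$, with Lemma \ref{lemLeibniz} as the essential tool, and to deduce uniqueness by a polynomial identification.

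For the base case $k=1$, expanding $u_{j+1}^* \, A \, u_{j+1}$ via $u_{j+1} = u_j + ({\bf D} u)_j$ (or, equivalently, specializing Lemma \ref{lemLeibniz}) gives
\begin{equation*}
{\bf D}(u^* \, A \, u) = 2 \, \text{\rm Re}(u^* \, A \, {\bf D} u) + ({\bf D} u)^* \, A \, {\bf D} u \, ,
\end{equation*}
which is \eqref{ipp1} with $q_{A,1}(u) = u^* \, A \, u$ and $\alpha_{1,1} = -1$.

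For the inductive step, assume \eqref{ipp1} for all $k' < k$. I would apply Lemma \ref{lemLeibniz} to write
\begin{equation*}
{\bf D}^k(u^* \, A \, u) = \sum_{\substack{0 \le j_1, j_2 \le k \\ j_1 + j_2 \ge k}} c_{j_1, j_2, k} \, ({\bf D}^{j_1} u)^* \, A \, {\bf D}^{j_2} u \, ,
\end{equation*}
with coefficients $c_{j_1, j_2, k}$ symmetric in $(j_1, j_2)$, and isolate three types of contributions: (a) the two extreme terms $(0,k)$ and $(k,0)$, each with coefficient $1$, which combine via $A^* = A$ into $2 \, \text{\rm Re}(u^* \, A \, {\bf D}^k u)$; (b) diagonal contributions $j_1 = j_2 = j$ (requiring $2j \ge k$), each a real scalar multiple of $({\bf D}^j u)^* \, A \, {\bf D}^j u$; and (c) off-diagonal pairs $1 \le j_1 < j_2 \le k$ with $j_1 + j_2 \ge k$, which by symmetry pair up into $2 \, c_{j_1, j_2, k} \, \text{\rm Re}(({\bf D}^{j_1} u)^* \, A \, {\bf D}^{j_2} u)$. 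On each off-diagonal pair I would apply the induction hypothesis at the strictly smaller level $k' = j_2 - j_1 < k$ (strictly less than $k$ because $j_1 \ge 1$) to the shifted sequence ${\bf D}^{j_1} u$, obtaining ${\bf D}$ of a hermitian form in ${\bf D}^{j_1} u, \dots, {\bf D}^{j_2 - 1} u$ plus further diagonal terms $({\bf D}^j u)^* \, A \, {\bf D}^j u$. Using ${\bf D}^k(u^* \, A \, u) = {\bf D}({\bf D}^{k-1}(u^* \, A \, u))$ to absorb the left-hand side into the leading ${\bf D}$ of $q_{A,k}$, and solving for $2 \, \text{\rm Re}(u^* \, A \, {\bf D}^k u)$, yields \eqref{ipp1} with $q_{A,k}$ a hermitian form in $u, \dots, {\bf D}^{k-1} u$ (hermitianity being preserved by the symmetry $c_{j_1, j_2, k} = c_{j_2, j_1, k}$ and by $A^* = A$) and the $\alpha_{j,k}$ real constants depending only on $k$.

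For uniqueness, both sides of \eqref{ipp1} are hermitian-quadratic in the entries $u_j$. If $(q_1, (\alpha_{j,k}^{(1)}))$ and $(q_2, (\alpha_{j,k}^{(2)}))$ both satisfy \eqref{ipp1}, subtracting gives the formal identity
\begin{equation*}
{\bf D}(q_1 - q_2)(u, \dots, {\bf D}^{k-1} u) = \sum_{j=1}^k (\alpha_{j,k}^{(2)} - \alpha_{j,k}^{(1)}) \, ({\bf D}^j u)^* \, A \, {\bf D}^j u \, ,
\end{equation*}
valid for every $u$. Matching coefficients of the monomial $u_m^* \, A \, u_{m+k}$, which on the right can only arise from $j = k$ (as $({\bf D}^k u)^* \, A \, {\bf D}^k u$ is the sole term whose expansion reaches shift $k$), forces $\alpha_{k,k}^{(1)} = \alpha_{k,k}^{(2)}$; a descending induction on $j$ pins down all remaining $\alpha_{j,k}^{(1)} = \alpha_{j,k}^{(2)}$, after which the identity reads ${\bf D}(q_1 - q_2) \equiv 0$, and evaluation on a compactly supported $u$ gives $q_1 = q_2$ on $\C^{Nk}$. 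The main obstacle I anticipate is the combinatorial bookkeeping in the inductive step: one must verify that the reductions from the induction hypothesis applied to the off-diagonal pairs reassemble into a single hermitian form depending only on $u, \dots, {\bf D}^{k-1} u$ (never demanding higher differences), and that the resulting scalars $\alpha_{j,k}$ remain real and independent of $A$.
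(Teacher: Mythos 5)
Your existence argument is correct and is essentially the paper's: induction on $k$, expansion of ${\bf D}^k(u^*\,A\,u)={\bf D}\big({\bf D}^{k-1}(u^*\,A\,u)\big)$ by Lemma \ref{lemLeibniz}, identification of the two extreme terms with $2\,\text{\rm Re}\,(u^*\,A\,{\bf D}^k u)$, and reduction of the off-diagonal terms $\text{\rm Re}\,\big(({\bf D}^{j_1}u)^*\,A\,{\bf D}^{j_2}u\big)$, $1\le j_1<j_2$, by the induction hypothesis at order $j_2-j_1<k$ applied to the sequence ${\bf D}^{j_1}u$. The base case and the reality/$A$-independence of the $\alpha_{j,k}$ are handled correctly.

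The gap is in the uniqueness part. Subtracting two decompositions gives, for every sequence $u$ and every index $m$, an identity ${\bf D}\big(q(u,\dots,{\bf D}^{k-1}u)\big)=\sum_{j=1}^k\beta_j\,({\bf D}^ju)^*\,A\,{\bf D}^ju$ with $q:=q_1-q_2$ unknown. Your first step is sound: at a fixed index $m$ the left-hand side is $q(\text{arguments at }m+1)-q(\text{arguments at }m)$, the first piece involving only $u_{m+1},\dots,u_{m+k}$ and the second only $u_m,\dots,u_{m+k-1}$, so no $u_m^*\,(\cdot)\,u_{m+k}$ monomial occurs on the left, while on the right only $j=k$ produces it (with coefficient $(-1)^k\neq0$); since $A\neq0$ this pins $\beta_k=0$. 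But the announced ``descending induction on $j$'' does not go through as stated: for $j<k$ the monomial pairing $u_m$ with $u_{m+j}$ \emph{does} receive a contribution from the left-hand side, namely the unknown corresponding cross-block of $q$ (coming from $-q(\text{arguments at }m)$), so coefficient matching yields one equation in two unknowns and does not determine $\beta_j$. To complete your route you must interleave the induction with the vanishing of these cross-blocks of $q$, e.g.\ by also matching the shifted monomials pairing $u_{m+a}$ with $u_{m+a+j}$, $a\ge1$, for which the right-hand side no longer contributes once the higher $\beta$'s are known to vanish; already for $k=2$ one needs the extra monomial $u_{m+1}^*\,(\cdot)\,u_{m+2}$ to kill the cross-block of $q$ before concluding $\beta_1=0$. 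Alternatively, the paper's mechanism is simpler: evaluate the identity at a single index with arbitrary $x_j=({\bf D}^ju)_0$, note that $x_0$ enters only linearly through $2\,\text{\rm Re}\,(X_0^*\,Q\,X_1)$, deduce that the first block row of $Q$ vanishes, then take $x_2=\dots=x_k=0$ to get $\alpha_{1,k}$ (using $A\neq0$), and recurse; yet another clean fix is to sum your identity over $m\in\Z$ for finitely supported $u$, so the ${\bf D}$ term telescopes away and all $\beta_j$ are killed at once, after which ${\bf D}q\equiv0$ gives $q=0$ exactly as you say at the end.
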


\begin{proof}
Let us first prove the existence of the decomposition \eqref{ipp1}, which is done by induction. For $k=1$, 
one just uses Lemma \ref{lemLeibniz} and the fact that $A$ is hermitian to obtain
\begin{equation*}
2\, \text{\rm Re } \! (u^* \, A \, {\bf D} u) =\dfrac{1}{2} \, {\bf D} \, \Big( u^* \, A \, u \Big) 
-\dfrac{1}{2} \, ({\bf D} u)^* \, A \, {\bf D} u \, .
\end{equation*}
Let us therefore assume that the existence of the decomposition \eqref{ipp1} holds up to some integer $k$. 
We use Lemma \ref{lemLeibniz} and the fact that $A$ is hermitian to obtain
\begin{align*}
2\, \text{\rm Re } \! (u^* \, A \, {\bf D}^{k+1} u) ={\bf D} \, \Big( {\bf D}^k (u^* \, A \, u) \Big) 
&+\sum_{\underset{2\, j \ge k+1}{j=1,}}^{k+1} \star \, ({\bf D}^j u)^* \, A \, {\bf D}^j u \\
&+\sum_{\underset{j_1 +j_2 \ge k+1, j_1<j_2}{j_1, j_2=1,}}^{k+1} \star \, \, 
\text{\rm Re } \! (({\bf D}^{j_1} u)^* \, A \, {\bf D}^{j_2} u) \, ,
\end{align*}
where the $\star$ symbols represent harmless (real) numerical coefficients. Lemma \ref{lemLeibniz} shows 
that the term ${\bf D}^k (u^* \, A \, u)$ can be written as a hermitian form of $(u,\dots,{\bf D}^k u)$. The 
terms $\text{\rm Re } \! (({\bf D}^{j_1} u)^* \, A \, {\bf D}^{j_2} u)$, $j_1<j_2$, are simplified by using the 
induction assumption:
\begin{equation*}
\text{\rm Re } \! (({\bf D}^{j_1} u)^* \, A \, {\bf D}^{j_2} u) 
=\dfrac{1}{2} \, {\bf D} \, \Big( q_{A,j_2-j_1} ({\bf D}^{j_1} u,\dots,{\bf D}^{j_2-1}u) \Big) 
+\dfrac{1}{2} \, \sum_{j=1}^{j_2-j_1} \alpha_{j,j_2-j_1} \, ({\bf D}^{j_1+j} u)^* \, A \, {\bf D}^{j_1+j} u \, .
\end{equation*}
The existence of the decomposition \eqref{ipp1} up to the integer $k+1$ follows.

Let us now prove that the decomposition \eqref{ipp1} is unique. If two such decompositions exist, this means that 
we can find a hermitian form $q$ (which may depend on $A$), and a collection of real numbers $\alpha_{1,k}, \dots, 
\alpha_{k,k}$ (which do not depend on $A$) such that for all sequences $u$ with values in $\C^N$, there holds
\begin{equation}
\label{cor2eq1}
{\bf D} \, \Big( q(u,\dots,{\bf D}^{k-1}u) \Big) +\sum_{j=1}^k \alpha_{j,k} \, ({\bf D}^j u)^* \, A \, {\bf D}^j u =0 \, .
\end{equation}
Given arbitrary vectors $x_0,\dots,x_k \in \C^N$, we can find a sequence $u$ with values in $\C^N$ and indexed 
by $\N$, such that
\begin{equation*}
\forall \, j=0,\dots,kÊ\, ,\quad ({\bf D}^ju)_0 =x_j \, .
\end{equation*}
Equation \eqref{cor2eq1} evaluated at the index $\ell=0$ gives
\begin{equation*}
q(u_1,\dots,({\bf D}^{k-1}u)_1) -q(x_0,\dots,x_{k-1}) +\sum_{j=1}^k \alpha_{j,k} \, x_j^* \, A \, x_j =0 \, ,
\end{equation*}
that is to say
\begin{equation*}
q(x_1+x_0,\dots,x_k+x_{k-1}) -q(x_0,\dots,x_{k-1}) +\sum_{j=1}^k \alpha_{j,k} \, x_j^* \, A \, x_j =0 \, .
\end{equation*}
Let $Q$ denote the hermitian matrix associated with $q$, which therefore satisfies
\begin{equation}
\label{cor2eq2}
X_1^* \, Q \, X_1 +2\, \text{\rm Re } \! (X_0^* \, Q \, X_1) +\sum_{j=1}^k \alpha_{j,k} \, x_j^* \, A \, x_j =0 \, ,\quad 
X_0 :=\begin{pmatrix}
x_0 \\
\vdots \\
x_{k-1} \end{pmatrix} ,Ê\, X_1 :=\begin{pmatrix}
x_1 \\
\vdots \\
x_k \end{pmatrix} \, .
\end{equation}
The vector $x_0$ enters Equation \eqref{cor2eq2} only through the term $\text{\rm Re } \! (X_0^* \, Q \, X_1)$. 
Since $x_0,\dots,x_k$ are arbitrary in $\C^N$, the block decomposition of $Q$:
\begin{equation*}
Q =\begin{pmatrix}
Q_{0,0} & \dots & Q_{0,k-1} \\
\vdots &  & \vdots \\
Q_{k-1,0} & \dots & Q_{k-1,k-1} \end{pmatrix} \, ,
\end{equation*}
necessarily satisfies $Q_{0,0} =\cdots =Q_{0,k-1} =0$. Since $Q$ is hermitian, this implies of course $Q_{0,0} =\cdots 
=Q_{k-1,0} =0$. In other words, the hermitian form $q$ only depends on its $k-1$ last arguments, which reduces 
\eqref{cor2eq2}, with obvious notation, to
\begin{equation*}
Y_1^* \, \widetilde{Q} \, Y_1 +2\, \text{\rm Re } \! (Y_0^* \, \widetilde{Q} \, Y_1) 
+\sum_{j=1}^k \alpha_{j,k} \, x_j^* \, A \, x_j =0 \, ,\quad Y_0 :=\begin{pmatrix}
x_1 \\
\vdots \\
x_{k-1} \end{pmatrix} ,Ê\, Y_1 :=\begin{pmatrix}
x_2 \\
\vdots \\
x_k \end{pmatrix} \, .
\end{equation*}
Choosing $Y_1=0$ in the latter relation gives $\alpha_{1,k} \, x_1^* \, A \, x_1=0$, which means that $\alpha_{1,k}$ 
equals zero (here we use the fact that $A$ is nonzero), and uniqueness of the decomposition \eqref{ipp1} follows 
by induction on $k$.
\end{proof}

\noindent Let us observe that in Corollary \ref{coroipp1}, if $A$ is a real symmetric matrix, then the corresponding 
$q_{A,k}$ is a real quadratic form on $\R^{N\, k}$. The proof of Corollary \ref{coro1} requires an extension of 
Corollary \ref{coroipp1} to the case where $A$ is real and skew-symmetric, which we state now.

\begin{corollary}
\label{coroipp2}
Let $A \in {\mathcal M}_N(\R)$ be skew-symmetric and nonzero, and let $k \in \N$, $k \ge 2$. Then there exists a unique 
quadratic form $q_{A,k}$ on $\R^{N\, k}$, and a unique collection of real numbers $\beta_{1,k},\dots,\beta_{k-1,k}$ that 
only depend on $k$ and not on $A$, such that for all sequence $u$ with values in $\R^N$, there holds
\begin{equation}
\label{ipp2}
u^* \, A \, {\bf D}^k u ={\bf D} \, \Big( q_{A,k} (u,\dots,{\bf D}^{k-1}u) \Big) +\sum_{j=1}^{k-1} \beta_{j,k} \, ({\bf D}^j u)^* 
\, A \, {\bf D}^{j+1} u \, .
\end{equation}
\end{corollary}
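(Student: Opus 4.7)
The plan is to follow the template of the proof of Corollary \ref{coroipp1}, establishing existence by induction on $k \ge 2$ and uniqueness by a recursive examination of the associated quadratic form. The key feature distinguishing this statement from Corollary \ref{coroipp1} is the skew-symmetry of $A$: for any real vector $w$ one has $w^T A \, w = 0$, so the diagonal terms $({\bf D}^j u)^T A \, ({\bf D}^j u)$ that appeared in \eqref{ipp1} now vanish identically, which is precisely why the remainder in \eqref{ipp2} is built out of the ``gap-one'' terms $({\bf D}^j u)^T A \, ({\bf D}^{j+1} u)$ rather than diagonal ones.

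For existence, I would start with the base case $k=2$. The $k=1$ version of the Leibniz identity in Lemma \ref{lemLeibniz}, applied to the product $u^T \cdot (A\,{\bf D} u)$, reads
\begin{equation*}
{\bf D}(u^T A \, {\bf D} u) = ({\bf D} u)^T A \, {\bf D} u + u^T A \, {\bf D}^2 u + ({\bf D} u)^T A \, {\bf D}^2 u,
\end{equation*}
and since $({\bf D} u)^T A \, ({\bf D} u) = 0$ by real skew-symmetry, this yields \eqref{ipp2} immediately with $q_{A,2}(x_0,x_1) := x_0^T A \, x_1$ and $\beta_{1,2} := -1$. For the inductive step $k \to k+1$, I would apply the same identity to the product $u^T \cdot (A\,{\bf D}^k u)$ to obtain
\begin{equation*}
u^T A \, {\bf D}^{k+1} u = {\bf D}(u^T A \, {\bf D}^k u) - ({\bf D} u)^T A \, {\bf D}^k u - ({\bf D} u)^T A \, {\bf D}^{k+1} u.
\end{equation*}
Setting $v := {\bf D} u$, the last two terms rewrite as $v^T A \, {\bf D}^{k-1} v$ and $v^T A \, {\bf D}^k v$. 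The second is reducible by the inductive hypothesis at order $k$; the first is already of the target form when $k=2$ (a ``gap-one'' term $({\bf D} u)^T A \, {\bf D}^2 u$) and otherwise reducible by the inductive hypothesis at order $k-1 \ge 2$. Re-indexing then reads off $q_{A,k+1}$ and the universal constants $\beta_{j,k+1}$.

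For uniqueness, subtracting two candidate decompositions produces a relation
\begin{equation*}
0 = {\bf D}\big( q(u,\ldots,{\bf D}^{k-1} u) \big) + \sum_{j=1}^{k-1} \beta_j \, ({\bf D}^j u)^T A \, ({\bf D}^{j+1} u)
\end{equation*}
valid for all real sequences $u$. Evaluating at index $0$ against a sequence prescribing $({\bf D}^j u)_0 = x_j$ for $j=0,\ldots,k$, this becomes
\begin{equation*}
0 = q(x_0+x_1,\ldots,x_{k-1}+x_k) - q(x_0,\ldots,x_{k-1}) + \sum_{j=1}^{k-1} \beta_j \, x_j^T A \, x_{j+1}
\end{equation*}
for all $x_0,\ldots,x_k \in \R^N$. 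Writing $q(Y) = Y^T Q \, Y$ with $Q = (Q_{ij})_{0 \le i,j \le k-1}$ symmetric, I would then match coefficients block by block. Arbitrariness of $x_0$ forces $Q_{0,\ell} = 0$ for all $\ell$, hence by symmetry $Q_{\ell,0}=0$; the coefficient of each subsequent $x_i$, regarded as a linear function of $x_{i+1},\ldots,x_k$, simultaneously yields $Q_{i,\ell}=0$ for $\ell > i$ and a relation of the form $2\, Q_{i,i} + \beta_i \, A = 0$.

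The one genuinely new step, and the main substitute for the ``nonzero hermitian form $x^* A \, x$'' argument of Corollary \ref{coroipp1}, is the extraction of $\beta_i = 0$ from this last relation: because $Q_{i,i}$ is symmetric while $A$ is skew-symmetric and nonzero, the equation $2\, Q_{i,i} + \beta_i A = 0$ can only hold if both terms vanish separately, giving $\beta_i = 0$ and $Q_{i,i}=0$. This is the only place where the hypothesis $A \neq 0$ enters, and iterating the block induction then yields $Q = 0$ and $\beta_j = 0$ for every $j$.
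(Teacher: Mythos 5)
Your proposal is correct and follows essentially the same route as the paper: existence by induction on $k$ through the discrete Leibniz formula, with skew-symmetry killing the diagonal terms $({\bf D}^j u)^T A\,{\bf D}^j u$, and uniqueness by evaluating at arbitrary data $x_0,\dots,x_k$, annihilating the first block row/column of $Q$, and then exploiting that a symmetric block plus a real multiple of the nonzero skew-symmetric $A$ can vanish only if both do, which forces each $\beta_{j,k}=0$. The only cosmetic difference is that you unroll the full block-by-block elimination of $Q$, where the paper stops after $\beta_{1,k}=0$ and concludes by induction on $k$.
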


\begin{proof}
The proof follows closely that of Corollary \ref{coroipp2}. We briefly indicate the induction argument for the 
existence of the decomposition \eqref{ipp2}. For $k=2$, we use Lemma \ref{lemLeibniz} and the fact that 
$A$ is skew-symmetric to obtain
\begin{equation*}
u^* \, A \, {\bf D}^2 u = {\bf D} \, \big( u^* \, A \, {\bf D} u \big) -({\bf D} u)^* \, A \, {\bf D}^2 u \, .
\end{equation*}
Since $u$ is real valued, the term $u^* \, A \, {\bf D} u$ coincides with $q(u,{\bf D}u)$, where the matrix of 
the quadratic form $q$ is
\begin{equation*}
\dfrac{1}{2} \, \begin{pmatrix}
0 & A \\
-A & 0 \end{pmatrix} \, .
\end{equation*}
If the existence of the decomposition \eqref{ipp2} holds up to some integer $k$, then Lemma \ref{lemLeibniz} 
gives
\begin{equation*}
u^* \, A \, {\bf D}^{k+1} u = {\bf D} \, \big( u^* \, A \, {\bf D}^k u \big) -({\bf D} u)^* \, A \, {\bf D}^k u 
-({\bf D} u)^* \, A \, {\bf D}^{k+1} u \, .
\end{equation*}
We apply the induction assumption for decomposing the term $({\bf D} u)^* \, A \, {\bf D}^{k+1} u$. There are 
two cases for the remaining term $({\bf D} u)^* \, A \, {\bf D}^k u$. Either $k=2$, and this term is already in an 
irreducible form, or $k \ge 3$, and we can apply the induction assumption, which eventually yields the 
decomposition \eqref{ipp2} up to $k+1$.

Uniqueness of the decomposition \eqref{ipp2} relies on more or less the same arguments as those used in the 
proof of Corollary \ref{coroipp1}. More precisely, assuming that two decompositions \eqref{ipp2} exist, we can 
find a quadratic form $q$ on $\R^{N\, k}$, with a corresponding real symmetric matrix $Q$, and a collection of 
real numbers $\beta_{1,k}, \dots, \beta_{k-1,k}$ that satisfy\footnote{Here the vectors $x_0,\dots,x_k$ are real.}
\begin{equation*}
X_1^* \, Q \, X_1 +2\, X_0^* \, Q \, X_1 +\sum_{j=1}^{k-1} \beta_{j,k} \, x_j^* \, A \, x_{j+1} =0 \, ,\quad 
X_0 :=\begin{pmatrix}
x_0 \\
\vdots \\
x_{k-1} \end{pmatrix} ,Ê\, X_1 :=\begin{pmatrix}
x_1 \\
\vdots \\
x_k \end{pmatrix} \, .
\end{equation*}
Identifying the $x_0$ term shows, as in the proof of Corollary \ref{coroipp1}, that the block decomposition of 
$Q$ reads
\begin{equation*}
Q =\begin{pmatrix}
0 & \dots & \dots & 0 \\
\vdots & & & \\
\vdots & & \widetilde{Q} & \\
0 & & & \end{pmatrix} \, ,
\end{equation*}
which means that the following relation holds for all vectors $x_1,\dots,x_k \in \R^N$:
\begin{equation*}
Y_1^* \, \widetilde{Q} \, Y_1 +2\, Y_0^* \, \widetilde{Q} \, Y_1 +\sum_{j=1}^{k-1} \beta_{j,k} \, x_j^* \, A \, x_{j+1} 
=0 \, ,\quad Y_0 :=\begin{pmatrix}
x_1 \\
\vdots \\
x_{k-1} \end{pmatrix} ,Ê\, Y_1 :=\begin{pmatrix}
x_2 \\
\vdots \\
x_k \end{pmatrix} \, .
\end{equation*}
Here the proof differs slightly from that of Corollary \ref{coroipp1} since there is no quadratic term with respect 
to $x_1$. Instead, we identify the quadratic terms with respect to $(x_1,x_2)$, which amounts to taking first a 
partial derivative with respect to $x_2$ and then a partial derivative with respect to $x_1$. This yields
\begin{equation*}
2\, \widetilde{Q}_{1,1} +\beta_{1,k} \, A =0 \, ,
\end{equation*}
where $\widetilde{Q}_{1,1}$ denotes the upper left block of $\widetilde{Q}$ in its block decomposition. Observe 
now that $\widetilde{Q}$ is symmetric, and therefore so is $\widetilde{Q}_{1,1}$, while $A$ is skew-symmetric 
and $\beta_{1,k}$ is real. Hence $\beta_{1,k}$ is zero and uniqueness of the decomposition \eqref{ipp2} follows 
by induction.
\end{proof}

\subsection{Consequences for Cauchy problems}

In this paragraph, we explain some consequences of Corollaries \ref{coroipp1} and \ref{coroipp2} for showing 
stability of finite difference discretizations of Cauchy problems. We consider a numerical discretization with two 
time levels, that is:
\begin{equation}
\label{cauchy}
\begin{cases}
U_j^{n+1} =Q \, U_j^n \, ,& j\in \Z \, ,\quad n\ge 0 \, ,\\
U_j^0 = f_j \, ,& j\in \Z \, ,
\end{cases}
\end{equation}
with $(f_j)_{j \in \Z} \in \ell^2$, and
\begin{equation*}
Q=\sum_{\ell =-r}^p A_\ell \, {\bf T}^\ell \, ,\quad \sum_{\ell =-r}^p A_\ell =I \, .
\end{equation*}
The latter consistency assumption allows us to express the finite difference operator $Q$ as a sum of 
discrete derivatives. Namely, we write
\begin{equation*}
{\bf T}^r \, (Q-I)=\sum_{\underset{\ell \neq 0}{\ell=-r,}}^p A_\ell \, ({\bf T}^{r+\ell} -{\bf T}^r) \, ,
\end{equation*}
and then decompose each ${\bf T}^{r+\ell} -{\bf T}^r$ as a linear combination of ${\bf D},\dots,{\bf D}^{p+r}$ 
(which amounts to decomposing the polynomial $X^{r+\ell}-X^r$ on the family $(X-1,\dots,(X-1)^{p+r})$ which 
forms a basis of the space of real polynomials that vanish at $1$ and whose degree is not larger than $p+r$). 
Summing up, the operator $Q$ can be written equivalently as
\begin{equation}
\label{decompQ}
Q=I +{\bf T}^{-r} \, \sum_{\ell=1}^{p+r} \widetilde{A}_\ell \, {\bf D}^\ell \, ,
\end{equation}
for suitable matrices $\widetilde{A}_1,\dots,\widetilde{A}_{p+r}$ whose expression can be obtained from 
$A_{-r},\dots,A_p$. It is rather clear that all matrices $\widetilde{A}_1,\dots,\widetilde{A}_{p+r}$ are real, 
and they are symmetric if $A_{-r},\dots,A_p$ are symmetric (which we shall not assume, but this might 
simplify some of the calculations below in some given situation).

The following Lemma is a direct consequence of Corollaries \ref{coroipp1} and \ref{coroipp2}.

\begin{lemma}
\label{lemdecomp}
There exists a quadratic form $q$ on $\R^{N\, (p+r)}$, some real symmetric matrices $S_1,\dots,S_{p+r}$ 
and some real skew-symmetric matrices $\widetilde{S}_1,\dots,\widetilde{S}_{p+r-1}$ such that for all 
sequence $U$ with values in $\R^N$, there holds
\begin{align}
2\, U^* \, (Q-I) \, U +|(Q-I) \, U|^2 ={\bf T}^{-r} \, {\bf D} \, \Big( q(U,\dots,{\bf D}^{p+r-1} U) \Big) 
&+{\bf T}^{-r} \, \sum_{\ell=1}^{p+r} ({\bf D}^\ell U)^* \, S_\ell \, {\bf D}^\ell U \notag \\
&+{\bf T}^{-r} \, \sum_{\ell=1}^{p+r-1} ({\bf D}^\ell U)^* \, \widetilde{S}_\ell \, {\bf D}^{\ell+1} U \, .\label{ippQ}
\end{align}
If the sequence $U$ is indexed by $j \ge 1-r$, then \eqref{ippQ} is valid for all indeces $j \ge 1$, while if 
the sequence $U$ is indexed by $\Z$, then \eqref{ippQ} is valid on all $\Z$.

In particular, the solution $(U_j^n)$ to \eqref{cauchy} satisfies
\begin{equation}
\label{energybalance}
\forall \, n \in \N \, ,\quad \sum_{j \in \Z} |U_j^{n+1}|^2 -\sum_{j \in \Z} |U_j^n|^2 
=\sum_{j \in \Z} \sum_{\ell=1}^{p+r} ({\bf D}^\ell U_j^n)^* \, S_\ell \, {\bf D}^\ell U_j^n 
+\sum_{j \in \Z} \sum_{\ell=1}^{p+r-1} ({\bf D}^\ell U_j^n)^* \, \widetilde{S}_\ell \, {\bf D}^{\ell+1} U_j^n \, .
\end{equation}
\end{lemma}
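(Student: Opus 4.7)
The plan is to expand $2\, U^*(Q-I)\, U + |(Q-I)U|^2$ into a finite sum of elementary bilinear terms of the form $(\mathbf{D}^{\ell_1}U)^* \, C\, \mathbf{D}^{\ell_2}U$, and then apply Corollaries \ref{coroipp1} and \ref{coroipp2} to reorganize these terms into the shape \eqref{ippQ}.

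From \eqref{decompQ} one has $(Q-I)U_j = \sum_{\ell=1}^{p+r} \widetilde{A}_\ell \, \mathbf{D}^\ell U_{j-r}$, so $|(Q-I)U|^2$ is immediately the $\mathbf{T}^{-r}$-shift of $\sum_{\ell_1,\ell_2} (\mathbf{D}^{\ell_1}U)^* \, \widetilde{A}_{\ell_1}^* \, \widetilde{A}_{\ell_2} \, \mathbf{D}^{\ell_2}U$. For the cross term $2\, U^*(Q-I)U$, I would rewrite $U_j = (\mathbf{T}^r (\mathbf{T}^{-r} U))_j = \sum_{m=0}^r \binom{r}{m}\, \mathbf{D}^m U_{j-r}$ via the binomial identity already used in the proof of Lemma \ref{lemLeibniz}, so that this term likewise reads as $\mathbf{T}^{-r}$ applied to a sum of bilinear expressions $(\mathbf{D}^m U)^* \, \widetilde{A}_\ell \, \mathbf{D}^\ell U$ with $0 \le m \le r$ and $1 \le \ell \le p+r$.

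Next, I would process each elementary bilinear term $(\mathbf{D}^{\ell_1}U)^* \, C\, \mathbf{D}^{\ell_2}U$ by decomposing $C$ into its symmetric and skew-symmetric parts $S$ and $K$. When $\ell_1 = \ell_2$, the skew part vanishes on real vectors and one is left with a pure diagonal contribution to the $S_\ell$ family. When $\ell_1 < \ell_2$, setting $v := \mathbf{D}^{\ell_1}U$, Corollary \ref{coroipp1} applied to $v^* \, S\, \mathbf{D}^{\ell_2 - \ell_1} v$ converts the symmetric part into a $\mathbf{D}(\cdots)$ piece plus diagonal contributions $(\mathbf{D}^{\ell_1+j}U)^* \, S\, \mathbf{D}^{\ell_1+j}U$, while Corollary \ref{coroipp2} applied to $v^* \, K\, \mathbf{D}^{\ell_2 - \ell_1} v$ (when $\ell_2 - \ell_1 \ge 2$; the case $\ell_2 - \ell_1 = 1$ is already of the desired consecutive form) converts the skew part into a $\mathbf{D}(\cdots)$ piece plus consecutive-difference terms $(\mathbf{D}^{\ell_1+j}U)^* \, K\, \mathbf{D}^{\ell_1+j+1}U$. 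Collecting all $\mathbf{D}(\cdots)$ pieces into a single $\mathbf{D}\bigl(q(U, \mathbf{D}U, \ldots, \mathbf{D}^{p+r-1}U)\bigr)$ and grouping the diagonal and consecutive-difference terms by derivative order yields \eqref{ippQ}. The validity on indices $j \ge 1$ when $U$ is only defined for $j \ge 1-r$ is automatic, since every discrete derivative inside the outer $\mathbf{T}^{-r}$ involves at most $p+r$ forward shifts and is evaluated at indices $\ge 1-r$.

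Finally, for the energy balance \eqref{energybalance}, the pointwise identity $|U_j^{n+1}|^2 - |U_j^n|^2 = 2\, (U_j^n)^* (Q-I)U_j^n + |(Q-I)U_j^n|^2$ combined with \eqref{ippQ}, summed over $j \in \Z$, gives the result: translation invariance of the $\Z$-sum absorbs the outer $\mathbf{T}^{-r}$, and the $\mathbf{D}(q(\cdots))$ piece telescopes to zero because Assumption \ref{assumption1} keeps $(U_j^n)_{j \in \Z}$ uniformly in $\ell^2$, so boundary contributions at $\pm \infty$ vanish. The main technical chore, though not a deep obstacle, is the combinatorial bookkeeping of the reduction: ensuring that iterated uses of Corollaries \ref{coroipp1} and \ref{coroipp2} keep the total derivative order in $q$ bounded by $p+r-1$ and never leave behind any bilinear term that is neither purely diagonal nor of consecutive-difference type.
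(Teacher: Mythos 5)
Your argument is correct and follows essentially the same route as the paper's proof: expand the two terms through \eqref{decompQ} and the binomial identity into bilinear expressions $({\bf D}^{\ell_1}U)^*\,C\,{\bf D}^{\ell_2}U$, split each matrix into symmetric and skew-symmetric parts, apply Corollaries \ref{coroipp1} and \ref{coroipp2}, and obtain \eqref{energybalance} by telescoping the ${\bf D}\,q$ term over $\Z$. The only cosmetic difference is that you invoke Assumption \ref{assumption1} to justify the telescoping, whereas the decay of $q(U_j^n,\dots)$ as $|j|\to\infty$ already follows from $U^n\in\ell^2$ (a consequence of $f\in\ell^2$ and the finite stencil of $Q$), so no stability assumption is needed there.
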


\noindent The decomposition \eqref{ippQ} is unique provided that $Q$ is not the identity operator.

\begin{proof}
The existence of the decomposition \eqref{ippQ} is indeed an easy consequence of Corollaries \ref{coroipp1} 
and \ref{coroipp2}. Due to \eqref{decompQ}, the term $U^* \, (Q-I) \, U$ is a sum of terms of the form
\begin{equation*}
U^* \, ({\bf T}^{-r} \, \widetilde{A}_\ell \, {\bf D}^\ell U) ={\bf T}^{-r} \, \Big( ({\bf T}^r U)^* \, \widetilde{A}_\ell \, 
{\bf D}^\ell U \Big) \, ,
\end{equation*}
which can be written as a (real) linear combination of terms of the form $({\bf D}^{\ell_1} U)^* \, \widetilde{A}_{\ell_2} 
\, {\bf D}^{\ell_2} U$ by simply expanding ${\bf T}^r$ as a linear combination of $I,\dots,{\bf D}^r$ (which is nothing 
but the binomial identity). We then split $\widetilde{A}_{\ell_2}$ as the sum of its symmetric and skew-symmetric parts 
and apply Corollaries \ref{coroipp1} and \ref{coroipp2} (if $\ell_1=\ell_2$, nothing needs to be done). The term 
$|(Q-I) \, U|^2$ can also be written under the form on the right hand-side of \eqref{ippQ} since it is a sum of terms 
of the form
\begin{equation*}
{\bf T}^{-r} \, \Big( \Big( \widetilde{A}_{\ell_1} \, {\bf D}^{\ell_1} U \Big)^* \, \widetilde{A}_{\ell_2} \, {\bf D}^{\ell_2} U 
\Big) ={\bf T}^{-r} \, \Big( ({\bf D}^{\ell_1} U)^* \, \Big( \widetilde{A}_{\ell_1}^* \, \widetilde{A}_{\ell_2} \Big) \, 
{\bf D}^{\ell_2} U \Big) \, ,
\end{equation*}
and it only remains to split $\widetilde{A}_{\ell_1}^* \, \widetilde{A}_{\ell_2}$ as the sum of its symmetric and 
anti-symmetric parts and to apply Corollaries \ref{coroipp1} and \ref{coroipp2} (if $\ell_1=\ell_2$, nothing needs 
to be done).

The energy balance \eqref{energybalance} follows by observing that the sum on $\Z$ of the discrete derivative 
${\bf D} \, q$ vanishes. The remaining terms incorporate the (possible) dissipative behavior of the discretization.
\end{proof}

As a concrete example, let us explain Lemma \ref{lemdecomp} for three points schemes and scalar equations. 
In that case, $N=1$ so that there is no skew-symmetric matrix except $0$, and the scheme reads
\begin{equation*}
U_j^{n+1} =a_{-1} \, U_{j-1}^n +a_0 \, U_j^n +a_1 \, U_{j+1}^n \, ,
\end{equation*}
with a triple of real numbers $a_{-1},a_0,a_1$ that satisfies $a_{-1}+a_0+a_1=1$. In that case, the integration 
by parts procedure leads to the relation
\begin{multline*}
2\, U_j^n \, (Q-I) \, U_j^n +|(Q-I) \, U_j^n|^2 ={\bf T}^{-1} \, {\bf D} \, \Big( (a_1-a_{-1}) \, |U_j^n|^2 
+2\, a_1 \, U_j^n \, {\bf D}\, U_j^n +a_1 \, (a_1-a_{-1}) \, |{\bf D} \, U_j^n|^2 \Big) \\
+{\bf T}^{-1} \, \Big( d_1 \, |{\bf D} \, U_j^n|^2 +d_2 \, |{\bf D}^2 \, U_j^n|^2 \Big) \, ,
\end{multline*}
with
\begin{equation*}
d_1 :=-\dfrac{a_1+a_{-1}}{2} +(a_1-a_{-1})^2 \, ,\quad d_2 :=a_1 \, a_{-1} \, .
\end{equation*}
In that case, stability for the Cauchy problem, that is fulfillment of Assumption \ref{assumption1}, is equivalent 
to the property
\begin{equation*}
\max \, (d_1,d_1 +4\, d_2) \le 0 \, ,
\end{equation*}
or, in other words,
\begin{equation*}
\max \, \Big( (a_1-a_{-1})^2,(a_1+a_{-1})^2 \Big) \le \dfrac{1-a_0}{2} \, .
\end{equation*}
For the upwind, Lax-Friedrichs and Lax-Wendroff schemes, one gets the standard CFL condition 
$\lambda \, |a| \le 1$, with $a$ the velocity of the transport equation one is willing to approximate.

\subsection{Proof of Corollary \ref{coro1}}

We consider the numerical scheme \eqref{numibvp} with $s=0$, zero interior source term and zero boundary source 
term. Writing $Q$ instead of $Q_0$ for simplicity, the scheme reads
\begin{equation}
\label{schema}
\begin{cases}
U_j^{n+1} =Q \, U_j^n \, ,& j\ge 1\, ,\quad n\ge 0 \, ,\\
U_j^{n+1} =B_{j,-1} \, U_1^{n+1} +B_{j,0} \, U_1^n \, ,& j=1-r,\dots,0\, ,\quad n\ge 0 \, ,\\
U_j^0 = f_j \, ,& j\ge 1-r\, ,
\end{cases}
\end{equation}
with $(f_j)_{j \ge 1-r} \in \ell^2$, and
\begin{equation*}
Q=\sum_{\ell =-r}^p A_\ell \, {\bf T}^\ell \, ,\quad \sum_{\ell =-r}^p A_\ell =I \, .
\end{equation*}

We use the decomposition \eqref{ippQ} of $Q$. The solution\footnote{It is assumed here that the initial condition 
consists of real vectors, so that the solution to \eqref{schema} is real. The extension to complex sequences is 
straightforward because the scheme is linear with real coefficients.} $(U_j^n)$ to \eqref{schema} satisfies
\begin{multline*}
\forall \, j \ge 1 \, ,\quad |U_j^{n+1}|^2 -|U_j^n|^2 =2\, (U_j^n)^* \, (Q-I) \, U_j^n +|(Q-I) \, U_j^n|^2 
={\bf T}^{-r} \, {\bf D} \, \Big( q(U_j^n,\dots,{\bf D}^{p+r-1} U_j^n) \Big) \\
+{\bf T}^{-r} \, \sum_{\ell=1}^{p+r} ({\bf D}^\ell U_j^n)^* \, S_\ell \, {\bf D}^\ell U_j^n 
+{\bf T}^{-r} \, \sum_{\ell=1}^{p+r-1} ({\bf D}^\ell U_j^n)^* \, \widetilde{S}_\ell \, {\bf D}^{\ell+1} U_j^n \, .
\end{multline*}
Summing with respect to $j \ge 1$, we get
\begin{align}
\sum_{j \ge 1} |U_j^{n+1}|^2 -\sum_{j \ge 1} |U_j^n|^2 =&-q(U_{1-r}^n,\dots,{\bf D}^{p+r-1} U_{1-r}^n) \notag \\
&+\sum_{j \ge 1-r} \sum_{\ell=1}^{p+r} ({\bf D}^\ell U_j^n)^* \, S_\ell \, {\bf D}^\ell U_j^n 
+\sum_{j \ge 1-r} \sum_{\ell=1}^{p+r-1} ({\bf D}^\ell U_j^n)^* \, \widetilde{S}_\ell \, {\bf D}^{\ell+1} U_j^n \, ,\label{energy1}
\end{align}
where, comparing with Lemma \ref{lemdecomp}, the novelty is the "boundary" term 
$q(U_{1-r}^n,\dots,{\bf D}^{p+r-1} U_{1-r}^n)$.

Our goal now is to estimate the terms which appear in the second line of \eqref{energy1}. Following an argument 
already used in \cite{wu,jfcag}, we extend the sequence $(U_j^n)_{j \ge 1-r}$ by zero for $j \le -r$, and still denote it 
$(U_j^n)$. This extended sequence belongs to $\ell^2(\Z)$, and we can therefore use the assumption of Corollary 
\ref{coro1} on the action of $Q$ on $\ell^2(\Z)$. We obtain
\begin{equation*}
\sum_{j \in \Z} 2\, (U_j^n)^* \, (Q-I) \, U_j^n +|(Q-I) \, U_j^n|^2 \le 0 \, ,
\end{equation*}
which, using the decomposition \eqref{decompQ} and the fact that $U_j^n$ vanishes for $j \le -r$, gives
\begin{multline}
\label{energy2}
\sum_{j \ge 1-r} \sum_{\ell=1}^{p+r} ({\bf D}^\ell U_j^n)^* \, S_\ell \, {\bf D}^\ell U_j^n 
+\sum_{j \ge 1-r} \sum_{\ell=1}^{p+r-1} ({\bf D}^\ell U_j^n)^* \, \widetilde{S}_\ell \, {\bf D}^{\ell+1} U_j^n \\
\le -\sum_{j=1-p-2r}^{-r} \sum_{\ell=1}^{p+r} ({\bf D}^\ell U_j^n)^* \, S_\ell \, {\bf D}^\ell U_j^n 
-\sum_{j=1-p-2r}^{-r} \sum_{\ell=1}^{p+r-1} ({\bf D}^\ell U_j^n)^* \, \widetilde{S}_\ell \, {\bf D}^{\ell+1} U_j^n \, .
\end{multline}
The combination of \eqref{energy1} and \eqref{energy2} shows that there exists a quadratic form $q_\flat$ on 
$\R^{N\, (p+r)}$, which only depends on $Q$, such that any solution to \eqref{schema} satisfies
\begin{equation*}
\sum_{j \ge 1} |U_j^{n+1}|^2 -\sum_{j \ge 1} |U_j^n|^2 \le q_\flat (U_{1-r}^n,\dots,U_p^n) \, .
\end{equation*}
In particular, there exists a numerical constant $C$, that only depends on the operator $Q$ and not on the solution 
$(U_j^n)$ to \eqref{schema}, such that
\begin{equation*}
\sum_{j \ge 1} |U_j^{n+1}|^2 -\sum_{j \ge 1} |U_j^n|^2 \le C \, \sum_{j=1-r}^p |U_j^n|^2 \, .
\end{equation*}
Summing with respect to $n$, and using the fact that $\Delta t/\Delta x$ is constant, we end up with
\begin{equation}
\label{energy3}
\sup_{n \in \N} \, \sum_{j \ge 1} \Delta x \, |U_j^n|^2 \le \sum_{j \ge 1} \Delta x \, |f_j|^2 
+C \, \sum_{n \ge 0} \sum_{j=1-r}^p \Delta t \, |U_j^n|^2 \, .
\end{equation}
Let us now observe that for all $n \in \N$, we have
\begin{equation*}
\sum_{j=1-r}^0 \Delta x \, |U_j^n|^2 \le \dfrac{1}{\lambda} \, \sum_{j=1-r}^0 \Delta t \, |U_j^n|^2  
\le \dfrac{1}{\lambda} \, \sum_{\nu \in \N} \sum_{j=1-r}^0 \Delta t \, |U_j^\nu|^2  \, ,
\end{equation*}
so that the left hand-side of \eqref{energy3} can be slightly increased in order to obtain
\begin{equation*}
\sup_{n \in \N} \, \sum_{j \ge 1-r} \Delta x \, |U_j^n|^2 \le \sum_{j \ge 1} \Delta x \, |f_j|^2 
+C \, \sum_{n \ge 0} \sum_{j=1-r}^p \Delta t \, |U_j^n|^2 \, .
\end{equation*}
We then use Theorem \ref{thm1} to control the trace of $(U_j^n)$ in terms of the initial condition $(f_j)$, 
which is done by letting $\gamma$ tend to zero in \eqref{estimthm}, and this completes the proof of Corollary 
\ref{coro1}.

\begin{remark}
The above derivation of the semigroup estimate for the solution $(U_j^n)$ to \eqref{schema} heavily relies on 
the assumption $\| Q_0 \|_{\ell^2(\Z) \rightarrow \ell^2(\Z)}=1$, which in view of the consistency assumption on 
the $A_\ell$'s, is equivalent to $\| Q_0 \|_{\ell^2(\Z) \rightarrow \ell^2(\Z)} \le 1$. This property is called "strong 
stability" in \cite{strang}, see also \cite{tadmor2}, though "strong stability" in this context should not be mixed 
up with Definition \ref{defstab1}.

The exact same assumption on $Q_0$ is the cornerstone of the analysis in \cite{jfcag}. Since \cite{wu} deals 
with scalar problems, this assumption is also present, though hidden, in \cite{wu}. However, the method we 
use here is completely different from the one in \cite{wu,jfcag} and bypasses the introduction of Dirichlet or 
other auxiliary boundary conditions. Unlike \cite{wu,jfcag}, we use here the consistency of the discretized 
hyperbolic operator in order to derive an "integration by parts formula", which connects the time derivative 
of the $\ell^2$ norm of $(U_j^n)$ with the trace of $(U_j^n)$ on the first space meshes.

We aim in a near future to extend the derivation of such an "integration by parts formula" to numerical schemes 
with arbitrarily many time levels, which would imply, with the help of Theorem \ref{thm1}, a semigroup estimate 
for the solution to \eqref{numibvp} and therefore a positive answer to the uniform power boundedness conjecture.
\end{remark}

\appendix
\section{On the non-glancing condition}
\label{appA}

The goal of this Appendix is to show that the validity of Proposition \ref{prop1} is equivalent to the 
non-occurrence of glancing wave packets. This uses similar constructions as those in \cite{trefethen3}, 
namely we use {\it discrete geometric optics} expansions. Opposite to \cite{trefethen3}, we use here a 
{\it fully discrete} framework, namely we only deal with piecewise constant functions. This has a major 
impact on the arguments we use. While in \cite[Lemma 5.1]{trefethen3}, $L^\infty$ error bounds are derived 
by using decay of the Fourier transform (or, equivalently, smoothness of the functions), the framework of 
step functions yields Fourier transforms that have no better than $L^2$ decay (and certainly not $L^1$). 
Hence the derivation of $L^\infty$ error bounds is more intricate than in \cite{trefethen3}, and we 
pay special attention to the rigorous justification of our error bound below. Our result is the following.

\begin{proposition}
\label{propA}
Let Assumptions \ref{assumption1} and \ref{assumption2} be satisfied. Assume furthermore that there exists 
a constant $C>0$ such that for all $\Delta t \in \, ]0,1]$, and for all solution to the fully discrete Cauchy problem
\begin{equation}
\label{cauchys}
\begin{cases}
V_j^{n+1} = {\dps \sum_{\sigma=0}^s} Q_\sigma \, V_j^{n-\sigma} \, ,& j\in \Z \, ,\quad n\ge s \, ,\\
V_j^n = f_j^n \, ,& j \in \Z\, ,\quad n=0,\dots,s \, ,
\end{cases}
\end{equation}
there holds
\begin{equation}
\label{estimtrace}
\sum_{n \ge 0} \Delta t \, |V_0^n|^2 \le C \, \sum_{n=0}^s \, \sum_{j \in \Z} \Delta x \, |f_j^n|^2 \, .
\end{equation}
Then Assumption \ref{assumption3} is satisfied.
\end{proposition}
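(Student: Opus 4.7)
The plan is to argue by contrapositive. Assume that Assumption~\ref{assumption3} fails, so that there exist $\underline\kappa = e^{i\underline\eta} \in \cercle$ and $\underline z \in \cercle \cap \text{\rm sp}(\mathcal{A}(\underline\kappa))$ for which some branch $\zeta_k$ of eigenvalues provided by Assumption~\ref{assumption2} satisfies $\zeta_k(\underline\kappa) = \underline z$ and $\zeta_k'(\underline\kappa) = 0$. I will construct a family $\{(f_j^n)\}_{\varepsilon > 0}$ of Cauchy initial data such that the right-hand side of \eqref{estimtrace} is $O(\varepsilon^{-1})$ while $\sum_n \Delta t \, |V_0^n|^2 \gtrsim \varepsilon^{-2}$, which violates \eqref{estimtrace} as $\varepsilon \to 0$.

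\textbf{Construction of the wave packet.} Fixing $\Delta t = \Delta x = 1$ (the estimate is scale invariant), pick $\chi \in C_c^\infty(\R)$ with $\int_\R \chi \neq 0$, and, using the companion structure of $\mathcal{A}$, write the eigenvector from Assumption~\ref{assumption2} in the form $e_k(\kappa) = (\zeta_k(\kappa)^s v(\kappa), \ldots, \zeta_k(\kappa) v(\kappa), v(\kappa))$ for a nonzero holomorphic $v \in \C^N$ near $\underline\kappa$. Define
$$f_j^n := \chi(\varepsilon j) \, \underline\kappa^j \, \underline z^n \, v(\underline\kappa), \qquad n = 0, \ldots, s, \quad j \in \Z.$$
A Riemann-sum estimate yields $\sum_{n=0}^s \sum_j |f_j^n|^2 \le C\varepsilon^{-1}$ for small $\varepsilon$. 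Under the spatial Fourier transform, each $\hat f^n(\eta)$ is concentrated at scale $\varepsilon$ around $\underline\eta$, and the augmented initial vector $(\hat f^s, \ldots, \hat f^0)(\eta)$ is proportional to $e_k(\underline\kappa)$, which differs from the fiberwise eigenvector $e_k(e^{i\eta})$ by $O(\eta - \underline\eta)$.

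\textbf{Analysis of $V_0^n$.} Using the holomorphic spectral projector $\Pi_k(\kappa)$ associated with $\zeta_k$ (well defined near $\underline\kappa$ by Assumption~\ref{assumption2}), I split $V_j^n = W_j^n + R_j^n$ into a principal part evolving purely along the $\zeta_k$-mode and a remainder. The principal part, evaluated at $j=0$ via inverse Fourier and the change of variables $\eta = \underline\eta + \varepsilon \xi$, reads
$$W_0^n \approx \frac{\underline z^n v(\underline\kappa)}{2\pi} \int \exp(n \, \Lambda(\varepsilon \xi)) \, \hat\chi(\xi) \, d\xi, \qquad \Lambda(\mu) := \log \frac{\zeta_k(\underline\kappa e^{i\mu})}{\underline z},$$
where $\Lambda(\mu) = O(\mu^2)$ (since $\zeta_k'(\underline\kappa) = 0$) and $\text{\rm Re}\,\Lambda \le 0$ (von Neumann, from Assumption~\ref{assumption1}). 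For $n \le c_0 \varepsilon^{-2}$ with $c_0$ small, $n \, \text{\rm Re} \, \Lambda(\varepsilon \xi)$ is uniformly bounded on $\text{\rm supp}\,\hat\chi$, so the integral stays close to $\int \hat\chi = 2\pi\chi(0) \neq 0$. Hence $|W_0^n|^2 \ge c > 0$ throughout this range, and $\sum_{n \ge 0} |W_0^n|^2 \gtrsim \varepsilon^{-2}$.

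\textbf{Main obstacle.} The delicate step is to show that the remainder satisfies $\sum_n |R_0^n|^2 = O(\varepsilon^{-1})$, so it cannot cancel or dominate the $\Omega(\varepsilon^{-2})$ leading term. The remainder stems from the first-order mismatch $e_k(\underline\kappa) - e_k(e^{i\eta}) = O(\eta - \underline\eta)$ in the Fourier representation of the initial data, which at Fourier scale $\varepsilon$ produces an $O(\varepsilon)$ gain; combined with Assumption~\ref{assumption1} (which bounds $\mathcal{A}(e^{i\eta})^n$ uniformly in $n$) and Plancherel's identity as in the derivation of \eqref{estim1}, this should yield the desired $O(\varepsilon^{-1})$ bound. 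Possible higher multiplicities of $\underline z$ as an eigenvalue of $\mathcal{A}(\underline\kappa)$ only add further non-stationary branches that can be absorbed by the same argument. Once the remainder is controlled, the ratio between the two sides of \eqref{estimtrace} grows like $\varepsilon^{-1}$, contradicting the existence of a uniform constant $C$ in \eqref{estimtrace} and thereby proving Proposition~\ref{propA}.
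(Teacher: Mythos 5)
Your overall strategy is the same as the paper's: argue by contrapositive, build a wave packet concentrated at a glancing pair $(\underline\kappa,\underline z)$ with zero group velocity, polarize the data along the corresponding eigenvector, and show that the time-trace at $j=0$ over a window of length comparable to the inverse square of the envelope scale outruns the $\ell^2$ norm of the data, contradicting \eqref{estimtrace}. The problem is that the step you yourself label the ``main obstacle'' --- the bound $\sum_n |R_0^n|^2 = O(\varepsilon^{-1})$ for the remainder --- is precisely where the proof lives, and you only assert that Assumption \ref{assumption1} and Plancherel ``should'' give it. In the paper this is the bulk of the argument: the error is split into the three contributions \eqref{defepsilon1}, \eqref{defepsilon2}, \eqref{defepsilondiese} (phase error, projector error, and the non-polarized spectral block), each is estimated in $L^2$ using the compact support of the Fourier transform of the profile, the polarization condition, and the power-boundedness of Assumption \ref{assumption1}, and only then is the trace controlled. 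To close your gap one must at least say how a spacewise $\ell^2$ bound becomes a bound on the trace at $j=0$ summed in $n$: the mismatch $e_k(e^{i\eta})-e_k(\underline\kappa)=O(\eta-\underline\eta)$ gives $\sum_j|R_j^n|^2=O(\varepsilon)$ uniformly in $n$ (by Assumption \ref{assumption1}), and then the crude inequality $|R_0^n|^2\le\sum_j|R_j^n|^2$, summed only over the window $n\le c_0\varepsilon^{-2}$, yields $O(c_0\,\varepsilon^{-1})$; note that the bound you state cannot hold for the sum over all $n\ge 0$, so restricting to the window is not optional. With this spelled out the scheme does work, but as written it is an acknowledged hole rather than a proof.

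There are also technical slips that would break the argument as literally stated. You take $\chi\in{\mathcal C}^\infty_c(\R)$ and then argue ``on $\mathrm{supp}\,\widehat\chi$'': a nonzero compactly supported $\chi$ never has compactly supported $\widehat\chi$, and outside a $\delta_0$-neighborhood of $\underline\eta$ the branch $\zeta_k$, the function $\Lambda$ and the projector $\Pi_k$ are not even defined; the paper avoids this by taking the \emph{Fourier transform} of the profile compactly supported (band-limited profile), which is what makes the microlocal spectral decomposition usable, and you should do the same (or control Schwartz tails separately). Second, the nondegeneracy you need is $\chi(0)\neq 0$ (i.e.\ $\int\widehat\chi\neq 0$), not $\int\chi\neq 0$ as you assume at the outset. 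Third, ``$n\,\mathrm{Re}\,\Lambda(\varepsilon\xi)$ uniformly bounded'' does not imply the integral stays close to $2\pi\chi(0)$; you need $n|\Lambda(\varepsilon\xi)|$ \emph{small}, which is exactly what choosing $c_0$ small buys since $\Lambda(\mu)=O(\mu^2)$. Finally, a small remark on comparison: the paper works in the scaling $\Delta t=\Delta x\to 0$ with a fixed band-limited profile, proves an $L^\infty$ error bound \eqref{error} on time windows $[0,T]$, and concludes from $(N_T+1)\,\Delta t\,|a(0)|^2\le C+C\,\Delta x\,(1+T^2)\,T$ by letting $\Delta t\to 0$ and then $T\to\infty$; your fixed-mesh, $\varepsilon$-envelope formulation is equivalent (the trace estimate is indeed mesh-invariant) and slightly more quantitative, but it does not spare you any of the error analysis above.
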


The proof of Proposition \ref{propA} is based on high frequency asymptotics for solutions to \eqref{cauchys}. 
We first state independently a Lemma which gives the expression of the Fourier transform of a piecewise 
constant "highly oscillating" function\footnote{Of course the maximal frequency that is compatible with the 
mesh is $2\, \pi/\Delta x$ so high frequency in our discrete setting means a frequency of order $1/\Delta x$.}.

\begin{lemma}
\label{lemA}
Let $a$ denote a Schwartz function from $\R$ to $\C^q$ for some $q \in \N$. Given $\underline{\xi} \in \R$ 
and $\Delta x>0$, we consider the step function
\begin{equation*}
\forall \, j \in \Z \, ,\quad \forall \, x \in [j\, \Delta x,(j+1) \, \Delta x[ \, ,\quad 
a_\Delta (x) := {\rm e}^{i\, j \, \underline{\xi}} \, \, a(j\, \Delta x) \, .
\end{equation*}
Then $a_\Delta \in L^1(\R) \cap L^2(\R)$ and its Fourier transform is given by
\begin{equation*}
\forall \, \xi \in \R \, ,\quad \widehat{a_\Delta}(\xi)=\dfrac{1-{\rm e}^{-i\, \Delta x \, \xi}}{i\, \Delta x \, \xi} \, 
\sum_{m \in \Z} \widehat{a} \left( \xi -\dfrac{\underline{\xi} +2\, m\, \pi}{\Delta x} \right) \, .
\end{equation*}
\end{lemma}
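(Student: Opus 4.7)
The plan is a direct computation from the definition, followed by an application of the Poisson summation formula.

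First, I would handle the integrability. Since $a$ is Schwartz, the sequence $(a(j\, \Delta x))_{j \in \Z}$ decays faster than any polynomial in $|j|$, so for every $p \ge 1$ we have $\sum_{j \in \Z} \Delta x \, |a(j\, \Delta x)|^p <+\infty$. Because $a_\Delta$ is constant of modulus $|a(j\, \Delta x)|$ on each mesh cell $[j\, \Delta x,(j+1)\, \Delta x[$, this immediately yields $a_\Delta \in L^1(\R) \cap L^2(\R)$, together with the absolute convergence that will justify all interchanges of sums and integrals below.

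Next, I would expand the Fourier integral mesh cell by mesh cell. On each cell, the factor ${\rm e}^{i\, j\, \underline{\xi}} \, a(j\, \Delta x)$ comes out of the integral, and an elementary computation gives
\begin{equation*}
\int_{j\, \Delta x}^{(j+1)\, \Delta x} {\rm e}^{-i\, x\, \xi} \, {\rm d}x
= {\rm e}^{-i\, j\, \Delta x\, \xi} \, \dfrac{1-{\rm e}^{-i\, \Delta x \, \xi}}{i\, \xi} \, ,
\end{equation*}
understood by continuity at $\xi=0$ and at the zeros of $1-{\rm e}^{-i\, \Delta x \, \xi}$. Summing in $j$ produces
\begin{equation*}
\widehat{a_\Delta}(\xi) = \dfrac{1-{\rm e}^{-i\, \Delta x \, \xi}}{i\, \Delta x \, \xi} \,
\Delta x \, \sum_{j \in \Z} a(j\, \Delta x) \, {\rm e}^{i\, j \, (\underline{\xi}-\Delta x\, \xi)} \, .
\end{equation*}

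The last step is to apply Poisson summation to the remaining series, which is the heart of the computation. I would introduce the auxiliary Schwartz function $\phi(y) := a(y) \, {\rm e}^{i\, y\, (\underline{\xi}-\Delta x \, \xi)/\Delta x}$, whose samples satisfy $\phi(j\, \Delta x) = a(j\, \Delta x) \, {\rm e}^{i\, j\, (\underline{\xi}-\Delta x \, \xi)}$ and whose Fourier transform reads $\widehat{\phi}(\zeta) = \widehat{a}\big( \zeta + \xi - \underline{\xi}/\Delta x \big)$ (modulation becomes translation on the Fourier side). The usual Poisson summation formula then gives
\begin{equation*}
\Delta x \, \sum_{j \in \Z} \phi(j\, \Delta x)
= \sum_{m \in \Z} \widehat{\phi} \Big( \dfrac{2\, m\, \pi}{\Delta x} \Big)
= \sum_{m \in \Z} \widehat{a} \Big( \xi -\dfrac{\underline{\xi} - 2\, m\, \pi}{\Delta x} \Big) \, ,
\end{equation*}
and re-indexing $m \mapsto -m$ gives precisely the announced formula.

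There is no real obstacle: the only points requiring care are the Fourier convention ($\widehat{f}(\xi) = \int f(x) \, {\rm e}^{-i\, x\, \xi} \, {\rm d}x$, read off from the $(1-{\rm e}^{-i\, \Delta x \, \xi})/(i\, \Delta x \, \xi)$ factor in the statement), the sign/shift bookkeeping introduced by the high-frequency phase ${\rm e}^{i\, j\, \underline{\xi}}$, and the validity of Poisson summation, which is classical since $\phi$ is Schwartz.
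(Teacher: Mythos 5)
Your proof is correct and follows essentially the same route as the paper: expand the Fourier integral cell by cell to pull out the factor $(1-{\rm e}^{-i\,\Delta x\,\xi})/(i\,\xi)$, then apply the Poisson summation formula to the remaining modulated sample sum. Your extra bookkeeping (the auxiliary Schwartz function $\phi$, the integrability remark, and the re-indexing $m\mapsto -m$) is accurate and simply makes explicit what the paper leaves to the reader.
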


\noindent Observe that the function $a_\Delta$ in Lemma \ref{lemA} is a piecewise constant version of the 
"continuous" function
\begin{equation*}
x \in \R \longmapsto {\rm e}^{i \, x \, \underline{\xi}/\Delta x} \, a(x) \, ,
\end{equation*}
which represents a fast oscillation at frequency $\underline{\xi}/\Delta x$ ($\Delta x$ is meant to be small 
while $\underline{\xi}$ is fixed), with a slowly varying smooth envelope $a$.

\begin{proof}[Proof of Lemma \ref{lemA}]
Due to the fast decay of $a$ at infinity, the Fourier transform of $a_\Delta$ is given by
\begin{equation*}
\widehat{a_\Delta}(\xi)= \sum_{j \in \Z} {\rm e}^{i\, j \, \underline{\xi}} \, a(j\, \Delta x) \, 
\int_{j\, \Delta x}^{(j+1) \, \Delta x} {\rm e}^{-i\, x \, \xi} \, {\rm d}x 
=\dfrac{1-{\rm e}^{-i\, \Delta x \, \xi}}{i\, \xi} \sum_{j \in \Z} {\rm e}^{-i\, j \, \Delta x \, (\xi-\underline{\xi}/\Delta x)} 
\, a(j\, \Delta x) \, ,
\end{equation*}
and it only remains to apply the so-called Poisson summation formula to obtain the result of Lemma \ref{lemA}.
\end{proof}

\begin{proof}[Proof of Proposition \ref{propA}]
Let us first rewrite \eqref{cauchys} as a scheme with two time levels for an augmented vector. Namely, we 
introduce $W_j^n :=(V_j^{n+s},\dots,V_j^n) \in \C^{N\, (s+1)}$, and rewrite \eqref{cauchys} as
\begin{equation}
\label{cauchys'}
W_j^{n+1} = {\mathcal Q} \, W_j^n \, ,\quad j\in \Z \, ,\quad n\ge 0 \, ,
\end{equation}
with the operator
\begin{equation*}
{\mathcal Q} :=\begin{pmatrix}
Q_0 & \dots & \dots & Q_s \\
I & 0 & \dots & 0 \\
0 & \ddots & \ddots & \vdots \\
0 & 0 & I & 0 \end{pmatrix} \, .
\end{equation*}
The estimate \eqref{estimtrace} can be equivalently rewritten for solutions to \eqref{cauchys'} as
\begin{equation}
\label{estimtrace'}
\sum_{n \ge 0} \Delta t \, |W_0^n|^2 \le C \, \sum_{j \in \Z} \Delta x \, |W_j^0|^2 \, .
\end{equation}

Let us consider some fixed parameter $\underline{\xi} \in [0,2\, \pi[$, a Schwartz function $a$ from $\R$ to 
$\C^{N\, (s+1)}$ and the initial sequence for \eqref{cauchys'}:
\begin{equation*}
\forall \, j \in \Z \, ,\quad W_j^0 :={\rm e}^{i\, j \, \underline{\xi}} \, \, a(j\, \Delta x) \, .
\end{equation*}
For all $n \in \N$, the step function corresponding to the sequence $(W_j^n)_{j \in \Z}$ is denoted 
$W_\Delta^n$. Applying the Fourier transform to \eqref{cauchys'} and using Lemma \ref{lemA}, we have
\begin{equation*}
\widehat{W_\Delta^n} (\xi) =\dfrac{1-{\rm e}^{-i\, \Delta x \, \xi}}{i\, \Delta x \, \xi} \, \sum_{m \in \Z} 
{\mathcal A} \big( {\rm e}^{i\, \Delta x \, \xi} \big)^n \, 
\widehat{a} \left( \xi -\dfrac{\underline{\xi} +2\, m\, \pi}{\Delta x} \right) \, .
\end{equation*}

We now use Assumptions \ref{assumption1} and \ref{assumption2} in order to give a detailed expansion of 
the amplification matrix ${\mathcal A}$ close to $\exp(i\, \underline{\xi})$: there exists an integer $P$ such 
that, on the disk $\{ \eta \in \C \, / \, |\eta -\underline{\xi}|<\delta_0 \}$, ${\mathcal A}$ admits the spectral 
decomposition
\begin{equation*}
{\mathcal A}({\rm e}^{i\, \eta}) =\sum_{p=1}^P {\rm e}^{i\, \omega_p(\eta)} \, \Pi_p(\eta) 
+{\mathcal A}_\sharp (\eta) \, \Pi_\sharp(\eta) \, ,
\end{equation*}
with (scalar) functions $\omega_1,\dots,\omega_P$, rank one projectors $\Pi_1,\dots,\Pi_P$, a rank $N\, (s+1)-P$ 
projector $\Pi_\sharp$ and a square matrix ${\mathcal A}_\sharp$ that has spectral radius less than $1$ for 
all $\eta$. In the latter decomposition, all functions depend holomorphically on $\eta$. The functions $\omega_1, 
\dots,\omega_P$ satisfy
\begin{equation*}
\forall \, p=1,\dots,P\, ,\quad \omega_p (\underline{\xi}) \in \R \, ,
\end{equation*}
so the $\exp (i\, \omega_p(\eta))$ correspond to the eigenvalues of the amplification matrix that are close to the unit 
circle as $\exp (i\, \eta)$ is close to $\exp (i\, \underline{\xi})$. Of course, we can extend all functions to the disks 
$\{ \eta \in \C \, / \, |\eta -(\underline{\xi}+2\, m \, \pi)|<\delta_0 \}$, $m \in \Z$, by $2\, \pi$-periodicity because 
${\mathcal A}(\exp (i\, \cdot))$ is $2\, \pi$-periodic. The latter spectral decomposition of ${\mathcal A}$ only 
holds "microlocally", that is, locally near $\underline{\xi} +2\, \pi \, \Z$. To avoid technicalities, we assume that 
$a$ satisfies
\begin{equation*}
a \in {\mathcal C}^\infty_0(\R) \, ,\quad \text{\rm Supp } \widehat{a} \subset [-\delta_0/2,\delta_0/2] \, .
\end{equation*}
In this way, the expression of $\widehat{W_\Delta^n}$ splits into
\begin{multline}
\label{fourierWn}
\widehat{W_\Delta^n} (\xi) =\dfrac{1-{\rm e}^{-i\, \Delta x \, \xi}}{i\, \Delta x \, \xi} \, \sum_{m \in \Z} \, 
\sum_{p=1}^P {\rm e}^{i\, n \, \omega_p(\xi \, \Delta x)} \, 
\Pi_p(\xi \, \Delta x) \, \widehat{a} \left( \xi -\dfrac{\underline{\xi} +2\, m\, \pi}{\Delta x} \right) \\
+\dfrac{1-{\rm e}^{-i\, \Delta x \, \xi}}{i\, \Delta x \, \xi} \, \sum_{m \in \Z} {\mathcal A}_\sharp (\xi \, \Delta x)^n \, 
\Pi_\sharp(\xi \, \Delta x) \, \widehat{a} \left( \xi -\dfrac{\underline{\xi} +2\, m\, \pi}{\Delta x} \right) \, .
\end{multline}

Following \cite{trefethen1,trefethen3}, we define the group velocities ${\bf v}_p :=-\omega_p'(\underline{\xi})/\lambda$, 
which by Assumption \ref{assumption1}, are known to be real (see, for instance, \cite[Lemma 3.2]{trefethen3}). In the 
notation of Assumption \ref{assumption2}, the group velocity is equivalently given by ${\bf v}_p=-\underline{\kappa} \, 
\zeta_p'(\underline{\kappa})/(\lambda \, \underline{z})$. In particular, Assumption \ref{assumption3} is valid provided 
that the scheme does not admit any wave packet with a vanishing group velocity. We introduce the "approximate" 
solution to \eqref{cauchys'} by defining:
\begin{equation*}
\forall \, (j,n) \in \Z \times \N \, ,\quad {\mathcal W}_j^n :=\sum_{p=1}^P 
{\rm e}^{i\, (n\, \omega_p(\underline{\xi})+j \, \underline{\xi})} \, \Pi_p(\underline{\xi}) \, 
a(j\, \Delta x -n \, \Delta t \, {\bf v}_p) \, ,
\end{equation*}
which represents a sum of highly oscillating signals with phase velocity $-\omega_p(\underline{\xi})/(\lambda \, 
\underline{\xi})$, and corresponding smooth envelopes that propagate at the group velocity ${\bf v}_p$. According 
to Lemma \ref{lemA}, the Fourier transform of the corresponding piecewise constant function is given by
\begin{equation}
\label{fouriercalWn}
\widehat{{\mathcal W}_\Delta^n}(\xi) =\dfrac{1-{\rm e}^{-i\, \Delta x \, \xi}}{i\, \Delta x \, \xi} \, \sum_{m \in \Z} 
\, \sum_{p=1}^P {\rm e}^{i\, n\, \omega_p(\underline{\xi}) +i \, n \, \Delta x \, \omega_p'(\underline{\xi})\, 
(\xi -(\underline{\xi} +2\, m\, \pi)/\Delta x)} \, \Pi_p(\underline{\xi}) \, 
\widehat{a} \left( \xi -\dfrac{\underline{\xi} +2\, m\, \pi}{\Delta x} \right) \, .
\end{equation}
We are now going to estimate the error $W_0^n -{\mathcal W}_0^n$.

Let us define the error:
\begin{equation*}
\forall \, (j,n) \in \Z \times \N \, ,\quad e_j^n :=W_j^n -{\mathcal W}_j^n \, .
\end{equation*}
The expressions \eqref{fourierWn} and \eqref{fouriercalWn} show that the Fourier transform $\widehat{e_\Delta^n}$ 
splits as:
\begin{equation*}
\widehat{e_\Delta^n} =\sum_{p=1}^P \varepsilon^n_{1,p}(\xi) +\varepsilon^n_{2,p}(\xi) 
+\varepsilon^n_\sharp(\xi) \, ,
\end{equation*}
with, for all $p=1,\dots,P$,
\begin{align}
\varepsilon^n_{1,p}(\xi) &:=\dfrac{1-{\rm e}^{-i\, \Delta x \, \xi}}{i\, \Delta x \, \xi} \, \sum_{m \in \Z} 
\left( {\rm e}^{i\, n \, \omega_p(\xi \, \Delta x)} -{\rm e}^{i\, n\, \omega_p(\underline{\xi}) 
+i \, n \, \omega_p'(\underline{\xi})\, (\xi\, \Delta x -\underline{\xi} -2\, m\, \pi)} \right) \, 
\Pi_p(\underline{\xi}) \, \widehat{a} \left( \xi -\dfrac{\underline{\xi} +2\, m\, \pi}{\Delta x} \right) \, ,\label{defepsilon1}\\
\varepsilon^n_{2,p}(\xi) &:=\dfrac{1-{\rm e}^{-i\, \Delta x \, \xi}}{i\, \Delta x \, \xi} \, \sum_{m \in \Z} 
{\rm e}^{i\, n \, \omega_p(\xi \, \Delta x)} \, \big( \Pi_p(\xi \, \Delta x) -\Pi_p(\underline{\xi} +2\, m\, \pi) \big) 
\, \widehat{a} \left( \xi -\dfrac{\underline{\xi} +2\, m\, \pi}{\Delta x} \right) \, ,\label{defepsilon2}
\end{align}
and
\begin{equation}
\label{defepsilondiese}
\varepsilon^n_\sharp(\xi) :=\dfrac{1-{\rm e}^{-i\, \Delta x \, \xi}}{i\, \Delta x \, \xi} \, \sum_{m \in \Z} 
{\mathcal A}_\sharp (\xi \, \Delta x)^n \, \Pi_\sharp(\xi \, \Delta x) \, 
\widehat{a} \left( \xi -\dfrac{\underline{\xi} +2\, m\, \pi}{\Delta x} \right) \, .
\end{equation}

Let us first estimate the $L^2$ norm of $\varepsilon^n_{1,p}$ in \eqref{defepsilon1}. We fix a time $T>0$, 
and consider integers $n$ such that $n \, \Delta t \le T$. Since $\omega_p(\underline{\xi})$ and $\omega_p' 
(\underline{\xi})$ are real, there holds
\begin{multline*}
|\varepsilon^n_{1,p}(\xi)| \le C\, \dfrac{|1-{\rm e}^{-i\, \Delta x \, \xi}|}{\Delta x \, |\xi|} \, \sum_{m \in \Z} 
\Big| {\rm e}^{i\, n \, \omega_p(\xi \, \Delta x)-i\, n\, \omega_p(\underline{\xi}+2\, m \, \pi) 
-i \, n \, \omega_p'(\underline{\xi}+2\, m \, \pi)\, (\xi\, \Delta x -\underline{\xi} -2\, m\, \pi)} -1 \Big| \\
\left| \widehat{a} \left( \xi -\dfrac{\underline{\xi} +2\, m\, \pi}{\Delta x} \right) \right| \, .
\end{multline*}
There is no loss of generality in assuming $\delta_0/\lambda <\pi$. Then the support property of $\widehat{a}$ 
shows that in the latter sum with respect to $m \in \Z$, at most one term is nonzero. Consequently, there holds
\begin{multline*}
|\varepsilon^n_{1,p}(\xi)|^2 \le C \, \dfrac{|1-{\rm e}^{-i\, \Delta x \, \xi}|^2}{\Delta x^2 \, \xi^2} \, 
\sum_{m \in \Z} \Big| {\rm e}^{i\, n \, \omega_p(\xi \, \Delta x)-i\, n\, \omega_p(\underline{\xi}+2\, m \, \pi) 
-i \, n \, \omega_p'(\underline{\xi}+2\, m \, \pi)\, (\xi\, \Delta x -\underline{\xi} -2\, m\, \pi)} -1 \Big|^2 \\
\left| \widehat{a} \left( \xi -\dfrac{\underline{\xi} +2\, m\, \pi}{\Delta x} \right) \right|^2 \, ,
\end{multline*}
and because of the limitation $n \, \Delta t \le T$, there holds\footnote{Here we use Assumption \ref{assumption1} 
to obtain that the imaginary part of $\omega_p(\xi \, \Delta x)$ is nonpositive.}
\begin{equation*}
\Big| {\rm e}^{i\, n \, \omega_p(\xi \, \Delta x)-i\, n\, \omega_p(\underline{\xi}+2\, m \, \pi) 
-i \, n \, \omega_p'(\underline{\xi}+2\, m \, \pi)\, (\xi\, \Delta x -\underline{\xi} -2\, m\, \pi)} -1 \Big| 
\le C\, T \, \Delta x \, \left( \xi -\dfrac{\underline{\xi} +2\, m\, \pi}{\Delta x} \right)^2 
\le C\, T \, \Delta x \, ,
\end{equation*}
on the support of $\widehat{a} (\xi-(\underline{\xi} +2\, m\, \pi)/\Delta x)$.  We thus derive the bound
\begin{align*}
\int_\R |\varepsilon^n_{1,p}(\xi)|^2 \, {\rm d}\xi &\le C \, T^2 \, \Delta x^2 \, \sum_{m \in \Z} \int_\R 
\dfrac{|1-{\rm e}^{-i\, \Delta x \, \xi}|^2}{\Delta x^2 \, \xi^2} \, 
\left| \widehat{a} \left( \xi -\dfrac{\underline{\xi} +2\, m\, \pi}{\Delta x} \right) \right|^2 \, {\rm d}\xi \\
&\le C \, \| \widehat{a} \|_{L^\infty}^2 \, T^2 \, \Delta x^2 \, \sum_{m \in \Z} 
\int_{(\underline{\xi} +2\, m\, \pi)/\Delta x-\delta_0/2}^{(\underline{\xi} +2\, m\, \pi)/\Delta x+\delta_0/2} 
\dfrac{|1-{\rm e}^{-i\, \Delta x \, \xi}|^2}{\Delta x^2 \, \xi^2} \, {\rm d}\xi \\
&\le C \, T^2 \, \Delta x \, \sum_{m \in \Z} 
\int_{\underline{\xi} +2\, m\, \pi-\delta_0 \, \Delta x/2}^{\underline{\xi} +2\, m\, \pi +\delta_0 \, \Delta x/2} 
\dfrac{|1-{\rm e}^{-i\, \eta}|^2}{\eta^2} \, {\rm d}\eta \\
&\le C \, T^2 \, \Delta x \, \sum_{m \in \Z} 
\int_{\underline{\xi} +2\, m\, \pi-\delta_0 \, \Delta x/2}^{\underline{\xi} +2\, m\, \pi +\delta_0 \, \Delta x/2} 
\dfrac{1}{1+\eta^2} \, {\rm d}\eta \le C \, T^2 \, \Delta x^2 \, ,
\end{align*}
with a constant $C>0$ that is uniform with respect to $T>0$ and $\Delta t \in \, ]0,1]$. (Recall that the ratio 
$\Delta t/\Delta x$ is kept fixed.) Similarly, the error $\varepsilon^n_{2,p}$ in \eqref{defepsilon2} 
satisfies\footnote{Here we use again Assumption \ref{assumption1} in order to have 
$|\exp (i\, n \, \omega_p(\xi \, \Delta x))| \le 1$ uniformly in $n$.}
\begin{equation*}
\int_\R |\varepsilon^n_{2,p}(\xi)|^2 \, {\rm d}\xi \le C\, \Delta x^2 \, ,
\end{equation*}
with a constant $C>0$ that is uniform with respect to $T>0$ and $\Delta t \in \, ]0,1]$.

If ${\mathcal W}_j^n$ is meant to be a good approximation of $W_j^n$, including for small values of $n$, 
then the term $\varepsilon^n_\sharp$ in \eqref{defepsilondiese} is meant to be small. In order to achieve 
this, we assume that $a$ satisfies the polarization condition
\begin{equation*}
\Pi_\sharp (\underline{\xi}) \, a =0 \, .
\end{equation*}
Let us now derive an $L^2$ bound for $\varepsilon^n_\sharp$. Shrinking $\delta_0$ is necessary, there 
is no loss of generality in assuming that the matrix ${\mathcal A}_\sharp$ in the spectral decomposition 
of ${\mathcal A}$ is power bounded:
\begin{equation*}
\sup_{n \in \N} \, |{\mathcal A}_\sharp(\eta)^n| \le C \, ,
\end{equation*}
with a constant $C>0$ that is uniform with respect to $\eta$ as long as $|\eta -(\underline{\xi}+2\, m \, \pi)| 
\le \delta_0/2$. (We shall not even use here the exponential decay in time of the $\sharp$ component.) 
Performing the same kind of analysis as for the terms $\varepsilon_{2,p}^n$, the error $\varepsilon^n_\sharp$ 
in \eqref{defepsilondiese} satisfies
\begin{equation*}
\int_\R |\varepsilon^n_\sharp(\xi)|^2 \, {\rm d}\xi \le C\, \Delta x^2 \, ,
\end{equation*}
with a constant $C>0$ that is uniform with respect to $T>0$ and $\Delta t \in \, ]0,1]$.

By Plancherel Theorem, we have proved the bound
\begin{equation*}
\sum_{j \in \Z} \Delta x \, |e_j^n|^2 \le C\, \Delta x^2 \, (1+T^2) \, ,
\end{equation*}
for all $n$ such that $n \, \Delta t \le T$ and a constant $C$ that is uniform with respect to $T>0$ and 
$\Delta x \in \, ]0,1]$. In particular, there holds
\begin{equation}
\label{error}
\| W^n_\Delta -{\mathcal W}^n_\Delta \|_{L^\infty(\R)}^2 
=\sup_{j \in \Z} |W_j^n -{\mathcal W}_j^n|^2 \le C\, \Delta x \, (1+T^2) \, ,
\end{equation}
which gives an $L^\infty$ bound for the error between the exact and approximate solutions provided that 
$\widehat{a}$ has sufficiently narrow support, and $a$ is suitably polarized ($\Pi_\sharp (\underline{\xi}) \, a =0$).

The proof of Proposition \ref{propA} is now almost complete. Indeed, let us assume that Assumption 
\ref{assumption3} is not valid. Up to reordering, this means that for some $\underline{\xi}$, the group 
velocity ${\bf v}_1$ is zero. We use the previous construction of high frequency solutions to \eqref{cauchys'}. 
Choosing $a$ such that the (more restrictive) polarization condition $\Pi_1(\underline{\xi}) \, a=a$ holds, 
the expression of the approximate solution ${\mathcal W}$ reduces to
\begin{equation*}
\forall \, (j,n) \in \Z \times \N \, ,\quad {\mathcal W}_j^n := 
{\rm e}^{i\, (n\, \omega_1(\underline{\xi})+j \, \underline{\xi})} \, a(j\, \Delta x) \, .
\end{equation*}
Let us consider some time $T>0$. The trace estimate \eqref{estimtrace'} gives
\begin{align*}
\sum_{0 \le n \le T/\Delta t} \Delta t \, |{\mathcal W}_0^n|^2 
&\le 2 \, \sum_{0 \le n \le T/\Delta t} \Delta t \, |W_0^n|^2 
+2\, \sum_{1 \le n \le T/\Delta t} \Delta t \, |{\mathcal W}_0^n-W_0^n|^2 \\
&\le C\, \sum_{j \in \Z} \Delta x \, |W_j^0|^2 +C \, \Delta x \, (1+T^2) \, T \, .
\end{align*}
By the smoothness of $a$, there holds
\begin{align*}
\sum_{j \in \Z} \Delta x \, |W_j^0|^2 =\sum_{j \in \Z} \Delta x \, |a(j\, \Delta x)|^2 
&\le C\, \sum_{j \in \Z} \| a \|_{L^2([j\, \Delta x,(j+1)\, \Delta x[)}^2 
+\Delta x^2 \, \| a' \|_{L^2(L^2([j\, \Delta x,(j+1)\, \Delta x[)}^2 \\
&\le C\, \| a \|_{H^1(\R)}^2 \, ,
\end{align*}
uniformly with respect to $\Delta t \in \, ]0,1]$. Summing up, we have shown that, for a suitable constant 
$C>0$ that is uniform with respect to $T>0$ and $\Delta t \in \, ]0,1]$, there holds
\begin{equation*}
(N_T+1) \, \Delta t \, |a(0)|^2 \le C +C\, \Delta x \, (1+T^2) \, T\, ,
\end{equation*}
with $N_T$ the largest integer such that $N_T \, \Delta t \le T$. By first passing to the limit $\Delta t \rightarrow 
0$, we get
\begin{equation*}
T\, |a(0)|^2 \le C \, ,
\end{equation*}
and by passing to the limit $T \rightarrow +\infty$, we get $a(0)=0$, which is obviously a contradiction 
because one can construct the function $a$ that meets all previous requirements (support of $\widehat{a}$, 
smoothness and polarization), together with $a(0) \neq 0$.
\end{proof}

\begin{remark}
The above argument is actually simpler in the PDE context because an accurate description of high frequency 
asymptotics (including $L^\infty$ error bounds) is available for hyperbolic systems, say with constant 
multiplicity. Consider for instance the Cauchy problem
\begin{equation*}
\partial_t u +\sum_{j=1}^d A_j \, \partial_{x_j} u =0 \, ,
\end{equation*}
with a hyperbolic operator of constant multiplicity, that is:
\begin{equation*}
\forall \, \xi=(\xi_1,\dots,\xi_d) \in \R^d \setminus \{ 0 \} \, ,\quad
\det \Big[ \tau \, I+\sum_{j=1}^d \xi_j \, A_j \Big] =\prod_{k=1}^q \big( \tau+\lambda_k(\xi) \big)^{\nu_k} \, ,
\end{equation*}
with (real valued) real analytic semi-simple eigenvalues $\lambda_1,\dots,\lambda_q$. The validity of the 
trace estimate
\begin{equation*}
\int_{\R^+} \int_{\R^{d-1}} |u(t,y,0)|^2 \, {\rm d}y \, {\rm d}t \le C \, \| u(0,\cdot) \|_{L^2(\R^d)}^2 \, ,
\end{equation*}
is equivalent to the fact that there is no glancing wave packet, namely:
\begin{equation*}
\forall \, \xi \neq 0 \, ,\quad \forall \, k=1,\dots,q \, ,\quad \dfrac{\partial \lambda_k(\xi)}{\partial \xi_d} \neq 0 \, .
\end{equation*}
The latter condition is basically never satisfied in dimension $d \ge 2$, and this is one reason why the 
derivation of semigroup estimates in \cite{kajitani,rauch} and followers is so involved.
\end{remark}

\paragraph{Acknowledgments} The discussion in Appendix \ref{appA} originates from a discussion in Les Houches 
with Guy M\'etivier, whom I warmly thank for pointing out several "well-known" results. I also thank Mark Williams 
for providing and clarifying parts of \cite{MT}. Eventually, it is a pleasure to thank Denis Serre, whose interest 
and encouragements over the years have been very stimulating.

\bibliographystyle{alpha}
\bibliography{Semigroupe}

\begin{thebibliography}{GKO95}

\bibitem[Aud11]{audiard}
C.~Audiard.
\newblock On mixed initial-boundary value problems for systems that are not
  strictly hyperbolic.
\newblock {\em Appl. Math. Lett.}, 24(5):757--761, 2011.

\bibitem[BGS07]{benzoni-serre}
S.~Benzoni-Gavage and D.~Serre.
\newblock {\em Multidimensional hyperbolic partial differential equations}.
\newblock Oxford University Press, 2007.
\newblock First-order systems and applications.

\bibitem[Car43]{carlson}
F.~Carlson.
\newblock Quelques in\'egalit\'es concernant les fonctions analytiques.
\newblock {\em Ark. Mat. Astr. Fys.}, 29B(11):6, 1943.

\bibitem[CG11]{jfcag}
J.-F. Coulombel and A.~Gloria.
\newblock Semigroup stability of finite difference schemes for multidimensional
  hyperbolic initial boundary value problems.
\newblock {\em Math. Comp.}, 80(273):165--203, 2011.

\bibitem[Cou09]{jfc1}
J.-F. Coulombel.
\newblock Stability of finite difference schemes for hyperbolic initial
  boundary value problems.
\newblock {\em SIAM J. Numer. Anal.}, 47(4):2844--2871, 2009.

\bibitem[Cou11]{jfc2}
J.-F. Coulombel.
\newblock Stability of finite difference schemes for hyperbolic initial
  boundary value problems {I}{I}.
\newblock {\em Ann. Sc. Norm. Super. Pisa Cl. Sci. (5)}, X(1):37--98, 2011.

\bibitem[Cou13]{jfcnotes}
J.-F. Coulombel.
\newblock Stability of finite difference schemes for hyperbolic initial
  boundary value problems.
\newblock In {\em HCDTE Lecture Notes. Part I. Nonlinear Hyperbolic PDEs,
  Dispersive and Transport Equations}, pages 97--225. American Institute of
  Mathematical Sciences, 2013.

\bibitem[GKO95]{gko}
B.~Gustafsson, H.-O. Kreiss, and J.~Oliger.
\newblock {\em Time dependent problems and difference methods}.
\newblock John Wiley \& Sons, 1995.

\bibitem[GKS72]{gks}
B.~Gustafsson, H.-O. Kreiss, and A.~Sundstr{\"o}m.
\newblock Stability theory of difference approximations for mixed initial
  boundary value problems. {II}.
\newblock {\em Math. Comp.}, 26(119):649--686, 1972.

\bibitem[Kaj72]{kajitani}
K.~Kajitani.
\newblock Initial-boundary value problems for first order hyperbolic systems.
\newblock {\em Publ. Res. Inst. Math. Sci.}, 7:181--204, 1971/72.

\bibitem[Kre68]{kreiss1}
H.-O. Kreiss.
\newblock Stability theory for difference approximations of mixed initial
  boundary value problems. {I}.
\newblock {\em Math. Comp.}, 22:703--714, 1968.

\bibitem[Kre70]{kreiss2}
H.-O. Kreiss.
\newblock Initial boundary value problems for hyperbolic systems.
\newblock {\em Comm. Pure Appl. Math.}, 23:277--298, 1970.

\bibitem[KW93]{kreiss-wu}
H.-O. Kreiss and L.~Wu.
\newblock On the stability definition of difference approximations for the
  initial-boundary value problem.
\newblock {\em Appl. Numer. Math.}, 12(1-3):213--227, 1993.

\bibitem[M{\'e}t14]{metivier2}
G.~M{\'e}tivier.
\newblock On the {${L}^2$} well posedness of hyperbolic initial boundary value
  problems.
\newblock {\em Preprint}, 2014.

\bibitem[MT]{MT}
R.~Melrose and M.~Taylor.
\newblock {\em Boundary problems for wave equations with grazing and gliding
  rays}.
\newblock Unpublished notes.

\bibitem[Osh69a]{osher2}
S.~Osher.
\newblock Stability of difference approximations of dissipative type for mixed
  initial boundary value problems. {I}.
\newblock {\em Math. Comp.}, 23:335--340, 1969.

\bibitem[Osh69b]{osher1}
S.~Osher.
\newblock Systems of difference equations with general homogeneous boundary
  conditions.
\newblock {\em Trans. Amer. Math. Soc.}, 137:177--201, 1969.

\bibitem[Rau72]{rauch}
J.~Rauch.
\newblock {${\mathcal L}^2$} is a continuable initial condition for {K}reiss'
  mixed problems.
\newblock {\em Comm. Pure Appl. Math.}, 25:265--285, 1972.

\bibitem[Sar65]{sarason1}
L.~Sarason.
\newblock On hyperbolic mixed problems.
\newblock {\em Arch. Rational Mech. Anal.}, 18:310--334, 1965.

\bibitem[Sar77]{sarason2}
L.~Sarason.
\newblock Hyperbolic and other symmetrizable systems in regions with corners
  and edges.
\newblock {\em Indiana Univ. Math. J.}, 26(1):1--39, 1977.

\bibitem[Str68]{strang}
G.~Strang.
\newblock On the construction and comparison of difference schemes.
\newblock {\em SIAM J. Numer. Anal.}, 5:506--517, 1968.

\bibitem[Tad86]{tadmor2}
E.~Tadmor.
\newblock Complex symmetric matrices with strongly stable iterates.
\newblock {\em Linear Algebra Appl.}, 78:65--77, 1986.

\bibitem[TE05]{TE}
L.~N. Trefethen and M.~Embree.
\newblock {\em Spectra and pseudospectra}.
\newblock Princeton University Press, 2005.
\newblock The behavior of nonnormal matrices and operators.

\bibitem[Tre82]{trefethen1}
L.~N. Trefethen.
\newblock Group velocity in finite difference schemes.
\newblock {\em SIAM Rev.}, 24(2):113--136, 1982.

\bibitem[Tre84]{trefethen3}
L.~N. Trefethen.
\newblock Instability of difference models for hyperbolic initial boundary
  value problems.
\newblock {\em Comm. Pure Appl. Math.}, 37:329--367, 1984.

\bibitem[Wu95]{wu}
L.~Wu.
\newblock The semigroup stability of the difference approximations for
  initial-boundary value problems.
\newblock {\em Math. Comp.}, 64(209):71--88, 1995.

\end{thebibliography}
\end{document}